\pgfplotsset{compat=1.15}
\def\M{\mathbf{M}}
\def\bA{\mathbf{A}}
\def\a{\mathbf{a}}
\def\e{\mathbf{e}}
\def\v{\mathbf{v}}
\def\x{\mathbf{x}}
\def\y{\mathbf{y}}
\def\z{\mathbf{z}}
\def\R{{\mathbb R}}
\def\N{{\mathbb N}}
\DeclareMathOperator{\diag}{diag}
\DeclareMathOperator{\supp}{supp}
\newcounter{algsubstate}
\renewcommand{\thealgsubstate}{\alph{algsubstate}}
\newenvironment{algsubstates}
  {\setcounter{algsubstate}{0}%
   \renewcommand{\State}{%
     \stepcounter{algsubstate}%
     \Statex {\footnotesize\thealgsubstate:}\space}}
  {}
\newtheorem{theorem}{\bf Theorem}
\newtheorem{lemma}{\bf Lemma}
\newtheorem{example}{\bf Example}
\newtheorem{proposition}{\bf Proposition}
\newtheorem{corollary}{\bf Corollary}
\newtheorem{remark}{\bf Remark}
\newtheorem{assumption}{\bf Assumption}
\newcommand{\revise}[1]{\textcolor{black}{#1}}
\definecolor{qqzzff}{rgb}{0,0.6,1}
\definecolor{ududff}{rgb}{0.30196078431372547,0.30196078431372547,1}
\definecolor{xdxdff}{rgb}{0.49019607843137253,0.49019607843137253,1}
\definecolor{ffzzqq}{rgb}{1,0.6,0}
\definecolor{qqzzqq}{rgb}{0,0.6,0}
\definecolor{ffqqqq}{rgb}{1,0,0}
\definecolor{uuuuuu}{rgb}{0.26666666666666666,0.26666666666666666,0.26666666666666666}
\def\algbackskip{\hskip-\ALG@thistlm}
\definecolor{darkblue}{rgb}{0.0, 0.0, 0.55}
\definecolor{bordeaux}{rgb}{0.34, 0.01, 0.1}
\newcommand{\footremember}[2]{%
    \footnote{#2}
    \newcounter{#1}
    \setcounter{#1}{\value{footnote}}%
}
\newcommand{\footrecall}[1]{%
    \footnotemark[\value{#1}]%
}
\numberwithin{equation}{section}
\def\R{{\mathbb{R}}}
\def\N{{\mathbb{N}}}
\def\x{{\mathbf{x}}}
\def\c{{\mathbf{c}}}
\def\z{{\mathbf{z}}}
\def\e{{\mathbf{e}}}
\def\y{{\mathbf{y}}}
\def\a{{\boldsymbol{\alpha}}}
\def\b{{\boldsymbol{\beta}}}
\def\g{{\boldsymbol{\gamma}}}
\def\supp{\hbox{\rm{supp}}}
\def\cA{{\cal A}}
\DeclareMathOperator{\st}{s.t.}
\begin{document}

\providecommand{\keywords}[1]
{
  \textbf{\textbf{Keywords:}} #1
}

\title{Tractable hierarchies of convex relaxations for polynomial optimization on the nonnegative orthant}

\author{%
Ngoc Hoang Anh Mai\footremember{1}{Institute of Mathematics, Vietnam Academy of Science and Technology, 18 Hoang Quoc Viet, Cau Giay, Ha Noi, Viet Nam.}, %
  Victor Magron\footremember{2}{CNRS; LAAS; 7 avenue du Colonel Roche, F-31400 Toulouse; France.}\footremember{3}{Universit\'e de Toulouse; LAAS; F-31400 Toulouse, France.}, \\%
  Jean-Bernard Lasserre\footrecall{2} \footrecall{3} , %
  Kim-Chuan Toh\footremember{4}{Department of Mathematics, National University of Singapore; 10 Lower Kent Ridge Road, Singapore 119076.}
  }
\maketitle

\begin{abstract}
We consider {polynomial optimization problems (POP)} on a semialgebraic set contained in the nonnegative orthant {(every POP on a compact set can be put in this format by a simple translation of the origin).
Such a POP} can be converted to an equivalent POP by squaring each variable.
Using even symmetry and the concept of \emph{factor width}, we propose a  hierarchy of semidefinite  relaxations based on the extension of P\'olya's Positivstellensatz by Dickinson--Povh.  
As its distinguishing and crucial feature, the maximal matrix size of each resulting semidefinite relaxation can be chosen {\emph{arbitrarily} and in addition,
we prove that} the sequence of values returned by the new hierarchy converges to the optimal value of the original POP at the rate   $\mathcal{O}(\varepsilon^{-\mathfrak c})$ if the semialgebraic set has nonempty interior.
When applied to (i) robustness certification of multi-layer neural networks and (ii) computation of positive maximal singular values, our method based on P\'olya's Positivstellensatz provides better bounds and runs several hundred times faster than the standard Moment-SOS hierarchy.
\end{abstract}
\keywords{P\'olya's Positivstellensatz; basic semialgebraic set; sums of squares; polynomial optimization;  moment-SOS hierarchy; factor width}

\tableofcontents

\section{Introduction}

Polynomial optimization is concerned with computing the minimum value of a polynomial on a basic semialgebraic set.
A well-known methodology is to apply positivity certificates (representations of polynomials positive on basic semialgebraic sets) to design a hierarchy of convex relaxations to solve polynomial optimization problems (POPs).
Developed originally by Lasserre in \cite{lasserre2001global}, the hierarchy of semidefinite relaxations based on Putinar's Positivstellensatz is called the \emph{Moment-SOS hierarchy} \revise{and has been used in many applications arising} from optimization, operations research, signal processing, computational geometry, probability, statistics, control, PDEs, quantum information, and computer vision.
For more details, the interested reader is referred to, e.g., \cite{acopf,weaklyhard,sparsedynsys,schlosser2021converging,schlosser2020sparse,chen2021semialgebraic,chen2020semialgebraic,tacchi2021exploiting,oustry2019inner,tacchi2020approximating} and references therein.

\if{
The now popular methodology initially developed in \cite{lasserre2001global}
allows us to obtain a converging sequence of lower bounds by solving a hierarchy of convex relaxations of the  original polynomial optimization problem. 
Each relaxation of the Moment-SOS hierarchy is a semidefinite program (SDP),  solvable in time polynomial in its input size.
However, despite its theoretical efficiency (also observed in practice), in its canonical form this approach is facing a serious scalability issue mainly due to the increasing size of the resulting relaxations. 
Therefore its initial standard formulation is limited to problems of modest dimension unless some sparsity and/or symmetry can be exploited. 
}\fi
However, despite its theoretical efficiency (also observed in practice),  \revise{in its canonical form} the Moment-SOS hierarchy is facing a scalability issue mainly due to the increasing size of the resulting relaxations. 
Overcoming the scalability and efficiency issues has become a major scientific challenge in polynomial  optimization.
Many recent efforts
in this direction are mainly developed around the following ideas:
\begin{enumerate}
\item \emph{SDP-relaxations variants} with \emph{small} maximal matrix size solved efficiently by interior point methods.
This includes correlative sparsity \cite{waki2006sums,lasserre2006convergent}, term sparsity \cite{wang2021tssos,wang2021chordal,wang2022cs}, symmetry exploitation \cite{gatermann2004symmetry,riener2013exploiting}, Jordan  symmetry reduction \cite{brosch2022jordan}, sublevel relaxations \cite{chen2022sublevel}.
\item Exploit \emph{low-rank structures} of SDP-relaxations; see, e.g., \cite{weisser2018sparse,yang2023inexact}.
\item \emph{First-order methods} to solve  SDP-relaxations involving  matrix variables of potentially large size with constant trace  \cite{mai2023hierarchy,mai2022exploiting}.
\item 
Develop \emph{convex relaxations} that are based on alternatives to semidefinite cones. For example this includes linear programming  (LP)  \cite{lasserre2017bounded,ahmadi2019dsos}, second-order conic programming (SOCP) \cite{magron2023sonc,wang2020second,ahmadi2019dsos}, copositive programming  \cite{pena2015completely},
 non-symmetric  conic programming \cite{papp2019sum}, relative entropy programming \cite{dressler2017positivstellensatz,murray2021signomial}, geometric programming \cite{dressler2019approach}.
\end{enumerate}

Let $\R[\x]$ denote the set of real polynomials in vector of variables $\x=(x_1,\dots,x_n)$.
Given a real symmetric matrix $\mathbf M$, the notation $\mathbf M\succeq 0$ denotes that $\mathbf M$ is positive semidefinite, i.e., all its eigenvalue are nonnegative.
Given $r\in\N_{> 0}$, denote $[r]:=\{1,\dots,r\}$.

Sparsity exploitation is one of the notable methods to reduce the size of the Moment-SOS relaxations.
For POPs in the form
\begin{equation}\label{eq:POP}
    f^\star:=\min_{\x\in S(\mathfrak g)}f(\x)\,,
\end{equation}
where $f$ is a polynomial in $\R[\x]$ and $S(\mathfrak g)$ is the semialgebraic set associated with $\mathfrak g=\{g_1,\dots,g_m\}\subset \R[\x]$, i.e.,
\begin{equation}\label{eq:S(g)}
    S(\mathfrak g) :=\{\, \x\in\R^n:\: g_i(\x)\ge  0\,,\,i\in [m]\,\} \,,
\end{equation}
Waki et al. \cite{waki2006sums}
(resp. Wang et al. \cite{wang2021tssos}) have exploited correlative (resp. term) sparsity
to define appropriate sparse-variants of the associated standard SOS-relaxations.
Roughly speaking, in a given standard SOS-relaxation, they break each matrix variable 
into many blocks of smaller sizes and solve the new resulting SDP 
via an interior-point solver (e.g., {\tt Mosek}  \cite{aps2019mosek} or {\tt SDPT3}  \cite{tutuncu2003solving}). 
It is due to the fact that the most expensive part of interior-point methods for a standard SDP: 
\begin{equation*}\label{eq:standard.SDP}
\begin{array}{rl}
     \min\limits_{\z,\bA^{(t)}_j}&\c^\top \z\\
     \text{s.t.}&\z\in \R^w\,,\,\bA^{(t)}_j\in\R^{q\times q}\,,\\
     &\bA^{(t)}_0+\sum_{j=1}^wz_j\bA^{(t)}_j\succeq 0\,,\,t\in[u]\,,
\end{array}
\end{equation*}
is solving a square linear system in every iteration.
It has the complexity $O(u(wq^3+w^2q^2)) + O(w^3)$, 
which mainly depends on the matrix size $q$. 
Thus one can solve the above SDP  efficiently by using interior-point methods if $q,w$ are small, even when $u$ is large.
On one hand, correlative sparsity occurs for POP \eqref{eq:POP} being such that the objective polynomial has a decomposition $f=f_1+\dots+f_p$, where each polynomial 
$f_t$ involves only a small subset of variables $I_t\subset[n]$, and $f_t$ together with the constraint polynomials $(g_i)_{i\in J_t}$ (for some $J_t\subset[m]$) share the same variables.
On the other hand, term sparsity occurs for POP (1) where $f,g_1,\dots,g_m$ have a few nonzero terms.
To solve large-scale POPs, one can simultaneously exploit correlative sparsity and term sparsity as in \cite{wang2022cs}. 

Denote by $\|\cdot\|_2$ the l$_2$-norm of a vector in $\R^n$.
A polynomial $p\in\R[\x]$ is written as
$p=\sum_{\a\in\N^n} p_\a \x^\a$ with monomial $\x^\a:=x_1^{\alpha_1}\dots x_n^{\alpha_n}$ for some finite real sequence $(p_\a)_{\a\in\N^n}$.
Given $\a = (\alpha_1,\dots,\alpha_n) \in \N^n$, we denote $|\a| := \alpha_1 + \dots + \alpha_n$.
Define $\N^n_t:=\{\a\in\N^n\,:\,|\a|\le t\}$ for each $t\in\N$.
Given $\mathbf{u}\in\R^{r}$, $\diag(\mathbf{u})$ stands for the diagonal matrix of size $r$ with diagonal entries given by $\mathbf{u}$. Denote $\R_+:=[0,\infty)$.
Let $(\x^\a)_{\a\in\N^n}$ 
be the canonical basis of monomials for  $\R[\x]$ (ordered according to the graded lexicographic order) and 
$\v_t(\x)$ be the vector of all monomials up to degree $t$, with length $b(n,t)={\binom {n+t} n}$.

For each $\mathcal{A}\subset \N^n$, denote $\v_\cA(\x)=(\x^\a)_{\a\in\cA}$.
We say that a polynomial $q$ is \emph{even in each variable} if for every $j\in[n]$, 
$q(x_1,\dots,x_{j-1},-x_j,x_{j+1},\dots,x_{n})= q(x_1,\dots,x_{j-1},x_j,x_{j+1},\dots,x_{n})$.
A polynomial $q$ is called an \emph{SOS of monomials} if $q=\sum_{\a\in\N^n}\lambda_\a \x^{2\a}$ for some $\lambda_\a\ge 0$.
Accordingly, if $q$ is an SOS of monomials, then $q=\v_d^\top \diag(\mathbf{u}) \v_d$ for some $d\in\N$ and $\mathbf{u}\in\R_+^{b(n,d)}$.
For a given real-valued sequence $y=(y_\alpha)_{\alpha\in\N^n}$, let us define the Riesz linear functional $L_y:\R[ \x ] \to \R$ by $p\mapsto {L_y}( p ) := \sum_{\alpha} p_\alpha y_\alpha$, for all $ p=\sum_{\alpha}p_\alpha \x^\alpha\in\R[\x]$.

\paragraph{Factor width:}  Originally defined in \cite{boman2005factor}, the factor width of a real positive semidefinite matrix $\mathbf G$ is the smallest integer $s$ for which there exists a real matrix $\mathbf P$ such that $\mathbf G$ can be decomposed as $\mathbf G =\mathbf P\mathbf P^\top$ and each column of $\mathbf P$ contains at most $s$ nonzeros.
In this case, if $\mathbf u$ is a vector of several monomials in $\x$, the SOS polynomial $\mathbf u^\top \mathbf G\mathbf u$ can be written as $\mathbf u(\x)^\top \mathbf G\mathbf u(\x)=\sum_{i}(\mathbf q_i^\top \mathbf u(\x))^2$, where $\mathbf q_i$ is the $i$-th column of $\mathbf P$.
It is not hard to prove that the Gram matrix of each square $(\mathbf q_i^\top \mathbf u(\x))^2$ has size at most $s$ since $\mathbf q_i$ has at most $s$ nonzeros.
Thus, if an SOS polynomial has  Gram matrix of factor width at most $s$, it can be written as a sum of SOS polynomials with Gram matrix sizes at most $s$. 
The converse also holds true thanks to eigen-decomposition.
The applications of factor width for polynomial optimization can be found in, e.g., \cite{ahmadi2014dsos,majumdar2014control,ahmadi2019dsos,miller2022decomposed}.

\paragraph{POP with nonnegative variables:} In the present paper, we focus on the following POP on the nonnegative orthant:
\begin{equation}\label{eq:constrained.problem.poly.even}
\begin{array}{l}
f^\star:=\inf\limits_{\x\in S} f(\x)\,,
\end{array}
\end{equation}
where $f$ is a polynomial and $S$ is a semialgebraic set defined by
\begin{equation}\label{eq:semial.set.def.2.even}
    S:=\{\x\in\R^n\,:\,x_j\ge 0\,,\,j\in[n]\,,\,g_i(\x)\ge 0\,,\,i\in[m]\}\,,
\end{equation}
for some $g_i\in\R[\x]$, $i\in[m]$ with $g_m:=1$.
Letting $\check q(\x):=q(\x^2)$ (with $\x^2:=(x_1^2,\dots,x_n^2)$) whenever $q\in \R[\x]$,
it follows immediately that
problem \eqref{eq:constrained.problem.poly.even} is equivalent to solving
\begin{equation}\label{eq:new.pop}
    f^\star=\inf_{\x\in \check S}\check f(\x)\,,
\end{equation}
where $\check S$ is a subset of $\R^n$ defined by
\begin{equation}\label{eq:new.semi.set}
    \check S:=\{\x\in\R^n\,:\,\check g_i(\x)\ge 0\,,\,i\in[m]\}\,.
\end{equation}

\paragraph{Contribution.}
\revise{In this paper}, we \revise{first} provide in Corollary \ref{coro:compact.even} a degree bound for the extension of P\'olya's Positivstellensatz originally stated in \cite{dickinson2015extension}.
Explicitly, if 

- $\check f, \check g_1,\dots,\check g_m$ are polynomials even in each variable, 

- $\check S$ defined as in \eqref{eq:new.semi.set} has nonempty interior, $\check g_1=R-\|\x\|_2^2$ for some $R>0$, 

- $\check f$ is of degree at most $2d_f$, each $\check g_i$ is of degree at most $2d_{g_i}$, and $\check f -f^\star$ is nonnegative on $\check S$, 

\noindent
then there exist positive constants $\bar{\mathfrak{c}}$ and $\mathfrak{c}$ depending on $\check f,\check g_i$ such that for all $\varepsilon>0$, for all $k\ge \bar{\mathfrak{c}}\varepsilon^{-\mathfrak{c}}$, 
    \begin{equation}\label{eq:Pu-va.ref.Pol}
    \begin{array}{l}
(1+ \|\x\|_2^2)^{k}(\check f-f^\star+\varepsilon)=\sum_{i\in[m]}\sigma_i\check g_i\,,
    \end{array}
    \end{equation}
    for some $\sigma_i$ being SOS of monomials such that $\deg(\sigma_i\check g_i)\le 2(k+d_f)$.
    (Here $\check g_m:=1$.)
    
    Consequently, the resulting LP-hierarchy of lower bounds $(\rho^{\textup{\revise{P\'olya}}}_k)_{k\in\N}$ for POP \eqref{eq:new.pop}:

\begin{equation}\label{eq:dual-sdp.0.even.LP.Han.intro}
\begin{array}{rl}
   {\rho _k^{\textup{\revise{P\'olya}}}}:= \sup\limits_{\lambda,\mathbf u_i} & \lambda\\
   \st& \lambda\in\R\,,\,\mathbf u_i\in\R_+^{b(n,k_i)}\,,\,i\in[m]\,,\\
   &\theta^k(\check f-\lambda)=\sum_{i\in[m]} \check g_i\v_{k_i}^\top \diag(\mathbf{u}_i) \v_{k_i}\,,
\end{array}
\end{equation}
where $\theta := 1+ \|\mathbf{x}\|_2^2$ and 
$k_{i}:=k+d_f - d_{g_i}$, for $i\in[m]$,
converges to $f^\star$ with a rate at least $\mathcal{O}(\varepsilon^{-\mathfrak{c}})$.
This linear hierarchy {was originally described in Dickinson and Povh \cite{dickinson2019new} and the novelty w.r.t. \cite{dickinson2019new} is that we now provide a convergence rate.}
Unfortunately, for large relaxation order $k$, this LP is potentially ill-conditioned (see for instance Example \ref{exam:AMGM}).

\revise{In our next contribution we address this issue. We} replace each diagonal Gram matrix $\diag(u_{j})$ in LP \eqref{eq:dual-sdp.0.even.LP.Han.intro} by a Gram matrix of factor width at most $s\in\N_{> 0}$ to obtain a semidefinite relaxation, which is tighter than LP \eqref{eq:dual-sdp.0.even.LP.Han.intro}.
Namely, consider the following SDP indexed by $k\in\N$ and $s\in\N_{> 0}$:
\begin{equation}\label{eq:primal.problem.0.even.Han.intro}
\begin{array}{rl}
   {\rho _{k,s}^\textup{\revise{P\'olya}}}:= \sup\limits_{\lambda,\mathbf{G}_{ij}} & \lambda\\
   \st& \lambda\in\R\,,\,\mathbf{G}_{ij}\succeq 0\,,\,j\in[b(n,k_i)]\,,\,i\in[m]\,,\\[5pt]
   &\theta^k(\check f-\lambda)=\sum_{i\in[m]} \check g_i\big(\sum_{j\in[b(n,k_i)]}\v_{\cA^{(s,k_i)}_j}^\top \mathbf{G}_{ij} \v_{\cA^{(s,k_i)}_j}\big)\,.
\end{array}
\end{equation}
where each $\mathcal A^{(s,d)}_r\subset \N^n_{d}$, chosen as in Section \ref{sec:dense.POP}, is such that $(\mathcal A^{(s,d)}_r)_{r\in[b(n,d)]}$ covers $\N^n_{d}$, i.e.,
\begin{equation}\label{eq:cover.diagonal}
    \begin{array}{l}
         \bigcup_{r=1}^{b(n,d)} \mathcal A^{(s,d)}_r = \N^n_{d}\,,
    \end{array}
\end{equation}
and the cardinality of  $\mathcal A^{(s,d)}_r$ is at most $s$.
Here $\check g_m:=1$.
We call $s$ the factor width upper bound associated with the semidefinite relaxation \eqref{eq:primal.problem.0.even.Han.intro}.
It is easy to see that the size of each Gram matrix $\mathbf{G}_{ij}$ in \eqref{eq:primal.problem.0.even.Han.intro} is at most $s$.
In addition, due to \eqref{eq:cover.diagonal}, we obtain the following estimate for every $s\in[b(n,k)]$:
\begin{equation}
    \rho_k^{\textup{\revise{P\'olya}}}= \rho_{k,1}^\textup{\revise{P\'olya}}\le \rho_{k,s}^\textup{\revise{P\'olya}}\le f^\star\,,
\end{equation}
so that for every fixed $s\in\N_{> 0}$, $\tau_{k,s}^\textup{\revise{P\'olya}}\to f^\star$ as $k$ increases, with a rate at least $\mathcal{O}(\varepsilon^{-\mathfrak{c}})$.
Notice that when $s=2$, \eqref{eq:primal.problem.0.even.Han.intro} becomes an SOCP thanks to \cite[Lemma 15]{magron2023sonc}.

We emphasize that in our semidefinite relaxation \eqref{eq:primal.problem.0.even.Han.intro}, 
for fixed $k$ the size of Gram matrices $\mathbf{G}_{ij}$ can be bounded from above by any $s\in\N_{> 0}$ while the maximal matrix size of the standard semidefinite relaxation for POP \eqref{eq:constrained.problem.poly.even} is fixed for each relaxation order $k$.
Nevertheless, since we convert \eqref{eq:constrained.problem.poly.even} to the form \eqref{eq:new.pop} (so as to use Corollary \ref{coro:compact.even}), 
the degrees of the {new} resulting objective and constraint polynomials are doubled, i.e., $\deg(\check f)=2\deg(f)$ and $\deg(\check g_i)=2\deg(g_i)$.

However, numerical experiments in Section \ref{sec:benchmark.polya} suggest that our method works { significantly} better than existing methods on examples of POPs with nonnegative variables. 
For instance, for $20$-variable dense POPs on the nonnegative orthant, the standard SOS-relaxations based on Putinar's Positivstellensatz provide a lower bound for $f^\star$ in {about} 356 seconds while  we can provide a better lower bound in {about} 5 seconds.

\revise{Finally, in our last contribution in Sections \ref{sec:linear.CS} and \ref{sec:interrup.CS}}, we provide two convergent hierarchies of linear and semidefinite relaxations for {large scale} POPs on the nonnegative orthant, that exploit \emph{correlative sparsity}, and with properties similar to those {in}  \eqref{eq:dual-sdp.0.even.LP.Han.intro} and \eqref{eq:primal.problem.0.even.Han.intro}.
Accordingly,
for POPs on the nonnegative orthant  with up to $1000$ variables, we can provide lower bounds 
in no more than $19$ seconds which are better than those obtained in about $56360$ seconds with the sparsity-adapted version
of the standard SOS-relaxations of  Waki et al. \cite{waki2006sums}.

\subsection*{Related works}
\paragraph{Exploiting sparsity:}

Structure exploitation in \eqref{eq:primal.problem.0.even.Han.intro} 
is comparable to term sparsity and correlative sparsity but 
here we can deal with dense POPs of the form \eqref{eq:constrained.problem.poly.even}. 
Moreover, the maximal block sizes involved in the sparsity-exploiting SDP relaxations mainly depend on the POP itself as well as on the relaxation order.
By comparison, the maximal block size of our SDP relaxations is controllable. 
\revise{
\paragraph{Exploiting even/sign symmetry:}
Another idea is to exploit even/sign symmetry based on Gatermann--Parrilo's work \cite[Section 8.1]{gatermann2004symmetry} for the semidefinite relaxations of POP \eqref{eq:new.pop};  see also L\"ofberg's paper \cite{lofberg2009pre}. 
Similarly to sparsity exploitation, even/sign symmetry exploitation relies on the even monomials existing in the input data to provide block-diagonal structures for the matrix variables of the relaxations; see Proposition \ref{prop:sparsity.Pu}. 
As explained later on in Remark \ref{rem:even.symmetry}, the block sizes of these form cannot be adjusted. 
Our method is also based on even/sign symmetry to obtain block-diagonal structures for the semidefinite relaxations. 
However, the block sizes of our relaxations can be appropriately calibrated. They can even be calibrated to one when using positivity certificates involving uniform denominators. 
}
\paragraph{Dickinson--Povh's hierarchy of linear relaxations:} Dickinson and Povh state in \cite{dickinson2015extension} the following constrained version of P\'olya's Positivstellensatz:
\revise{
\begin{theorem}\label{theo:dickinson.povh}
If $f,g_1,\dots,g_m$ are homogeneous polynomials, $S$ is defined as in \eqref{eq:semial.set.def.2.even}, and $f$ is positive on $S\backslash \{\mathbf{0}\}$, then  
    \begin{equation}\label{eq:PuVa.perturb2}
    \begin{array}{l}
         (\sum_{j\in[n]}x_j)^kf=\sum_{i\in[m]}\sigma_ig_i\,,
    \end{array}
    \end{equation}
    for some homogeneous polynomials $\sigma_i$ with positive coefficients. (Here $g_m:=1$.)
\end{theorem}
}
\revise{
Moreover, in \cite{dickinson2019new}, Dickinson and Povh constructed the following hierarchy of linear relaxations for problem \eqref{eq:constrained.problem.poly.even} with $f$ being a polynomial and $S$ being a semialgebraic set defined  as in \eqref{eq:semial.set.def.2.even}:
\begin{equation}\label{eq:Dickinson-Povh.hierarchy}
\begin{array}{rl}
   {\hat \rho^{\textup{\revise{P\'olya}}}_k}:= \sup\limits_{\lambda,\mathbf u_i} & \lambda\\
   \st& \lambda\in\R\,,\,\mathbf u_i\in\R_+^{b(n,k_i)}\,,\,i\in[m]\,,\\
   &(1+\sum_{j\in[n]} x_j)^k(f-\lambda)=\sum_{i\in[m]} g_i\v_{k_i}^\top\mathbf{u}_i)\,,
\end{array}
\end{equation}
where $f$ is of degree at most $d_f$, each $g_i$ is of degree at most $d_{g_i}$, and $k_{i}:=k+d_f - d_{g_i}$, for $i\in[m]$.
Using Theorem \ref{theo:dickinson.povh}, they obtain the convergence of $(\rho^{\textup{\revise{P\'olya}}}_k)_{k\in\N}$ toward $f^\star$.
}
    
The extension of P\'olya's Positivstellensatz restated in Corollary \ref{coro:compact.even} is indeed analogous to \eqref{eq:PuVa.perturb2}.
However the approach is different and importantly, the result is more convenient as we provide \emph{degree bounds} for the SOS of monomials involved in the representation.
Similarly, our corresponding linear relaxations  \eqref{eq:primal.problem.0.even.LP} are the analogues to
 \revise{relaxations \eqref{eq:Dickinson-Povh.hierarchy}} of Dickinson and Povh.
As shown in Example \ref{exam:AMGM} and other examples in Section \ref{sec:benchmark.polya}, this hierarchy of linear relaxations 
usually have a poor numerical behavior in practice when $k$ is large. \revise{The goal of
our new hierarchy of semidefinite relaxations \eqref{eq:primal.problem.0.even} is precisely to address this issue.}

\paragraph{DSOS and SDSOS:}  {In their recent work \cite{ahmadi2019dsos},
Ahmadi and Majumdar describe the two convex cones DSOS and SDSOS as
an alternative to the SOS cone. As the factor width of DSOS and SDSOS 
is at most $2$, they are more tractable than the SOS cone.} 
In the unconstrained case of POP \eqref{eq:new.pop}, our semidefinite hierarchy based on the extension of  P\'olya’s Positivstellensatz can be seen as a generalization of DSOS and SDSOS while using the notion of factor width, see Remark \ref{rem:SDSOS}.
In fact, to obtain our semidefinite relaxations for the constrained case \eqref{eq:new.pop}, we replace each SOS of monomials involved in the certificate \eqref{eq:Pu-va.ref.Pol} by an SOS polynomial whose Gram matrix has factor width at most $s$; see Remark \ref{rem:idea.replace.SOS}.

\section{Representation theorems: Extension of P\'olya's Positivstellensatz}

In this section, we derive representations of polynomials nonnegative on semialgebraic sets together with degree bounds. 

\subsection{Polynomials nonnegative on general semialgebraic sets}
We analyze the complexity of the extension of P\'olya's Positivstellensatz in the following theorem:
\begin{theorem}\label{theo:complex.putinar.vasilescu.even}
(Homogenized representation)
Let $g_1,\dots,g_m$ be homogeneous polynomials  such that $g_1,\dots,g_m$ are even in each variable.
Let $S$ be the semialgebraic set defined by
\begin{equation}\label{eq:semialgebraic.set}
    S:=\{\x\in\R^n\,:\,g_1(\x)\ge 0\dots,g_m(\x)\ge 0\}\,.
\end{equation}
Let $f$ be a homogeneous polynomial of degree $2d_f$ for some $d_f\in\N$ such that $f$ is even in each variable and nonnegative on $S$.
Then the following statements hold:
\begin{enumerate}
    \item For all $\varepsilon>0$, there exists $K_\varepsilon\in\N$ such that for all $k\ge K_\varepsilon$, there exist homogeneous SOS of monomials $\sigma_i$ satisfying
    \begin{equation}\label{eq:degree.SOS.even}
    \deg(\sigma_0)=\deg(\sigma_1g_1)=\dots=\deg(\sigma_mg_m)=2(k+d_f)
\end{equation}
and
\begin{equation}\label{eq:represent.even}
    \|\x\|_2^{2k}(f+\varepsilon\|\x\|_2^{2d_f})=\sigma_0+\sigma_1g_1+\dots+\sigma_mg_m\,.
\end{equation}
    \item If $S$ has nonempty interior, then there exist positive constants $\bar{\mathfrak{c}}$ and $\mathfrak{c}$ depending on $f,g_i$ such that for all $\varepsilon>0$,  one can take  $K_{\varepsilon} = 
 \bar{\mathfrak{c}}\varepsilon^{-\mathfrak{c}}$.
\end{enumerate}
\end{theorem}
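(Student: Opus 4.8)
The plan is to use the even-in-each-variable hypothesis to pass, via the componentwise squaring $\x^2:=(x_1^2,\dots,x_n^2)$, to a genuine constrained P\'olya problem on the nonnegative orthant, to settle the first statement with the Dickinson--Povh extension \eqref{eq:PuVa.perturb2}, and then to quantify that extension through an explicit positivity margin. Since $f,g_1,\dots,g_m$ are even in each variable, I would first write $f(\x)=\hat f(\x^2)$ and $g_i(\x)=\hat g_i(\x^2)$ with $\hat f,\hat g_i$ homogeneous of degrees $d_f$ and $\deg(g_i)/2$. Under this substitution one has $\|\x\|_2^{2}=\sum_{j}x_j^2\mapsto \sum_j y_j$, and a homogeneous SOS of monomials $\sum_\a\lambda_\a\x^{2\a}$ ($\lambda_\a\ge0$) corresponds exactly to the homogeneous polynomial $\sum_\a\lambda_\a\y^\a$ with nonnegative coefficients; conversely every nonnegative-coefficient homogeneous polynomial in $\y$ pulls back to an SOS of monomials in $\x$. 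The image of $S$ from \eqref{eq:semialgebraic.set} under $\x\mapsto\x^2$ is $\hat S:=\{\y\in\R_+^n:\hat g_i(\y)\ge0,\ i\in[m]\}$, which is exactly of the form \eqref{eq:semial.set.def.2.even}.

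For the first statement, I would record the positivity needed to invoke \eqref{eq:PuVa.perturb2}. As $f\ge0$ on $S$, we get $\hat f\ge0$ on $\hat S$; putting $\hat h_\varepsilon:=\hat f+\varepsilon\,(\sum_j y_j)^{d_f}$ and using $\sum_j y_j>0$ on $\R_+^n\setminus\{\mathbf 0\}$, the polynomial $\hat h_\varepsilon$ is strictly positive on $\hat S\setminus\{\mathbf 0\}$. Applying \eqref{eq:PuVa.perturb2} to the homogeneous $\hat h_\varepsilon$ gives, for all large $k$, homogeneous $\hat\sigma_0,\dots,\hat\sigma_m$ with nonnegative coefficients such that $(\sum_j y_j)^k\hat h_\varepsilon=\sum_{i=0}^m\hat\sigma_i\hat g_i$, with $\hat g_0:=1$. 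Substituting $y_j=x_j^2$ back and setting $\sigma_i(\x):=\hat\sigma_i(\x^2)$ yields \eqref{eq:represent.even} with each $\sigma_i$ an SOS of monomials, and the degree identity \eqref{eq:degree.SOS.even} is then forced: both sides are homogeneous of degree $k+d_f$ in $\y$, i.e.\ of degree $2(k+d_f)$ in $\x$.

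For the second statement, I would make $K_\varepsilon$ explicit by dehomogenizing onto the simplex $\Delta:=\{\y\in\R_+^n:\sum_j y_j=1\}$. By homogeneity the sought representation on the cone is equivalent to one over $\Delta$, where $\hat h_\varepsilon=\hat f+\varepsilon$ and hence $\hat h_\varepsilon\ge\varepsilon$ on the compact set $\hat S\cap\Delta$, while the coefficients of $\hat h_\varepsilon$ stay bounded as $\varepsilon\to0$. Thus the positivity margin of $\hat h_\varepsilon$ is at least $\varepsilon$, and feeding this margin into a quantitative (margin-explicit) form of P\'olya's theorem for the orthant-constrained setting produces a bound $K_\varepsilon\le\bar{\mathfrak c}\,\varepsilon^{-\mathfrak c}$, where $\bar{\mathfrak c},\mathfrak c$ absorb the fixed data $\hat f,\hat g_i$ and the geometry of $\hat S$. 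This is exactly where the nonempty-interior assumption on $S$ (equivalently on $\hat S$) is used: it furnishes a Slater/\L{}ojasiewicz-type nondegeneracy under which the required degree grows only polynomially in the reciprocal margin $\varepsilon^{-1}$.

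The main obstacle I anticipate is precisely this quantitative step: establishing, or correctly invoking, a constrained P\'olya bound whose degree depends polynomially on the positivity margin, and verifying that all constants other than the margin remain bounded uniformly as $\varepsilon\to0$. The reduction via $\x\mapsto\x^2$ and the degree bookkeeping are routine once the correspondence between SOS of monomials and nonnegative-coefficient polynomials is in place; the delicate point is controlling the growth of $k$ uniformly in $\varepsilon$ for nonlinear constraints $\hat g_i$, which is what makes the nonempty-interior hypothesis indispensable.
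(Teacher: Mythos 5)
Your treatment of the first statement matches the paper's: reduce via $\x\mapsto\x^2$ to the orthant-constrained homogeneous setting, apply the Dickinson--Povh extension to $\hat f+\varepsilon(\sum_j y_j)^{d_f}$, and pull back, with the degree identity forced by homogeneity. That part is sound and is essentially what the paper does in its Step 1.

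The second statement, however, has a genuine gap exactly where you flag your ``main obstacle'': there is no off-the-shelf quantitative \emph{constrained} P\'olya theorem into which you can simply ``feed the margin''. The only margin-explicit result available is the unconstrained Powers--Reznick bound (Lemma \ref{lem:bound.polya}), which requires the polynomial to be positive on the \emph{whole} simplex $\Delta_n$; your $\hat h_\varepsilon=\hat f+\varepsilon$ is only guaranteed positive on $\hat S\cap\Delta_n$. Bridging this is the entire content of the paper's proof of the second statement: it explicitly constructs bounded Lipschitz weight functions $\bar\varphi_i$ with $\tilde f+\varepsilon-\sum_i\bar\varphi_i^2\tilde g_i\ge\varepsilon/2^{\bar m}$ on $[-1,1]^n$ (this is where the nonempty-interior hypothesis enters, via the construction borrowed from \cite{mai2022complexity}), approximates them by Bernstein polynomials $q_i$ of controlled degree and norm, homogenizes, and only then applies Powers--Reznick \emph{separately} to the remainder $F$ and to each weight $p_i$ so that every factor acquires nonnegative coefficients. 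The $\varepsilon^{-\mathfrak c}$ rate comes from tracking how the Lipschitz constants, sup-norms and Bernstein degrees all blow up polynomially in $\varepsilon^{-1}$. Without carrying out (or correctly citing) such a construction, the claimed bound $K_\varepsilon=\bar{\mathfrak c}\varepsilon^{-\mathfrak c}$ is unsupported. Note also that your remark that ``all constants other than the margin remain bounded uniformly as $\varepsilon\to0$'' is not what happens: the weights themselves have norms and degrees growing like negative powers of $\varepsilon$, and this growth must be absorbed into the final exponent $\mathfrak c$ rather than assumed away.
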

The proof of Theorem \ref{theo:complex.putinar.vasilescu.even} is postponed to Section \ref{proof:PV.SOSmono}.

Note that some other homogeneous representations for globally nonnegative polynomials even in each variable have been studied in \cite{goel2017analogue,harris1999real,choi1987even}.
\revise{Accordingly, these polynomials have a small number of variables with low degrees and are represented as sums of squares of polynomials with degrees no more than half the maximal degree of the original polynomials.
Compared to this, Theorem \ref{theo:complex.putinar.vasilescu.even} provides a representation -- with denominators and implicit degree bounds -- of a homogeneous polynomial (with any number of variables and any degree) that is nonnegative over a semialgebraic set defined by homogeneous polynomial inequalities.
}

\begin{remark}
\revise{In Theorem \ref{theo:complex.putinar.vasilescu.even}}, the Gram matrix associated with each \revise{homogeneous} SOS of monomials is diagonal. 
In other word, it is a block-diagonal matrix with maximal block size one.
It would be interesting to know for which types of input polynomials we could obtain other representations involving SOS with block-diagonal Gram matrices of very small maximal block size, similarly to Theorem \ref{theo:complex.putinar.vasilescu.even}.
Some of them have been discussed in \cite{gouveia2022sums,magron2023sonc} that includes SOS of binomials, trinomials, tetranomials and SOS of any $s$-nomials.
We emphasize that such representations allow one to build up SDP relaxations of small maximal matrix size that can be solved efficiently by using interior-point methods as shown later in Section \ref{sec:benchmark.polya}.
\end{remark}

The following corollary is a direct consequence of Theorem \ref{theo:complex.putinar.vasilescu.even}.
\begin{corollary}
\label{coro:dehomo.even}
(Dehomogenized representation)
Let $g_1,\dots,g_m$ be polynomials even in each variable.  
Let $S$ be the semialgebraic set defined by \eqref{eq:semialgebraic.set}.
Let $f$ be a polynomial even in each variable and nonnegative on $S$.
Denote $d_f: = \lfloor \deg(f)/2\rfloor +1$.
Then the following statements hold:
\begin{enumerate}
    \item For all $\varepsilon>0$, there exists $K_\varepsilon\in\N$ such that for all $k\ge K_\varepsilon$, there exist SOS of monomials $\sigma_i$ satisfying
\begin{equation}\label{eq:bound.degree.dehomo}
    \deg(\sigma_0)\le 2(k+d_f)\quad\text{and}\quad\deg(\sigma_ig_i)\le 2(k+d_f)\,,\,i\in [m]\,,
\end{equation}
and 
\begin{equation}\label{eq:repre.dehomo}
   \theta^{k}(f+\varepsilon\theta^{d_f})=\sigma_0+\sigma_1g_1+\dots+\sigma_mg_m\,,
\end{equation}
where $\theta:=1+\|\x\|_2^2$.
\item If $S$ has nonempty interior,
there exist positive constants $\bar{\mathfrak{c}}$ and $\mathfrak{c}$ depending on $f,g_i$ such that for all $\varepsilon>0$, one can take 
$K_{\varepsilon} = \bar{\mathfrak{c}}\varepsilon^{-\mathfrak{c}}$. 
\end{enumerate} 
\end{corollary}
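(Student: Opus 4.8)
The plan is to reduce the corollary to its homogeneous counterpart, Theorem \ref{theo:complex.putinar.vasilescu.even}, by introducing one auxiliary homogenizing variable $t$ and afterwards setting $t=1$. First I would record that, since $f$ and the $g_i$ are even in each variable, every monomial occurring in them has all exponents even; in particular $\deg(f)$ and each $\deg(g_i)$ are even, say $\deg(f)=2(d_f-1)$ and $\deg(g_i)=2e_i$. The point of the definition $d_f=\lfloor\deg(f)/2\rfloor+1$ is precisely that $\deg(f)=2d_f-2<2d_f$, a slack I will exploit below.

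Next I would homogenize. Writing $f=\sum_\a f_\a\x^\a$, set $F(\x,t):=t^{2d_f}f(\x/t)=\sum_\a f_\a\,t^{2d_f-|\a|}\x^\a$ and likewise $G_i(\x,t):=t^{2e_i}g_i(\x/t)$, now regarded as polynomials in the $n+1$ variables $(\x,t)$. Because each $|\a|$ is even and each $t$-exponent $2d_f-|\a|$ (resp.\ $2e_i-|\a|$) is then even, $F$ and the $G_i$ are homogeneous and even in each of the $n+1$ variables, and $F$ has degree exactly $2d_f$. Crucially, every monomial of $f$ has $|\a|\le 2d_f-2$, so every term of $F$ carries a strictly positive power of $t$; hence $F$ vanishes identically on $\{t=0\}$. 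Putting $\hat S:=\{(\x,t)\in\R^{n+1}:G_i(\x,t)\ge0,\,i\in[m]\}$, I would verify $F\ge0$ on $\hat S$: on $\{t\ne0\}$ one has $G_i(\x,t)\ge0\Leftrightarrow g_i(\x/t)\ge0$ (as $t^{2e_i}>0$), so $\x/t\in S$ and $F(\x,t)=t^{2d_f}f(\x/t)\ge0$; on $\{t=0\}$, $F\equiv0$. Thus $F,G_1,\dots,G_m$ in $n+1$ variables satisfy the hypotheses of Theorem \ref{theo:complex.putinar.vasilescu.even}.

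Applying Theorem \ref{theo:complex.putinar.vasilescu.even}(1) then yields, for every $\varepsilon>0$ and all large $k$, homogeneous SOS of monomials $\Sigma_i$ in $(\x,t)$ with $\deg(\Sigma_0)=\deg(\Sigma_iG_i)=2(k+d_f)$ and $(\|\x\|_2^2+t^2)^{k}\big(F+\varepsilon(\|\x\|_2^2+t^2)^{d_f}\big)=\Sigma_0+\sum_{i\in[m]}\Sigma_iG_i$. Substituting $t=1$ turns $\|\x\|_2^2+t^2$ into $\theta=1+\|\x\|_2^2$, sends $F(\x,1)=f$ and $G_i(\x,1)=g_i$, and makes each $\sigma_i:=\Sigma_i(\x,1)$ again an SOS of monomials (a squared monomial maps to a squared monomial with the same nonnegative coefficient). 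This produces exactly \eqref{eq:repre.dehomo}; and since dehomogenizing a form of degree $2(k+d_f)$ can only lower its degree, $\deg(\sigma_0)\le2(k+d_f)$ and $\deg(\sigma_ig_i)\le2(k+d_f)$, which is \eqref{eq:bound.degree.dehomo}.

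For part~2 I would transfer the complexity bound from Theorem \ref{theo:complex.putinar.vasilescu.even}(2), for which it suffices to show $\inter(S)\ne\emptyset$ forces $\inter(\hat S)\ne\emptyset$. Fix $\x^0\in\inter(S)$. The map $\Phi(\x,t)=\x/t$ is continuous on the open set $\{t>0\}$, so $U:=\Phi^{-1}(\inter S)\cap\{t>0\}$ is open in $\R^{n+1}$ and nonempty, since $(\x^0,1)\in U$; moreover every $(\x,t)\in U$ has $\x/t\in S$, whence $G_i(\x,t)\ge0$ and $U\subseteq\hat S$, so $U\subseteq\inter(\hat S)$. Theorem \ref{theo:complex.putinar.vasilescu.even}(2) then supplies constants $\bar{\mathfrak c},\mathfrak c$ for $F,G_i$, and the substitution $t=1$ carries them to the claimed $K_\varepsilon=\bar{\mathfrak c}\varepsilon^{-\mathfrak c}$. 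The only genuine care needed is the bookkeeping of degrees and of the even-in-each-variable property through homogenization; the slack $\deg(f)<2d_f$ baked into the definition of $d_f$ is exactly what forces $F$ to vanish at $t=0$, thereby removing any positivity requirement on the leading forms, so no real obstacle remains.
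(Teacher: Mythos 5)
Your proposal is correct and follows essentially the same route the paper intends: the paper gives no explicit proof but defers to the proof of \cite[Corollary 1]{mai2022complexity}, which is precisely this homogenize--apply--dehomogenize argument (introduce one auxiliary variable, invoke Theorem \ref{theo:complex.putinar.vasilescu.even} for the homogenized data, then substitute the auxiliary variable equal to $1$ so that $\|(\x,t)\|_2^2$ becomes $\theta$). Your identification of the role of the slack in $d_f=\lfloor\deg(f)/2\rfloor+1$ — forcing the homogenization to vanish on $\{t=0\}$ so that nonnegativity on $\hat S$ needs no hypothesis on leading forms — and your check that nonempty interior of $S$ lifts to $\hat S$ are exactly the points that make the reduction work.
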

The proof of Corollary \ref{coro:dehomo.even} is similar to the proof of \cite[Corollary 1]{mai2022complexity}.

\subsection{Polynomials nonnegative on compact semialgebraic sets}

In this section, we provide a  representation of polynomials nonnegative on semialgebraic sets when the input polynomials are even in each variable.
We also derive in Section \ref{sec:sparse.rep} some sparse representations when the input polynomials have correlative sparsity.

The following \revise{result follows from} Corollary \ref{coro:dehomo.even}.
\begin{corollary}\label{coro:compact.even}
Let $f,g_i,S,d_f$ be as in Corollary \ref{coro:dehomo.even} such that $g_1:=R-\|\x\|_2^2$ for some $R>0$.
Then the following statements hold:
\begin{enumerate}
    \item For all $\varepsilon>0$, there exists $K_\varepsilon\in\N$ such that for all $k\ge K_\varepsilon$, there exist SOS of monomials $\sigma_i$ satisfying \eqref{eq:bound.degree.dehomo}
and 
\begin{equation}\label{eq:reperese.compact}
   (1+\|\x\|_2^2)^{k}(f+\varepsilon)=\sigma_0+\sigma_1g_1+\dots+\sigma_mg_m\,.
\end{equation}
\item If $S$ has nonempty interior,
there exist positive constants $\bar{\mathfrak{c}}$ and $\mathfrak{c}$ depending on $f,g_i$ such that for all $\varepsilon>0$, one can take 
$K_{\varepsilon} = \bar{\mathfrak{c}}\varepsilon^{-\mathfrak{c}}$.
\end{enumerate}
\end{corollary}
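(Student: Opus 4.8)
The plan is to derive Corollary \ref{coro:compact.even} from the dehomogenized representation of Corollary \ref{coro:dehomo.even} by exploiting the boundedness induced by the ball constraint $g_1=R-\|\x\|_2^2$. The only discrepancy between the two statements is the perturbation term: Corollary \ref{coro:dehomo.even} certifies $\theta^k(f+\varepsilon'\theta^{d_f})$, whereas here the target is $\theta^k(f+\varepsilon)$. On $S$ the factor $\theta^{d_f}$ is bounded, so I would pick the perturbation parameter fed into Corollary \ref{coro:dehomo.even} proportional to $\varepsilon$ and then absorb the resulting mismatch into the multiplier of $g_1$.

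First I would record two elementary facts. Since $\theta=1+\|\x\|_2^2$ is itself an SOS of monomials, every power $\theta^j$ is an SOS of monomials, and sums and products of SOS of monomials (with nonnegative coefficients) are again SOS of monomials. Second, writing $t:=\theta/(1+R)$, the ball constraint gives the identity $1-t=(R-\|\x\|_2^2)/(1+R)=g_1/(1+R)$, hence the telescoping factorization
\[
1-t^{d_f}=(1-t)\sum_{j=0}^{d_f-1}t^{j}=\frac{g_1}{1+R}\sum_{j=0}^{d_f-1}\Big(\frac{\theta}{1+R}\Big)^{j}.
\]

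Next, I would set $\varepsilon':=\varepsilon/(1+R)^{d_f}$ and apply Corollary \ref{coro:dehomo.even} to $f$ with perturbation $\varepsilon'$: for all $k\ge K_{\varepsilon'}$ there exist SOS of monomials $\sigma_0,\dots,\sigma_m$ obeying the degree bounds \eqref{eq:bound.degree.dehomo} with $\theta^k(f+\varepsilon'\theta^{d_f})=\sigma_0+\sigma_1 g_1+\dots+\sigma_m g_m$. I would then split $\theta^k(f+\varepsilon)=\theta^k(f+\varepsilon'\theta^{d_f})+\theta^k(\varepsilon-\varepsilon'\theta^{d_f})$ and, using the choice of $\varepsilon'$ together with the factorization above, rewrite the correction as
\[
\varepsilon-\varepsilon'\theta^{d_f}=\varepsilon\big(1-t^{d_f}\big)=\frac{\varepsilon}{1+R}\Big(\sum_{j=0}^{d_f-1}\Big(\tfrac{\theta}{1+R}\Big)^{j}\Big)g_1.
\]
Multiplying by $\theta^k$, the correction term becomes $\sigma_1' g_1$ with $\sigma_1':=\frac{\varepsilon}{1+R}\theta^k\sum_{j=0}^{d_f-1}(\theta/(1+R))^j$, which is an SOS of monomials by the first remark. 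Adding it to the representation above yields $\theta^k(f+\varepsilon)=\sigma_0+(\sigma_1+\sigma_1')g_1+\sum_{i\ge2}\sigma_i g_i$, i.e.\ exactly the form \eqref{eq:reperese.compact}, since $\sigma_1+\sigma_1'$ is again an SOS of monomials.

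Finally I would verify the bookkeeping. For the degrees, $\deg\sigma_1'=2k+2(d_f-1)$, so $\deg(\sigma_1' g_1)=2(k+d_f)$ because $\deg g_1=2$; combined with the bound $\deg(\sigma_1 g_1)\le 2(k+d_f)$ inherited from Corollary \ref{coro:dehomo.even}, this gives $\deg((\sigma_1+\sigma_1')g_1)\le 2(k+d_f)$ and hence \eqref{eq:bound.degree.dehomo}. For the rate in part (2), substituting $\varepsilon'=\varepsilon/(1+R)^{d_f}$ into the bound $K_{\varepsilon'}=\bar{\mathfrak c}'(\varepsilon')^{-\mathfrak c'}$ of Corollary \ref{coro:dehomo.even} produces $K_\varepsilon=\bar{\mathfrak c}'(1+R)^{d_f\mathfrak c'}\varepsilon^{-\mathfrak c'}$, which is of the claimed form $\bar{\mathfrak c}\varepsilon^{-\mathfrak c}$ with constants depending only on $f$ and the $g_i$. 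I expect no serious obstacle: the argument is essentially a rescaling of the perturbation parameter plus one algebraic identity coming from the ball constraint, and the only point deserving care is confirming that the correction $\sigma_1'$ keeps $\deg(\sigma_1' g_1)$ exactly at $2(k+d_f)$ rather than overshooting it.
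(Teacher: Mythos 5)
Your proposal is correct and follows exactly the route the paper intends: the paper states that Corollary \ref{coro:compact.even} "is deduced from Corollary \ref{coro:dehomo.even}" and refers to the analogous proof of \cite[Corollary 2]{mai2022complexity}, which is precisely your rescaling $\varepsilon'=\varepsilon/(1+R)^{d_f}$ combined with the telescoping identity $1-t^{d_f}=(1-t)\sum_{j<d_f}t^j$ and $1-t=g_1/(1+R)$ to absorb the mismatch into the multiplier of $g_1$. Your degree bookkeeping ($\deg(\sigma_1'g_1)=2(k+d_f)$) and the observation that nonnegative combinations of powers of $\theta$ are SOS of monomials are both sound, so there is nothing to correct.
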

Corollary \ref{coro:compact.even} can be proved in the same way as \cite[
Corollary 2]{mai2022complexity}.
\begin{remark}
\label{re:non.block1.Pu}
If we remove the multiplier $(1+\|\x\|_2^2)^k$ in \eqref{eq:reperese.compact}, Corollary \ref{coro:compact.even} is no longer true.
Indeed, let $n=1$, $f:=(x^2-\frac{3}{2})^2$ and assume that  $f=\sigma_0+\sigma_1(1-x^2)$ for some SOS of monomials $\sigma_i$, $i=0,1$.
Note that $f$ is even and positive on $[-1,1]$.
We write $\sigma_i:=a_i+b_i x^2+x^4r_i(x)$ for some $a_i,b_i\in\R_+$ and $r_i\in\R[x]$.
It implies that
\begin{equation}
\begin{array}{rl}
     x^4-3x^2+\frac{9}{4}=(a_0+b_0 x^2+x^4r_0(x))+(a_1+b_1 x^2+x^4r_1(x))(1-x^2)\,.
\end{array}
\end{equation}
Then we obtain the system of linear equations:
$\frac{9}{4}=a_0+a_1$ and $-3=b_0-a_1+b_1$.
Summing gives $-\frac{3}{4}=a_0+b_0+b_1$. 
However, $a_0+b_0+b_1\ge 0$ since $a_i,b_i\in\R_+$.
This
contradiction yields the conclusion.
However, we are still able to exploit term sparsity/even symmetry for Putinar's Positivstellensatz in this case as shown later in Proposition \ref{prop:sparsity.Pu}.

It is not hard to see that with the multiplier $(1+x^2)^2$, we obtain the P\'olya’s Positivstellensatz as follows:
\begin{equation}\label{eq:Pu-Va.decomp.ins}
    \begin{array}{rl}
         (1+x^2)^2f=\bar \sigma_0+\bar \sigma_1(1-x^2)\,,
    \end{array}
\end{equation}
where $\bar\sigma_0:=x^8$ and $\bar \sigma_1:=x^4+\frac{15}{4}x^2+\frac{9}{4}$ are SOS of monomials.
\end{remark}
\revise{In the following proposition we prove the existence} of block-diagonal Gram matrices in Putinar's Positivstellensatz when the input polynomials are even in each variable:
\begin{proposition}
\label{prop:sparsity.Pu}
Let $f,g_1,\dots,g_m$ be polynomials in $\R[\x]$ such that $f,g_i$ are even in each variable.
Assume that there exists a decomposition:
\begin{equation}
\label{eq:Pu.rep}
\begin{array}{rl}
     f=\sum_{i=1}^m g_i\v_{d_i}^\top \mathbf{G}^{(i)}\v_{d_i}\,,
\end{array}
\end{equation}
for some $d_i\in\N$ and real symmetric  matrices $\mathbf{G}^{(i)}=({G}^{(i)}_{\a,\b})_{ \a,\b\in\N^n_{d_i}}$.
For every $i\in [m]$, define $\bar{\mathbf{G}}^{(i)}:=(\bar G^{(i)}_{\a,\b})_{ \a,\b\in\N^n_{d_i}}$, where:
\begin{equation}
    \bar G^{(i)}_{\a,\b}:=\begin{cases}
    G^{(i)}_{\a,\b} & \text{if }\a+\b\in 2\N^n\,,\\
    0& otherwise\,.
    \end{cases}
\end{equation}
Then $\bar{\mathbf{G}}^{(i)}$ are block-diagonal up to permutation and 
\begin{equation}
\label{eq:block.Pu}
\begin{array}{rl}
     f=\sum_{i=1}^m g_i\v_{d_i}^\top \bar{\mathbf{G}}^{(i)}\v_{d_i}\,.
\end{array}
\end{equation}
Moreover, if $\mathbf{G}^{(i)}\succeq 0$, then $\bar{\mathbf{G}}^{(i)}\succeq 0$.
\end{proposition}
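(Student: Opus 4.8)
The plan is to exploit the invariance of $f$ and the $g_i$ under the group $\Gamma:=\{-1,+1\}^n$ of coordinatewise sign changes and to replace each $\mathbf{G}^{(i)}$ by its average (Reynolds operator) over $\Gamma$. For $\epsilon=(\epsilon_1,\dots,\epsilon_n)\in\Gamma$ write $\epsilon\cdot\x:=(\epsilon_1 x_1,\dots,\epsilon_n x_n)$ and $\epsilon^\a:=\prod_{j}\epsilon_j^{\alpha_j}\in\{-1,+1\}$ for $\a\in\N^n$. Since $f$ and each $g_i$ are even in each variable, they are invariant under every single-coordinate sign flip and hence under all of $\Gamma$, so that $f(\epsilon\cdot\x)=f(\x)$ and $g_i(\epsilon\cdot\x)=g_i(\x)$ for every $\epsilon\in\Gamma$.

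First I would record the transformation law for the monomial vector: because $(\epsilon\cdot\x)^\a=\epsilon^\a\x^\a$, one has $\v_{d_i}(\epsilon\cdot\x)=\mathbf D_\epsilon\v_{d_i}(\x)$, where $\mathbf D_\epsilon:=\diag\big((\epsilon^\a)_{\a\in\N^n_{d_i}}\big)$ is a diagonal sign matrix with $\mathbf D_\epsilon^\top=\mathbf D_\epsilon$. Substituting $\x\mapsto\epsilon\cdot\x$ into \eqref{eq:Pu.rep} and using the invariance of $f$ and $g_i$ then yields, for every $\epsilon\in\Gamma$,
\[
f=\sum_{i=1}^m g_i\,\v_{d_i}^\top\big(\mathbf D_\epsilon\mathbf{G}^{(i)}\mathbf D_\epsilon\big)\v_{d_i}\,.
\]
Averaging these $2^n$ identities over $\epsilon\in\Gamma$ gives $f=\sum_{i=1}^m g_i\,\v_{d_i}^\top\tilde{\mathbf{G}}^{(i)}\v_{d_i}$ with $\tilde{\mathbf{G}}^{(i)}:=2^{-n}\sum_{\epsilon\in\Gamma}\mathbf D_\epsilon\mathbf{G}^{(i)}\mathbf D_\epsilon$. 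The key computation is entrywise: the $(\a,\b)$ entry of $\mathbf D_\epsilon\mathbf{G}^{(i)}\mathbf D_\epsilon$ equals $\epsilon^{\a+\b}G^{(i)}_{\a,\b}$, and since $2^{-n}\sum_{\epsilon\in\Gamma}\epsilon^{\a+\b}$ factorizes as $\prod_{j}\big(\tfrac12\sum_{\epsilon_j\in\{-1,+1\}}\epsilon_j^{(\a+\b)_j}\big)$, it equals $1$ when $\a+\b\in 2\N^n$ and $0$ otherwise. Hence $\tilde G^{(i)}_{\a,\b}=\bar G^{(i)}_{\a,\b}$, i.e. $\tilde{\mathbf{G}}^{(i)}=\bar{\mathbf{G}}^{(i)}$, which is exactly the identity \eqref{eq:block.Pu}.

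Finally I would read off the two remaining claims from the same averaged matrices. Since $\bar G^{(i)}_{\a,\b}=0$ whenever $\a+\b\notin 2\N^n$, that is whenever $\a$ and $\b$ have different coordinatewise parities, grouping the index set $\N^n_{d_i}$ by parity class in $(\Z/2\Z)^n$ brings $\bar{\mathbf{G}}^{(i)}$ into block-diagonal form with one block per parity class; this is precisely the asserted block-diagonality up to permutation. For positive semidefiniteness, if $\mathbf{G}^{(i)}\succeq 0$ then each $\mathbf D_\epsilon\mathbf{G}^{(i)}\mathbf D_\epsilon=\mathbf D_\epsilon^\top\mathbf{G}^{(i)}\mathbf D_\epsilon\succeq 0$, and $\bar{\mathbf{G}}^{(i)}$ is their average, hence $\succeq 0$ by convexity of the PSD cone.

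The argument has no genuine obstacle beyond bookkeeping: the only step requiring care is the entrywise averaging identity, but it factorizes over coordinates and reduces to the elementary fact that $\sum_{\epsilon_j\in\{-1,+1\}}\epsilon_j^{m}$ vanishes exactly when $m$ is odd. The conceptual point worth highlighting is that a single averaging operation simultaneously preserves the representation \eqref{eq:Pu.rep} and annihilates all entries indexed by pairs of differing parity, thereby producing both the block structure and the prescribed zero-pattern at once.
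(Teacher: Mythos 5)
Your proof is correct and follows essentially the same route as the paper: averaging over the group of coordinatewise sign changes is exactly the formal version of the paper's step of deleting all non-even monomial terms from \eqref{eq:Pu.rep} (the paper itself credits the symmetry-reduction idea of Gatermann--Parrilo), and the block-diagonality argument via parity classes in $(\Z/2\Z)^n$ is identical. The only cosmetic difference is in the last claim: you obtain $\bar{\mathbf{G}}^{(i)}\succeq 0$ as a convex combination of the congruences $\mathbf D_\epsilon\mathbf{G}^{(i)}\mathbf D_\epsilon$, whereas in the paper it follows from the diagonal blocks being principal submatrices of $\mathbf{G}^{(i)}$; both are valid.
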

\begin{proof}
The proof is inspired by \cite[Section 8.1]{gatermann2004symmetry}.
Removing all terms in \eqref{eq:Pu.rep} except the terms of monomials $\x^{2\a}$, $\a\in\N^n$, we obtain \eqref{eq:block.Pu}.
It is due to the fact that $f,g_i$ only have terms of the form $\x^{2\a}$, $\a\in\N^n$ and
\begin{equation}
\begin{array}{rl}
     \v_{d_i}^\top \mathbf{G}^{(i)}\v_{d_i}=\sum_{\a,\b\in\N^n_{d_i}}G^{(i)}_{\a,\b} \x^{\a+\b}\,.
\end{array}
\end{equation}
Next, we show the block-diagonal structure of $\bar{\mathbf{G}}^{(i)}$.
For every $\g\in\{0,1\}^n$, define
\begin{equation}
    \Lambda_\g^{(i)}:=\{\a\in\N^n_{d_i}\,:\ ,\a-\g\in2\N^n\}\,.
\end{equation}
Then $\Lambda_\g^{(i)}\cap \Lambda_{\boldsymbol{\eta}}^{(i)}=\emptyset$ if $\g\ne \boldsymbol{\eta}$ and 
$\N^n_{d_i}:=\cup_{\g\in\{0,1\}^n} \Lambda_\g^{(i)}$.
In addition, for all $\a,\b\in \Lambda_\g^{(i)}$, $\a+\b\in 2\N^n$. Moreover, if $\a,\b\in\N^n_{d_i}$ and $\a+\b\in 2\N^n$, then there exists $\g\in \{0,1\}^n$ such that $\a,\b\in \Lambda_\g^{(i)}$.
It implies that all blocks on the diagonal of $\bar{\mathbf{G}}^{(i)}$ must be 
\begin{equation}
    (\bar G^{(i)}_{\a,\b})_{\a,\b\in \Lambda_\g^{(i)}}\,,\,\g\in\{0,1\}^n\,.
\end{equation}
This yields the desired results.
\end{proof}
\begin{remark}\label{rem:even.symmetry}
The block-diagonal structure in Proposition \ref{prop:sparsity.Pu} can be obtained by using TSSOS \cite{wang2021tssos}.
For general input polynomials $f,g_i$, we cannot  ensure that the maximal block size in this form is upper bounded or possibly goes to infinity as each $d_i$ increases.
However, as shown in Remark \ref{re:non.block1.Pu}, we cannot obtain blocks of size one for this form.
\end{remark}

\revise{
\begin{remark}
The following Handelman-type Positivstellensatz is  a consequence of Theorem \ref{theo:complex.putinar.vasilescu.even}:
With the same assumption as in Corollary \ref{coro:compact.even}, $g_m:=1$, $d_{g_i}:=\lceil \deg(g_i)/2\rceil$, for all $\varepsilon>0$, there exists $K_\varepsilon\in\N$ such that for all $k\ge K_\varepsilon$, there exist SOS of monomials $\sigma_{i,j}$ satisfying
\begin{equation}\label{eq:bound.degree.dehomo.without.mul}
    \deg(\sigma_{i,j}g_1^jg_i)\le 2k
\end{equation}
and 
\begin{equation}\label{eq:repre.dehomo.without.mul}
\begin{array}{rl}
     f+\varepsilon=\sum_{i=1}^m\sum_{j=0}^{k-d_{g_i}}\sigma_{i,j}g_1^jg_i\,.
\end{array}
\end{equation}
If $S$ has nonempty interior, then
there exist positive constants $\bar{\mathfrak{c}}$ and $\mathfrak{c}$ depending on $f,g_i$ such that for all $\varepsilon>0$, one can take 
$K_{\varepsilon} = \bar{\mathfrak{c}}\varepsilon^{-\mathfrak{c}}$.
Although the representation \eqref{eq:repre.dehomo.without.mul} does not involve  denominators as in \eqref{eq:reperese.compact}, the number of SOS of monomials in the representation \eqref{eq:repre.dehomo.without.mul} is $\sum_{i=1}^m(k-d_{g_i}+1)$ which becomes larger when $k$ increases, while the number of SOS of monomials in the representation \eqref{eq:reperese.compact} is $m+1$, which does not depend on $k$.
Practical experiments have been conducted to confirm  that this Handelman-type hierarchy for polynomial optimization does not stand out compared to P\'olya's.
\end{remark}
}

\section{Polynomial optimization on the nonnegative orthant: Compact case}
\label{sec:compact.pop}
This section is concerned with some applications {of (i) the extension of P\'olya's Positivstellensatz \eqref{eq:reperese.compact} for polynomial optimization on compact semialgebraic subsets of the nonnegative orthant.
The noncompact case is postponed to Section \ref{sec:pop.noncompact}.
Moreover, Section \ref{sec:sparse.POP.nonneg} is devoted to some applications of the sparse representation provided in Section \ref{sec:sparse.rep} for polynomial optimization with correlative sparsity.

Consider the following POP:
\begin{equation}\label{eq:constrained.problem.poly.even.LP}
\begin{array}{l}
f^\star:=\inf\limits_{\x\in S} f( \x)\,,
\end{array}
\end{equation}
where $f\in\R[\x]$ and
\begin{equation}\label{eq:semial.set.def.2.even.LP}
    S=\{\x\in\R^n\,:\,x_j\ge 0\,,\,j\in[n]\,,\,g_i(\x)\ge 0\,,\,i\in[m]\}\,,
\end{equation}
for some $g_i\in\R[\x]$, $i\in[m]$, with $g_m=1$.
Throughout this section, we assume that $f^\star>-\infty$ and problem \eqref{eq:constrained.problem.poly.even.LP} has an optimal solution $\x^\star$. 
\begin{remark}\label{rem:convert.POP.none.orth}
 Every general POP in variable $\x=(x_1,\dots,x_n)$ can be converted to the form \eqref{eq:constrained.problem.poly.even.LP} { with $S$ as in \eqref{eq:semial.set.def.2.even.LP},} by replacing each variable $x_j$ by the difference of two new nonnegative variables $x_j^+-x_j^-$.
 If there are several constraints $x_j\ge a_j$, we can obtain an equivalent POP on the nonnegative orthant by defining new nonnegative variables $y_j:=x_j-a_j$.
 {In particular, we can easily convert a POP over a compact semialgebraic set to POP over the nonnegative orthant by changing the coordinate via an affine transformation.}
 However, we restrict ourselves to POPs on the nonnegative orthant in this paper.
\end{remark}

Recall that  $\check q(\x):=q(\x^2)$, for a given  polynomial $q$.
In this case, $\check q$ is even in each variable.
Then POP \eqref{eq:constrained.problem.poly.even.LP} is equivalent to
\begin{equation}\label{eq:equi.POP}
    f^\star:=\inf_{\x\in \check S}\check f\,,
\end{equation}
where
\begin{equation}
    \check S=\{\x\in\R^n\,:\,\check g_i(\x)\ge 0\,,\,i\in[m]\}\,,
\end{equation}
with $\x^{\star2}$ being an optimal solution.

Let $\theta:=1+\|\x\|_2^2$.
Denote $d_f: =  \deg(f) +1$, $d_{g_i}: =  {\deg ( {{g_i}} )} $, $i\in[m]$.

\subsection{Linear relaxations based on the extension of P\'olya's Positivstellensatz}
Consider the hierarchy of linear programs indexed by $k\in\N$: 
\begin{equation}\label{eq:dual-sdp.0.even.LP}
\begin{array}{rl}
{\tau_k^{\textup{\revise{P\'olya}}}}: = \inf\limits_\y &{L_\y}( {\theta ^k}\check f  )\\
\st&\y= {(y_\a )_{\a  \in \N^n_{2( {d_f + k})}}} \subset \R\,,\,{L_\y}( {{\theta ^k}}) = 1\,,\\
&\diag({\M_{k_i}}( {{\check g_i}\y}))\in \R_+^{b(n,k_i)}\,,\,i \in[m]\,,
\end{array}
\end{equation}
where $k_i:=k + d_f - d_{g_i}$, $i\in[m]$.
Note that $\check g_m=1$.
\begin{remark}
 The optimal value $\tau_k^{\textup{\revise{P\'olya}}}$ only depends on the subset of variables $\{y_{2\a}\,:\,\a\in\N^n_{d_f+k}\}$, i.e., the optimal value of LP \eqref{eq:dual-sdp.0.even.LP} does not change when we assign each of the other variables with any real number.
It is due to the fact that $\theta$, $\check f$, and $\check g_i$ only have nonzero coefficients associated to the monomials $\x^{2\a}$ for some $\a\in\N^n$.
\end{remark}
\begin{theorem}\label{theo:constr.theo.0.even.LP}
Let $f,g_i\in\R[\x]$, $i\in[m]$, with $g_m=1$ and $g_1:=R-\sum_{j\in[n]}x_j$ for some $R>0$.
Consider POP \eqref{eq:constrained.problem.poly.even.LP} with $S$ being defined as in \eqref{eq:semial.set.def.2.even.LP}.
For every $k\in\N$, the dual of \eqref{eq:dual-sdp.0.even.LP} reads as:
\begin{equation}\label{eq:primal.problem.0.even.LP}
\begin{array}{rl}
   {\rho _k^{\textup{\revise{P\'olya}}}}:= \sup\limits_{\lambda,\mathbf{u}_i} & \lambda\\
   \st& \lambda\in\R\,,\,\mathbf{u}_i\in\R_+^{b(n,k_i)}\,,\,i\in[m]\,,\\
   &\theta^k(\check f-\lambda)=\sum_{i\in[m]} \check g_i\v_{k_i}^\top \diag(\mathbf{u}_i) \v_{k_i}\,.
\end{array}
\end{equation}
The following statements hold:
\begin{enumerate}
\item For all $k\in\N$,
\begin{equation}\label{eq:ineq.value}
    \rho_k^{\textup{\revise{P\'olya}}}\le\rho_{k+1}^{\textup{\revise{P\'olya}}}\le f^\star\,.
\end{equation}
\item The sequence $(\rho_k^{\textup{\revise{P\'olya}}})_{k\in\N}$ converges to $f^\star$.
\item If $S$ has nonempty interior, there exist positive constants $\bar{\mathfrak{c}}$ and $\mathfrak{c}$ depending on $f,g_i$ such that 
$0\le f^\star-\rho_k^{\textup{\revise{P\'olya}}}\le \left(\frac{k}{\bar{\mathfrak{c}}}\right)^{-\frac{1}{\mathfrak{c}}}$.
\end{enumerate}
\end{theorem}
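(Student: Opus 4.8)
The plan is to derive every assertion from the representation theorem already in hand (Corollary \ref{coro:compact.even}) together with elementary linear-programming duality, exploiting that here $\check g_1 = R-\|\x\|_2^2$ is exactly the ball constraint that Corollary \ref{coro:compact.even} requires. I would first record the duality between \eqref{eq:dual-sdp.0.even.LP} and \eqref{eq:primal.problem.0.even.LP}. The key observation is that the $\a$-th diagonal entry of $\M_{k_i}(\check g_i\y)$ is $L_\y(\check g_i\x^{2\a})$, so the constraint $\diag(\M_{k_i}(\check g_i\y))\in\R_+^{b(n,k_i)}$ is simply the family of scalar inequalities $L_\y(\check g_i\x^{2\a})\ge 0$, $\a\in\N^n_{k_i}$. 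Thus \eqref{eq:dual-sdp.0.even.LP} is a finite-dimensional linear program in $\y$, and introducing a multiplier $\lambda\in\R$ for the normalization $L_\y(\theta^k)=1$ and $\mathbf{u}_i\in\R_+^{b(n,k_i)}$ for the inequalities, the coefficient of each $y_\b$ must match; this coefficient-matching is precisely the polynomial identity $\theta^k(\check f-\lambda)=\sum_{i\in[m]}\check g_i\v_{k_i}^\top\diag(\mathbf{u}_i)\v_{k_i}$, which is \eqref{eq:primal.problem.0.even.LP}.

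For statement (1), I would establish the two inequalities separately. For $\rho_k^{\textup{P\'ol}}\le f^\star$, take any feasible $(\lambda,\mathbf{u}_i)$ of \eqref{eq:primal.problem.0.even.LP} and evaluate the defining identity at $\mathbf a:=(\sqrt{x_1^\star},\dots,\sqrt{x_n^\star})$, where $x^\star$ is the given minimizer of POP \eqref{eq:constrained.problem.poly.even.LP}; since $\mathbf a^2=x^\star$ one has $\check g_i(\mathbf a)=g_i(x^\star)\ge 0$ and $\check f(\mathbf a)=f^\star$, so the right-hand side is nonnegative while $\theta(\mathbf a)^k>0$, forcing $\lambda\le f^\star$, whence $\rho_k^{\textup{P\'ol}}\le f^\star$. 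For monotonicity I would multiply a level-$k$ identity by $\theta=1+\sum_j x_j^2$: because $\theta$ is itself an SOS of monomials and a product of SOS of monomials is again an SOS of monomials, each $\theta\,\sigma_i$ (with $\sigma_i=\v_{k_i}^\top\diag(\mathbf{u}_i)\v_{k_i}$) is an SOS of monomials of degree at most $2(k_i+1)=2k_i'$, where $k_i'=(k+1)+d_f-d_{g_i}$ is the level-$(k+1)$ index. Hence $(\lambda,\mathbf{u}')$ is feasible at level $k+1$ with the same $\lambda$, giving $\rho_k^{\textup{P\'ol}}\le\rho_{k+1}^{\textup{P\'ol}}$.

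Statement (2) follows by applying Corollary \ref{coro:compact.even} to $\check f-f^\star$, which is even in each variable and nonnegative on $\check S$. For each $\varepsilon>0$ and all $k\ge K_\varepsilon$ this produces the representation \eqref{eq:reperese.compact} for $\check f-f^\star+\varepsilon$, whose SOS-of-monomials multipliers obey exactly $\deg\sigma_i\le 2k_i$; writing $\sigma_i=\v_{k_i}^\top\diag(\mathbf{u}_i)\v_{k_i}$ and absorbing the free term into the $i=m$ slot (recall $\check g_m=1$), the pair $(f^\star-\varepsilon,\mathbf{u}_i)$ is feasible for \eqref{eq:primal.problem.0.even.LP}. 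Thus $\rho_k^{\textup{P\'ol}}\ge f^\star-\varepsilon$ for all $k\ge K_\varepsilon$, and combining with statement (1) squeezes $\rho_k^{\textup{P\'ol}}\to f^\star$.

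Finally, for statement (3), I would first transfer the nonempty-interior hypothesis from $S$ to $\check S$: any open ball contained in $S\subseteq\R^n_+$ must lie in the open orthant $(0,\infty)^n$ (otherwise a nearby point would acquire a negative coordinate), and its preimage under the diffeomorphism $\x\mapsto\x^2$ of $(0,\infty)^n$ is a nonempty open subset of $\check S$. Then Corollary \ref{coro:compact.even}(2) allows $K_\varepsilon=\bar{\mathfrak{c}}\varepsilon^{-\mathfrak{c}}$, so $\rho_k^{\textup{P\'ol}}\ge f^\star-\varepsilon$ whenever $k\ge\bar{\mathfrak{c}}\varepsilon^{-\mathfrak{c}}$; choosing $\varepsilon=(k/\bar{\mathfrak{c}})^{-1/\mathfrak{c}}$ makes the threshold an equality and yields $0\le f^\star-\rho_k^{\textup{P\'ol}}\le(k/\bar{\mathfrak{c}})^{-1/\mathfrak{c}}$. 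The part demanding the most care is the bookkeeping that the degree budget of Corollary \ref{coro:compact.even} matches the matrix sizes $b(n,k_i)$ in \eqref{eq:primal.problem.0.even.LP} exactly (using $d_f=\deg f+1=\lfloor\deg\check f/2\rfloor+1$ and $d_{g_i}=\deg g_i=\lceil\deg\check g_i/2\rceil$) and the interior transfer just described; everything else reduces to weak duality and a direct invocation of the representation theorem.
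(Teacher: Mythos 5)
Your proposal is correct and follows essentially the route the paper itself indicates: LP duality for the primal--dual pair, evaluation at $\sqrt{\x^\star}$ plus multiplication by $\theta$ for statement (1), and a direct invocation of Corollary \ref{coro:compact.even} (with the interior of $S$ transferred to $\check S$ via $\x\mapsto\x^2$ on the open orthant) for statements (2) and (3), with the degree bookkeeping $d_f=\deg f+1$, $d_{g_i}=\deg g_i$ matching the matrix sizes $b(n,k_i)$. No gaps.
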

The proof of Theorem \ref{theo:constr.theo.0.even.LP} relies on Corollary \ref{coro:compact.even} and can be proved in almost
the same way as the proof of
\cite[Theorem 4]{mai2022complexity}.

\subsection{Semidefinite relaxations based on the extension of P\'olya's Positivstellensatz}
\label{sec:dense.POP}

\revise{To control the size of the matrix variables in the semidefinite relaxations based on the extension of P\'olya's Positivstellensatz, we will construct the sparsity pattern $(\mathcal A^{(s,d)}_j)_{j=1}^{b(n,d)}$ with $\mathcal A^{(s,d)}_j\subset \N^n_d$ inspired by even symmetry reduction in Proposition \ref{prop:sparsity.Pu}.
Accordingly, each $\mathcal A^{(s,d)}_j$ contains the indices of a principal submatrix of moment/localizing matrices existing in the moment relaxations. 
Therefore, $\mathcal A^{(s,d)}_j$ will be chosen such that it has no more than $s$ elements, for a given $s\in\N_{>0}$.
Furthermore, to ensure convergence for the new semidefinite relaxations associated with the sparsity pattern $(\mathcal A^{(s,d)}_j)_{j=1}^{b(n,d)}$, the construction needs to satisfy the condition $\bigcup_{j=1}^{b(n,d)} \mathcal A^{(s,d)}_j = \N^n_d$.}

To do so, we write $\N^n=\{\a_1,\a_2,\dots,\a_r,\a_{r+1},\dots\}$ such that 
\begin{equation}
    \a_1< \a_2< \dots< \a_r<\a_{r+1}<\dots\,.
\end{equation}
\revise{Here we arrange the elements of $\N^n$ in the lexicographic order.}
Let 
\begin{equation}
    W_j:=\{i\in\N\,:\,i\ge j\,,\,\a_i+\a_j\in2\N^n\}\,,\quad j\in\N_{> 0}\,.
\end{equation}
Then for all $j\in\N_{> 0}$, $W_j\ne \emptyset$ since $j\in W_j$.
For every $j\in\N$, we write $W_j:=\{i^{(j)}_1,i^{(j)}_{2},\dots\}$ such that $j=i^{(j)}_1<i^{(j)}_{2}<\dots$.
Let 
\begin{equation}
    \mathcal T_j^{(s,d)}=\{\a_{i^{(j)}_1},\dots,\a_{i^{(j)}_{s}}\}\cap \N^n_d\,,\quad j,s\in\N_{> 0}\,,\,d\in\N\,.
\end{equation}
\revise{Thus $\mathcal T_j^{(s,d)}$ has at most $s$ elements $\alpha$ such that $\alpha+\alpha_j$ has only even entries.
Since $j=i^{(j)}\in W_j$, if $\alpha_j\in \N^n_d$, then $\alpha_j\in \mathcal T_j^{(s,d)}$.}
We now construct the sequence $(\mathcal A^{(s,d)}_j)_{j=1}^{b(n,d)}$ by induction as follows:}
For every $s\in \N_{> 0}$ and $d\in\N$, define $\mathcal A^{(s,d)}_1:=\mathcal T_1^{(s,d)}$ and for $j=2,\dots,b(n,d)$, define
\begin{equation}
    \mathcal A^{(s,d)}_j:=\begin{cases}
         \mathcal T_j^{(s,d)}&\text{if } {\cal T}_j^{(s,d)}\backslash \mathcal A^{(s,d)}_l\ne \emptyset\,,\,\forall l\in[j-1]\,,\\
         \emptyset & \text{otherwise}\,.
         \end{cases}
\end{equation}
\revise{The condition ${\cal T}_j^{(s,d)}\backslash \mathcal A^{(s,d)}_l\ne \emptyset\,,\,\forall l\in[j-1]$ ensures that the set $\mathcal A^{(s,d)}_j$ is not contained in $\mathcal A^{(s,d)}_l$, for $l\in[j-1]$. This is because 
\begin{equation}
{\cal T}_j^{(s,d)}\backslash \mathcal A^{(s,d)}_l= \emptyset\Leftrightarrow {\cal T}_j^{(s,d)}\subset \mathcal A^{(s,d)}_l\,.
\end{equation}
In addition, if $\mathcal A^{(s,d)}_j\subset \mathcal A^{(s,d)}_l$, the positive semidefiniteness of the principal submatrix indexed by $\mathcal A^{(s,d)}_l$ implies the positive semidefiniteness of the principal submatrix indexed by $\mathcal A^{(s,d)}_j$.
This case leads to the duplication of the constraints in the later semidefinite relaxations.}
\revise{It is not hard to prove that with the above construction of $(\mathcal A^{(s,d)}_j)_{j=1}^{b(n,d)}$}, one has  $\bigcup_{j=1}^{b(n,d)} \mathcal A^{(s,d)}_j = \N^n_d$ and $| \mathcal A^{(s,d)}_j|\le s$.
Here $|\cdot|$ stands for the cardinality of a set.
Then the sequence
\begin{equation}
    (\a+\b)_{(\a,\b\in\mathcal A^{(s,d)}_j)}\,,\,j\in[b(n,d)]
\end{equation}
are overlapping blocks of size at most $s$ in 
$(\a+\b)_{(\a,\b\in\N^n_d)}$.
Note that $\a+\b\in2\N^n$ for all $\a,\b\in\mathcal A^{(s,d)}_j$.
\begin{example}
Consider the case of $n=d=s=2$.
The matrix  $(\a+\b)_{(\a,\b\in \N^2_2)}$
can be written explicitly as
\begin{equation}\label{eq:mat.exam}
\begin{bmatrix}
{\bf(0,0)} & (1,0) & (0,1)& {\bf(2,0)} & (1,1) & {\bf(0,2)}\\
(1,0) & {\bf(2,0)} & (1,1)& (3,0) & (2,1) & (1,2)\\
(0,1) & (1,1) & {\bf(0,2)}& (2,1) & (1,2) & (0,3)\\
{\bf(2,0)} & (3,0) & (2,1)& {\bf(4,0)} & (3,1) & {\bf(2,2)}\\
(1,1) & (2,1) & (1,2)& (3,1) & {\bf(2,2)} & (1,3)\\
{\bf(0,2)} & (1,2) & (0,3)& {\bf(2,2)} & (1,3) & {\bf(0,4)}
\end{bmatrix} \,.
\end{equation} 
In this matrix, the entries in bold belong to $2\N^2$.
Then  $W_1=\{1,4,6\}$. Since $s=2$, we get
$\mathcal A^{(2,2)}_1=\{{(0,0)} , {(2,0)}\}$. Similarly, we can obtain $\mathcal A^{(2,2)}_2=\{{(1,0)}\}$, $\mathcal A^{(2,2)}_3=\{{(0,1)}\}$, $\mathcal A^{(2,2)}_4=\{{(2,0)},{(0,2)}\}$, $\mathcal A^{(2,2)}_5=\{{(1,1)}\}$ and $\mathcal A^{(2,2)}_6=\emptyset$.
The blocks $(\a+\b)_{(\a,\b \in\mathcal A^{(2,2)}_j)}$, $j\in[5]$, are as follows:
\begin{equation}
\begin{bmatrix}
{\bf(0,0)} &  {\bf(2,0)} \\
{\bf(2,0)} & {\bf(4,0)}
\end{bmatrix}\,,\,
\begin{bmatrix}
{\bf(2,0)}
\end{bmatrix}\,,\,
\begin{bmatrix}
{\bf(0,2)}
\end{bmatrix}\,,\,
\begin{bmatrix}
{\bf(4,0)} &  {\bf(2,2)} \\
{\bf(2,2)} & {\bf(0,4)}
\end{bmatrix}\,,\,
\begin{bmatrix}
{\bf(2,2)}
\end{bmatrix}\,.
\end{equation}
\end{example}
For all $\mathcal{B}=\{\b_1,\dots,\b_r\}\subset \N^n$ such that $\b_1<\dots<\b_r$, for every $h=\sum_{\g}h_\g \x^\g\in\R[\x]$ and for every $\y=(y_\a)_{\a\in\N^n}\subset \R$, let us define 
\begin{equation}
\begin{array}{l}
     \v_{\mathcal{B}}:=\begin{bmatrix} \x^{\b_1}\\
    \dots\\
    \x^{\b_r}
    \end{bmatrix}\quad\text{and}\quad \M_{\mathcal{B}}(h\y):=(\sum_{\g}h_\g y_{\g+\b_i+\b_j})_{i,j\in[r]}\,.
\end{array}
\end{equation}

Consider the hierarchy of semidefinite programs indexed by  $s\in\N_{> 0}$ and $k\in\N$: 
\begin{equation}\label{eq:dual-sdp.0.even}
\begin{array}{rl}
{\tau_{k,s}^\textup{\revise{P\'olya}}}: = \inf\limits_\y &{L_\y}( {\theta ^k}\check f  )\\
\st&\y= {(y_\a )_{\a  \in \N^n_{2( {d_f + k})}}} \subset \R\,,\,{L_\y}( {{\theta ^k}}) = 1\,,\\[5pt]
&\M_{\cA^{(s,k_i)}_j}(\check g_i\y)\succeq 0\,,\,j\in[b(n,k_i)]\,,\,i \in[m]\,,
\end{array}
\end{equation}
where $k_i:=k + d_f - d_{g_i}$, $i\in[m]$. Here $\check g_m=1$.
\begin{remark}
\label{re:special.case}
If we assume that $\theta=1$, then  \eqref{eq:dual-sdp.0.even} becomes a moment relaxation based on Putinar's Positivstellensatz for POP \eqref{eq:new.pop}.
Here each constraint $\M_{k_i}(\check g_i\y)\succeq 0$ is replaced by the constraint  $\M_{\cA^{(s,k_i)}_j}(\check g_i\y)\succeq 0$.
If $s$ is large enough, \eqref{eq:dual-sdp.0.even} corresponds to an SDP relaxation obtained after exploiting term sparsity (see \cite{wang2021tssos}).
\end{remark}
\begin{theorem}\label{theo:constr.theo.0.even}
Let $f,g_i\in\R[\x]$, $i\in[m]$, with $g_m=1$ and $g_1:=R-\sum_{j\in[n]}x_j$ for some $R>0$.
Consider POP \eqref{eq:constrained.problem.poly.even.LP} with $S$ being defined as in \eqref{eq:semial.set.def.2.even.LP}.
For every $s\in\N_{> 0}$ and for every $k\in\N$, the dual of \eqref{eq:dual-sdp.0.even} reads as:
\begin{equation}\label{eq:primal.problem.0.even}
\begin{array}{rl}
   {\rho _{k,s}^\textup{\revise{P\'olya}}}:= \sup\limits_{\lambda,\mathbf{G}_{ij}} & \lambda\\
   \st& \lambda\in\R\,,\,\mathbf{G}_{ij}\succeq 0\,,\,j\in[b(n,k_i)]\,,\,i\in[m]\,,\\[5pt]
   &\theta^k(\check f-\lambda)=\sum_{i\in[m]} \check g_i\big(\sum_{j\in[b(n,k_i)]}\v_{\cA^{(s,k_i)}_j}^\top \mathbf{G}_{ij} \v_{\cA^{(s,k_i)}_j}\big)\,.
\end{array}
\end{equation}
The following statements hold:
\begin{enumerate}
\item For all $k\in\N$ and for every $s\in\N_{> 0}$,
$\rho_{k}^\textup{\revise{P\'olya}}=\rho_{k,1}^\textup{\revise{P\'olya}}\le\rho_{k,s}^\textup{\revise{P\'olya}}\le f^\star$.
\item For every $s\in\N_{> 0}$, the sequence $(\rho_{k,s}^\textup{\revise{P\'olya}})_{k\in\N}$ converges to $f^\star$.
\item If $S$ has nonempty interior, there exist positive constants $\bar{\mathfrak{c}}$ and $\mathfrak{c}$ depending on $f,g_i$ such that for every $s\in\N_{> 0}$ and for every $k\in\N$,
$0\le f^\star-\rho_{k,s}^\textup{\revise{P\'olya}}\le \left(\frac{k}{\bar{\mathfrak{c}}}\right)^{-\frac{1}{\mathfrak{c}}}$.
\item If $S$ has nonempty interior, for every $k\in\N$ and for every $s\in\N_{> 0}$, strong duality holds for the  primal-dual problems \eqref{eq:dual-sdp.0.even}-\eqref{eq:primal.problem.0.even}.
\end{enumerate}
\end{theorem}
\begin{proof}
It is not hard to prove the first statement. The second and third one are due to the first statement of Theorem \ref{theo:constr.theo.0.even.LP}.
The final statement is proved similarly to the third statement of \cite[Theorem 4]{mai2022complexity}.
\end{proof}
\begin{remark}\label{rem:idea.replace.SOS}
In order to construct the semidefinite relaxation \eqref{eq:primal.problem.0.even}, the SOS of monomials in the linear relaxation \eqref{eq:primal.problem.0.even.LP} are replaced by a sum of several SOS polynomials \revise{with associated Gram matrices of small size}. 
This idea is inspired by  \cite{weisser2018sparse} where the authors replace the first nonnegative scalar by an SOS polynomial in the linear relaxation based on Krivine-Stengle's Positivstellensatz.
\end{remark}
\revise{
\begin{remark}
SDP \eqref{eq:primal.problem.0.even} with value ${\rho _{k,s}^\textup{P\'olya}}$ has at most $\sum_{i=1}^m b(n,k_i)$ matrix variables with maximal block size $s$. 
Its number of affine equality constraints is $b(n,k+d_f)$.
When $s=1$, \eqref{eq:primal.problem.0.even} becomes LP \eqref{eq:primal.problem.0.even.LP} with value ${\rho _k^{\textup{\revise{P\'olya}}}}$. This latter program has the same number of equality constraints than SDP~ \eqref{eq:primal.problem.0.even} and $\sum_{i=1}^m b(n,k_i)$ nonnegative variables.
\end{remark}
}
\begin{remark}
\label{re:not.monomonic}
At fixed $s\in\N_{> 0}$, the sequence $(\rho_{k,s}^\textup{\revise{P\'olya}})_{k\in\N}$  may not be monotonic w.r.t.~$k$, and similarly at fixed $k\in\N$.
\end{remark}
\begin{example}\label{exam:AMGM}(AM-GM inequality)
Consider the case where $n=3$, $f=x_1+x_2+x_3$ and $S=\{\x\in\R^3\,:\,x_j\ge 0\,,\,j\in[3]\,,\,x_1x_2x_3 -1\ge 0\,,\,3-x_1-x_2-x_3\ge 0\}$.
Using AM-GM inequality, we have
\begin{equation}
    f(\x)\ge 3(x_1x_2x_3)^{1/3}\ge 3\,,\quad\forall x\in S\,,
\end{equation}
yielding $f^\star=3$.
\revise{We use {\tt JuMP} \cite{lubin2023jump} to model SDP \eqref{eq:dual-sdp.0.even} and}  solve it with {\tt Mosek}.
The corresponding  numerical results are reported in Table \ref{tab:AMGM}.
\begin{table}
    \caption{Numerical values (in the first subtable) and computing time (in the second subtable) for  $\tau_{k,s}^\textup{\revise{P\'olya}}$ in Example \ref{exam:AMGM}}
    \label{tab:AMGM}
    \small
\begin{center}
   \begin{tabular}{|c|c|c|c|c|c|c|c|c|}
        \hline
\diagbox[width=0.8cm]{$k$}{$s$}&1 & 2& 3 & 4& 5&6 &7 &8 \\
\hline
0&  0.0000 & 0.0000& 0.0000 & 0.0000& 0.0000 & 0.0000 & 0.0000 & 0.0000 \\
\hline
1&  0.0000 & 0.0000& 0.0000 & 0.0000& 0.0000 & 0.0000 & 0.0000 & 0.0000 \\
\hline
2&0.0000 &0.0000 & 0.4999 & \textbf{2.9999}& \textbf{2.9999} & \textbf{2.9999} & \textbf{2.9999} &\textbf{2.9999}\\
\hline
3& 1.0000 & 0.9999 & 0.9999 & 2.7454 & 2.8368 & 2.8383 & \textbf{2.9999} & \textbf{2.9999}\\
\hline
4& 1.4399 & 1.4999 & 1.4999& 1.4999& 1.4999 & 1.4999 & \textbf{2.9999}& \textbf{2.9999}  \\
\hline
5 & 1.8615 & 1.9961 & 1.9999& 1.9999 & 1.9999 & 1.9999 & \textbf{2.9999} & \textbf{2.9999}  \\
\hline
6& 2.1999 & 2.4526 & 2.4998 & 2.4999 & 2.4999 & 2.4999 & 2.4999& 2.4999\\
\hline
7& 2.3971 & 2.8090 & 2.9633 & 2.9950 & 2.9996 & \textbf{2.9999} & \textbf{2.9999} & \textbf{2.9999}\\
\hline
8& 2.4109 & 2.9022 & 2.9989 & \textbf{2.9999} & \textbf{2.9999} & \textbf{2.9999} &\textbf{2.9999} & \textbf{2.9999}\\
\hline
9& 2.5161 & 2.9137 & 2.9997 & \textbf{2.9999} &\textbf{2.9999} & \textbf{2.9999}&\textbf{2.9999} & \textbf{2.9999}\\
\hline
10& 2.5896 & 2.9520 & 2.9993 & \textbf{2.9999} &\textbf{2.9999} & \textbf{2.9999}&\textbf{2.9999} & \textbf{2.9999}\\
\hline
11& 2.6210 & 2.9607 & 2.9983 & \textbf{2.9999} &\textbf{2.9999} & \textbf{2.9999}&\textbf{2.9999} & \textbf{2.9999}\\
\hline
12 & 2.6937 & 2.9615 & 2.9973 & 2.9998 & \textbf{2.9999}&\textbf{2.9999} & \textbf{2.9999}&\textbf{2.9999} \\
\hline
13 & 2.7330 & 2.9662 & 2.9977 & \textbf{2.9999} & \textbf{2.9999}&\textbf{2.9999} & \textbf{2.9999}&\textbf{2.9999} \\
\hline
14 & 2.7390 & 2.9687 & 2.9974 & \textbf{2.9999} & \textbf{2.9999}&\textbf{2.9999} & \textbf{2.9999}&\textbf{2.9999} \\
\hline
15 & 2.3704 & 2.9697 & 2.9972 & 2.9998 & \textbf{2.9999} & \textbf{2.9999} & \textbf{2.9999} & \textbf{2.9999}  \\
\hline
16 & 2.4000 & 2.9710 & 2.9971 & 2.9997 & \textbf{2.9999} &\textbf{2.9999} & \textbf{2.9999} & \textbf{2.9999}  \\
\hline
17 & 1.5030 & 2.9723 & 2.9968 & \textbf{2.9999} & \textbf{2.9999} & \textbf{2.9999} & \textbf{2.9999} & \textbf{2.9999}\\
\hline
18 & 0.5833 & 2.9732 & 2.9966 & 2.9996 & \textbf{2.9999} & \textbf{2.9999} & \textbf{2.9999} & \textbf{2.9999}\\
\hline
19 & 0.8121 & 0.0000 & 0.0000 & 2.9995 & \textbf{2.9999} & \textbf{2.9999} & \textbf{2.9999} & \textbf{2.9999}\\
\hline
20 & 0.7457 & 0.0000 & 0.0000 & 2.9994 & \textbf{2.9999} & \textbf{2.9999} & \textbf{2.9999} &\textbf{2.9999}  \\
\hline
\end{tabular} 

\begin{tabular}{|c|c|c|c|c|c|c|c|c|}
        \hline
\diagbox[width=0.8cm]{$k$}{$s$}&1 & 2& 3 & 4& 5&6 &7 &8 \\
\hline
0&  1.1 & 1.3& 1.0 & 1.0& 1.0 & 1.1 & 1.0 & 1.0 \\
\hline
1&  1.1 & 1.1& 1.1 & 1.2& 1.1 & 1.1 & 1.1 & 1.1 \\
\hline
2&1.1 &1.1 & 1.1 & 1.1& 1.1 & 1.1 & 1.1 &1.1\\
\hline
3& 1.1 & 1.1 & 1.1 & 1.1 & 1.1 & 1.1 & 1.5 & 1.1\\
\hline
4& 1.1 & 1.2 & 1.1& 1.1& 1.1 & 1.1 & 1.2& 1.2  \\
\hline
5 & 1.1 & 1.1 & 1.1& 1.2 & 1.2 & 1.3 &1.2 & 1.3  \\
\hline
6& 1.2 & 1.2 & 1.2 & 1.3 & 1.3 & 1.5 & 1.3& 1.2\\
\hline
7& 1.4 & 1.2 & 1.2 & 1.4 & 1.4 & 1.6 & 1.6 & 1.4\\
\hline
8& 1.2 & 1.2 & 1.3 & 1.3 & 1.5 & 1.4 &1.8 & 1.9\\
\hline
9& 1.3 & 1.2 & 1.3 & 1.4 &1.5 & 1.7&1.7 & 1.6\\
\hline
10& 1.3 & 1.3 & 1.5 & 1.8 &1.7 & 1.9&2.2 & 1.9\\
\hline
11& 1.3 & 1.5 & 1.4 & 1.9 &1.9 & 2.0&2.2 & 2.3\\
\hline
12 & 1.3 & 1.7 & 1.8 & 2.1 & 2.2&2.3 & 2.7&2.6\\
\hline
13 & 1.4 & 1.6 & 1.9 & 2.2 & 2.3&2.4 & 2.9&3.4 \\
\hline
14 & 1.2 & 1.5 & 2.0 & 2.5 & 2.6&2.9 & 3.5&3.8 \\
\hline
15 & 1.2 & 1.6 & 2.3 & 2.8 & 3.1 & 3.5 & 4.2 & 5.0  \\
\hline
16 & 1.3 & 2.5 & 2.8 & 3.5 & 3.9 &4.4 & 5.9 & 7.1  \\
\hline
17 & 1.4 & 2.3 & 3.8 & 5.3& 6.2 & 7.2 & 7.9 & 9.7\\
\hline
18 & 1.6 & 2.9 & 5.2 & 7.2 & 7.9 & 9.7 & 10.6 & 12.3\\
\hline
19 & 1.5 & 2.7 & 4.1 & 9.8 & 13.3 & 14.0 & 14.1 & 16.6\\
\hline
20 & 1.4 & 3.4 & 4.8 & 12.6 & 16.5 & 20.8 & 24.5 &27.2  \\
\hline
\end{tabular} 
\end{center}
\end{table}
The table displays  $\tau_{2,4}^\textup{\revise{P\'olya}} \revise{\approx} 2.9999$  which is very close to $f^\star$.
However, $\tau_{17}^{\textup{\revise{P\'olya}}}=\tau_{17,1}^\textup{\revise{P\'olya}}$ \revise{has numerical value} $1.5030$, which is smaller than \revise{the numerical value} 2.4000 of $\tau_{16}^{\textup{\revise{P\'olya}}}=\tau_{16,1}^\textup{\revise{P\'olya}}$.
This violates the theoretical inequality \eqref{eq:ineq.value}. 
\revise{This is because the numerical values reported by the solver do not approximate accurately the true optimal value $\tau_{k,s}^\textup{\revise{P\'olya}}$ of the relaxations in this case.  \revise{The solver neither reports convergence to the optimum nor satisfaction} of the KKT conditions.}
The underlying reason is that the matrix $A$ used to define the convex polytope $P=\{\x\in\R^n\,:\,\x\ge 0\,,\,\mathbf A\x\le \mathbf b\}$ in the equivalent form $\min_{\x\in P} \mathbf c^\top \x$
of LP \eqref{eq:dual-sdp.0.even.LP} is ill-conditioned, and the solver
is not able to  accurately solve the LP corresponding to $\tau^\textup{\revise{P\'olya}}_{17}$.
\revise{Note that the LP could be solved exactly with a solver based on rational arithmetic in such unfavorable situations  but the underlying computational cost would be rapidly prohibitive.} 
\end{example}

\revise{
\begin{remark}
In practice, LP relaxations \eqref{eq:primal.problem.0.even.LP} with the value $\tau_{k}^\textup{P\'olya}$ may exhibit a much slower convergence rate towards $f^\star$ compared to SDP \eqref{eq:primal.problem.0.even} with the value ${\rho_{k,s}^\textup{P\'olya}}$ (as shown in Example \eqref{exam:AMGM}). Therefore, for $\tau_{k}^\textup{P\'olya}$ to serve as a good approximation of $f^\star$, the relaxation order $k$ must be set to a large value.
\revise{As a result, the matrix that defines the convex polytope of the LP relaxation \eqref{eq:primal.problem.0.even.LP} becomes very large and ill-conditioned, 
which in turn leads to less accurate computations of $\tau_{k}^\textup{P\'olya}$. }
Unlike the scenario described above, SDP \eqref{eq:primal.problem.0.even} with a small and appropriately chosen relaxation order $k$ and a factor width bound $s$, has a moderate size and an optimal value ${\rho_{k,s}^\textup{P\'olya}}$ that closely approximates $f^\star$. 
Consequently, solvers can easily provide accurate approximations for ${\rho_{k,s}^\textup{P\'olya}}$ that are close to $f^\star$.
\end{remark}
}
\revise{
\begin{remark}
If we don't use even symmetry, our original idea provides a block-diagonal structure $(z_{\a+\b})_{(\a,\b\in\mathcal A_j)}$, $j\in[r]$, with $\bigcup_{j\in[r]}\mathcal A_j=\N^n_d$ for each matrix variable $(z_{\a+\b})_{\a,\b\in\N^n_d}$ in the following semidefinite relaxations based on Putinar--Vasilescu's Positivstellensatz for POP \eqref{eq:equi.POP}:
\begin{equation}
\begin{array}{rl}
\inf\limits_\y &{L_\y}( {\theta ^k}\check f  )\\
\st&\y= {(y_\a )_{\a  \in \N^n_{2( {d_f + k})}}} \subset \R\,,\,{L_\y}( {{\theta ^k}}) = 1\,,\\
&\M_{k_i}( {{\check g_i}\y})\succeq 0\,,\,i \in[m]\,,
\end{array}
\end{equation}
However, this selection is not optimal as it does not make use of \revise{the even property of input polynomials $\check f, \check g_i$.
By using even symmetry, our method provides} more effective block-diagonal structures since all variables without even indices are removed.
\end{remark}
}

\revise{
In the following example, we \revise{compare our method with the} hierarchy based on Putinar's Positivstellensatz using even/sign symmetry:
}
\begin{example}
\revise{
Consider the case where $f=-\|x-a\|_2^2$ with $a=(\frac{1}{n},\dots,\frac{1}{n})$, $S=\{x\in\R^n:x_j\ge 0\,,\,\sum_{j=1}^n\le 1\}$. 
Then the optimal value of POP \eqref{eq:constrained.problem.poly.even.LP} is $f^\star=-\frac{n-1}{n}$.
We solve SDP relaxations for the equivalent POP \eqref{eq:equi.POP} by using our method \eqref{eq:primal.problem.0.even} and the standard method based on Putinar's Positivstellensatz that exploits even symmetry. To run the latter in practice, we rely on the TSSOS software library \revise{and
the corresponding  numerical results are reported in Table \ref{tab:comparison.evensymmetry}.}
}
\begin{table}

    \caption{\small A numerical comparison of our method based on the extension of P\'olya's Positivstellensatz, factor width and even symmetry  with the standard method based on Putinar's Positivstellensatz and even symmetry}
    \label{tab:comparison.evensymmetry}
\small
\begin{center}
\begin{tabular}{|c|c|c|c|c|c|c|c|c|c|c|c|c|c|c|c|c|c|c|c|}
        \hline
\multirow{2}{*}{Id}  &
\multicolumn{1}{c|}{POP size}&
\multicolumn{3}{c|}{Putinar $+$ even symmetry}&      \multicolumn{4}{c|}{P\'olya $+$ factor width $+$  even symmetry}\\ \cline{2-9}
&
{$n$}&
{$k$} & {val} & {time} & {$k$} & {$s$}& {val} & {time}\\
\hline
1 & 100 & 2&-0.989999  & 41 & 0 & 5 & -0.990000 & 12\\
2 & 200 & 2&-0.995000  & 911 & 0 & 5 & -0.995000 & 291\\
\hline
\end{tabular}
\begin{tabular}{|c|c|c|c|c|c|c|c|c|c|c|c|c|c|c|c|c|}
        \hline
\multirow{2}{*}{Id}&
\multicolumn{4}{c|}{Putinar $+$ even symmetry}&      \multicolumn{4}{c|}{P\'olya $+$ factor width $+$  even symmetry}\\ \cline{2-9}
&
nmat & msize & nscal & naff &nmat & msize & nscal & naff\\
\hline
1 & 1 & 101 & 5152 & 5151 & 97 & 5 &5152 &5151\\
2 & 1 & 201 & 20302 & 20301 & 197 & 5 &20302 &20301\\
\hline
\end{tabular}
\end{center}
\end{table}
\revise{Overall \revise{they suggest} that our method provides SDP relaxations with smaller block sizes, which can be solved more efficiently than the standard method exploiting even symmetry.}
\end{example}

\subsection{Obtaining an optimal solution} 
\label{sec:obtain.sol.dense}
A real sequence $( y_\a)_{\a  \in \N^n_t}$ has a representing measure if there exists a finite Borel measure $\mu$ such that $y_\a  = \int_{\R^n} {\x^\a d\mu(\x)}$ is satisfied for every $\a  \in {\N^n_t}$.

Next, we discuss about the extraction of an optimal solution $\x^\star$ of  POP \eqref{eq:constrained.problem.poly.even.LP} from the optimal solution $\y=(y_\a)_{\a\in\N^n_{2(d_f+k)}}$ of the semidefinite relaxations \eqref{eq:dual-sdp.0.even}.
\begin{remark}
 A naive idea is to define the new sequence of moments $\mathbf{u}=(u_\a)_{\a\in\N^n_{2(d_f+k)}}$ given by $u_\a:=y_{\a}^2$, for $\a\in\N^n_{2(d_f+k)}$.
Obviously, if $\y$ has a representing Dirac measure $\delta_{\z^\star}$, then $\mathbf{u}$ has a representing Dirac measure $\delta_{\z^{\star2}}$.
In this case, we take $\x^\star:=\z^{\star2}$.
However, there is no guarantee that $\mathbf{u}$ has a representing measure in general even if  $\y$ has one.
\end{remark}

Based on \cite[Remark 3.4]{mai2023hierarchy}, we  use  the heuristic  extraction  algorithm presented in Algorithm \ref{alg:extract.sol}.
\begin{algorithm}
\caption{Extraction algorithm for POPs on the nonnegative orthant}
\label{alg:extract.sol}
\textbf{Input:} precision parameter $\varepsilon>0$ and an optimal solution $(\lambda,\mathbf{G}_{ij})$ of SDP \eqref{eq:primal.problem.0.even}.\\
\textbf{Output:}  an optimal solution $\x^\star$ of POP \eqref{eq:constrained.problem.poly.even.LP}.
\begin{algorithmic}[1]
    \State For $j\in[b(n,k_m)]$, let $\bar{\mathbf{G}}_j=(w^{(j)}_{\mathbf p\mathbf q})_{\mathbf p,\mathbf q\in \N^n_{k_m}}$ such that $(w^{(j)}_{\mathbf p\mathbf q})_{\mathbf p,\mathbf q\in \cA^{(s,k_m)}_j}=\mathbf{G}_{j}$ and $w^{(j)}_{\mathbf p\mathbf q}=0$ if $(\mathbf p,\mathbf q)\notin (\cA^{(s,k_m)}_j)^2$.
    Then $\bar{\mathbf{G}}_j\succeq 0$ and
    \begin{equation}
    \begin{array}{r}
         \v_{\N^n_{k_m}}^\top \bar{\mathbf{G}}_j \v_{\N^n_{k_m}}=\v_{\cA^{(s,k_m)}_j}^\top \mathbf{G}_{j} \v_{\cA^{(s,k_m)}_j}\,;
    \end{array}
    \end{equation}
    \State Let $\mathbf{G}:=\sum_{j\in[b(n,k_m)]}\bar{\mathbf{G}}_j$. Then $\mathbf{G}$ is the Gram matrix corresponding to $\sigma_m$ in the SOS decomposition
    \begin{equation}
    \begin{array}{rl}
         \theta^k(\check f-\lambda)=\sum_{i\in[m]} \check g_i \sigma_i\,,
    \end{array}
    \end{equation}
    where $\sigma_i$ are SOS polynomials and $\check g_m=1$;
    \State Obtain an atom $ \z^\star\in\R^n$ by using the extraction algorithm of Henrion and Lasserre in  \cite{henrion2005detecting}, where the matrix $\mathbf V$ in \cite[(6)]{henrion2005detecting} is taken such that the columns of $\mathbf V$ form a basis of the null space $\{ \mathbf{u}\in\R^{\omega_k}\,:\, \mathbf{G} \mathbf{u}=0\}$;   
    \State Verify that $ \z^\star$ is  an approximate optimal solution of POP \eqref{eq:equi.POP} by checking the following inequalities:
    \begin{equation}\label{eq:check.sol}
        |\check f( \z^\star)-\lambda|\le \varepsilon \|\check f\|_{\max}\text{ and }\check g_i( \z^\star)\ge -\varepsilon \|\check g_i\|_{\max}\,,\, i\in [m]\,,
    \end{equation}
where  $\|q\|_{\max}:=\max_\a |q_\a|$ for any $q\in\R[\x]$.
\State If the inequalities \eqref{eq:check.sol} hold, set $\x^\star:=\z^{\star 2}$.
\end{algorithmic}
\end{algorithm}


\section{Numerical experiments}
\label{sec:benchmark.polya}
In this section we report results of numerical experiments obtained by solving the Moment-SOS relaxations of some random and nonrandom instances of POP \eqref{eq:constrained.problem.poly.even}. 
Notice that our relaxations from Section \ref{sec:compact.pop} are to deal with dense POPs while the ones from Section \ref{sec:sparse.POP.nonneg} are for POPs with correlative sparsity.

For numerical comparison purposes, recall the \revise{semidefinite} relaxation based on Putinar's Positivstellensatz for solving POP \eqref{eq:constrained.problem.poly.even} indexed by $k\in\N$:
\begin{equation}\label{eq:sdp.Pu}
\begin{array}{rl}
\tau_k^{\textup{\revise{Putinar}}}: = \inf\limits_\y &{L_\y}(  f  )\\
\st&\y= {(y_\a )_{\a  \in \N^n_{2k}}} \subset \R\,,\,y_\mathbf{0} = 1\,,\\[5pt]
&\M_{k-\lceil g_i \rceil}( g_i\y)\succeq 0\,,\,i \in[\bar m]\,.
\end{array}
\end{equation}
Here $\bar m:=m+n$, $g_{m+j}:=x_j$, $j\in [n]$, and 
$g_m:=1$. 
As shown by Baldi and Mourrain \cite{baldi2023effective}, the sequence $(\tau_k^{\textup{\revise{Putinar}}})_{k\in\N}$ converges to $f^\star$ with the rate of at least $\mathcal{O}(\varepsilon^{-\mathfrak{c}})$ when POP \eqref{eq:constrained.problem.poly.even} has a ball constraint, e.g., $g_1:=R-\|\x\|_2^2$ for some $R>0$.
If $g_1=R-\sum_{j\in[n]}x_j$ for some $R>0$, then $(\tau_k^{\textup{\revise{Putinar}}})_{k\in\N}$ still converges to $f^\star$ due to Jacobi-Prestel \cite[Theorem 4.2]{jacobi2001distinguished} (see also \cite[Theorem 1 (JP)]{averkov2013constructive}).
\begin{remark}
If we assume that $g_1:=R-\sum_{j\in[n]}x_j$ for some $R>0$, SDP \eqref{eq:sdp.Pu}  may be unbounded when $k$ is too small since its variable $\y$ is possibly unbounded.
This issue occurs later on, see, e.g., Section \ref{sec:densePOP.exp}.
However, if we assume that $g_1:=R-\|\x\|_2^2$ for some $R>0$, then SDP \eqref{eq:sdp.Pu}  is feasible for any order $k\ge 1$ (see Section \ref{sec:PMSV}).
\end{remark}

Recall the \revise{semidefinite} relaxation based on the sparse version of Putinar's Positivstellensatz for solving POP \eqref{eq:constrained.problem.poly.even} (under Assumption \ref{ass:sparsePOP}) indexed by $k\in\N$:
\begin{equation}\label{eq:sdp.Pu.sp}
\begin{array}{rl}
\tau_k^{\textup{\revise{SparsePutinar}}}: = \inf\limits_\y &{L_\y}(  f  )\\
\st&\y= {(y_\a )_{\a  \in \N^n_{2k}}} \subset \R\,,\,y_\mathbf{0} = 1\,,\\[5pt]
&\M_{\N^{I_c}_{k-\lceil g_i \rceil}}( g_i\y)\succeq 0\,,\,i \in J_c\,,\,c\in[p]\,.
\end{array}
\end{equation}
Here $I_c\subset[n]$, $\bar m:=m+n$, $g_{m+j}:=x_j$, $j\in [n]$, 
$g_m:=1$ and $m\in J_c\subset [\bar m]$, for $c\in[p]$.

The experiments are performed in Julia 1.3.1.
\revise{We use {\tt JuMP} \cite{lubin2023jump} to model our semidefinite relaxations.}
 We rely on  {\tt TSSOS} \cite{wang2021tssos}  to   solve the Moment-SOS relaxations of sparse POPs.

The implementation of our method is available online via the link:
\begin{center}
    \href{https://github.com/maihoanganh/InterRelax}{{\bf https://github.com/maihoanganh/InterRelax}}.
\end{center}

We use a desktop computer with an Intel(R) Core(TM) i7-8665U CPU @ 1.9GHz $\times$ 8 and 31.2 GB of RAM. 
The notation for the numerical results is given in Table \ref{tab:nontation.ineq}.
\begin{table}
    \caption{\small The notation}
    \label{tab:nontation.ineq}
\small
\begin{center}
\begin{tabular}{|m{3.2cm}|m{8.1cm}|}
\hline 
Pb &the ordinal number of a POP instance\\
\hline 
Id &the ordinal number of an SDP instance\\
\hline
$n$ & the number of nonnegative variables in POP \eqref{eq:constrained.problem.poly.even}\\
\hline
$m_{\text{ineq}}$& the number of inequality constraints of the form $g_i\ge 0$ in POP \eqref{eq:constrained.problem.poly.even}\\
\hline
$m_{\text{eq}}$& the number of equality constraints of the form $g_i=0$ in POP \eqref{eq:constrained.problem.poly.even}\\
\hline
\revise{Putinar SDP \eqref{eq:sdp.Pu}} & the SDP relaxation based on Putinar's  Positivstellensatz \eqref{eq:sdp.Pu} modeled by {\tt TSSOS} and solved by {\tt Mosek} 9.1\\
\hline
\revise{P\'olya SDP \eqref{eq:primal.problem.0.even}} & the SDP relaxation based on the extension of P\'olya's  Positivstellensatz \eqref{eq:primal.problem.0.even} modeled by  our software {\tt InterRelax} and solved by {\tt Mosek} 9.1 \\
\hline
\revise{Sparse Putinar SDP \eqref{eq:sdp.Pu.sp}}  & the SDP relaxation for a sparse POP based on Putinar's  Positivstellensatz \eqref{eq:sdp.Pu.sp} modeled by  {\tt TSSOS} and solved by {\tt Mosek} 9.1 \\
\hline
\revise{Sparse P\'olya SDP \eqref{eq:primal.problem.0.even.sparse}}  & the SDP relaxation for a sparse POP based on the extension of P\'olya's  Positivstellensatz \eqref{eq:primal.problem.0.even.sparse} modeled by  our software {\tt InterRelax} and solved by {\tt Mosek} 9.1\\
\hline
$k$&the relaxation order\\
\hline
$s$&the factor width upper bound used in SDP  
\\ 
\hline
$d$& the sparsity order of the SDP relaxation \eqref{eq:primal.problem.0.even.sparse}\\
\hline
nmat&the number of matrix variables of an SDP\\
\hline
msize& the largest size of matrix variables  of an SDP\\
\hline
nscal&the number of scalar variables of an SDP\\
\hline
naff&the number of affine constraints of an SDP\\
\hline
val& the value returned by the SDP relaxation\\
\hline
$^*$& there exists at least one optimal solution of the POP, which can be extracted by Algorithm \ref{alg:extract.sol} \\
\hline
time & the running time in seconds (including  modeling and solving time)\\
\hline
$\infty$ & the SDP relaxation is unbounded or infeasible\\
\hline 
$-$& the calculation runs out of space
\\
\hline
\end{tabular}
\end{center}
\end{table}

\revise{We report numerical values when the solver {\tt Mosek} indicates optimality (meaning KKT conditions are satisfied) or when it reports primal feasibility ({\tt Mosek} displays {\tt Primal status = FEASIBLE\_POINT}). In the latter case, the primal SDP, which is the SOS relaxation, yields a value at a feasible point. This value still serves as a lower bound for the optimal value of the original POP, as we maximize it within the SOS relaxation.}

\subsection{Dense QCQPs}\label{sec:densePOP.exp}
\paragraph{Test problems:} We construct randomly generated dense quadratically constrained quadratic programs (QCQPs) in the form \eqref{eq:constrained.problem.poly.even}-\eqref{eq:semial.set.def.2.even} as follows:
\begin{enumerate}
    \item Take $\mathbf{a}$ in the simplex
    \begin{equation}
    \begin{array}{l}
         \Delta_n:=\{\x\in\R^n\,:\,x_j\ge 0\,,\,j\in[n]\,,\,\sum_{j\in[n]}x_j\le 1\}
    \end{array}
    \end{equation}
    w.r.t. the uniform distribution.
    \item Let $g_1:=1-\sum_{j\in[n]}x_j$ and $g_2:=1$.
    \item Take every coefficient of $f$ and $g_i$, $i=2,\dots,m$, in $(-1,1)$ w.r.t. the uniform distribution.
    \item Update $g_i(\x):=g_i(\x)-g_i(\mathbf{a})+0.125$, for $i=2,\dots,m_{\text{ineq}}$.
    \item Update $g_{i+m_{\text{ineq}}}(\x):=g_{i+m_{\text{ineq}}}(\x)-g_{i+m_{\text{ineq}}}(\mathbf{a})$ and set $g_{i+m_{\text{eq}}+m_{\text{ineq}}}=-g_{i+m_{\text{ineq}}}$, for $i\in[m_{\text{eq}}]$.
\end{enumerate}
Here $m=m_{\text{ineq}}+2 m_{\text{eq}}$ with $m_{\text{ineq}}$ (resp. $m_{\text{eq}}$) being the number of inequality (resp. equality) constraints except the nonnegative constraints $x_j\ge 0$.
If $m_{\text{ineq}}=2$ and $m_{\text{eq}}=0$, we obtain the case of the minimization of a polynomial on the simplex $\Delta_n$.
The point $a$ is a feasible solution of POP \eqref{eq:constrained.problem.poly.even}.

The numerical results are displayed in Table \ref{tab:QCQP.on.unit.ball.nonneg}.

\begin{table}
    \caption{\small Numerical results for randomly generated dense QCQPs.}
    \label{tab:QCQP.on.unit.ball.nonneg}
\small
\begin{center}
\begin{tabular}{|c|c|c|c|c|c|c|c|c|c|c|c|c|c|c|c|c|c|c|}
        \hline
\multirow{2}{*}{Id} &\multirow{2}{*}{Pb}    &
\multicolumn{3}{c|}{POP size}&
\multicolumn{3}{c|}{\revise{Putinar SDP \eqref{eq:sdp.Pu}}}&      \multicolumn{4}{c|}{\revise{P\'olya SDP \eqref{eq:primal.problem.0.even}}}\\ \cline{3-12}
&&{$n$}&{$m_{\text{ineq}}$}& {$m_{\text{eq}}$}& {$k$} & {val} & {time} &{$k$} & {$s$}& {val} & {time}\\
\hline
\hline
1 &\multirow{2}{*}{1}& \multirow{2}{*}{20} & \multirow{2}{*}{2} & \multirow{2}{*}{0} & 1 & $\infty$  & 0.0&  \multirow{2}{*}{0} & \multirow{2}{*}{17} & \multirow{2}{*}{\bf -1.99792$^*$} & \multirow{2}{*}{1}\\
2& &  &  & &  2 & {\bf -1.99792} & 92&   & &  & \\
\hline
\hline
3 & \multirow{2}{*}{2}& \multirow{2}{*}{20} & \multirow{2}{*}{5} & \multirow{2}{*}{0} & 1 & $\infty$ & 0.03&  \multirow{2}{*}{1} & \multirow{2}{*}{20} & \multirow{2}{*}{\bf -0.265883$^*$} & \multirow{2}{*}{9} \\
4 & &  &  & &  2 & -0.350601 & 342 &  &  & & \\
\hline
\hline
5 & \multirow{2}{*}{3}& \multirow{2}{*}{20} & \multirow{2}{*}{5} & \multirow{2}{*}{4} & 1 & $\infty$ & 0.02&  \multirow{2}{*}{1} & \multirow{2}{*}{7} & \multirow{2}{*}{ -0.429442} & \multirow{2}{*}{5}\\
6 & & &  & &  2 & -0.431543 & 356&  & & & \\
\hline
\hline
7 &\multirow{2}{*}{4} &\multirow{2}{*}{30} & \multirow{2}{*}{2} & \multirow{2}{*}{0} & 1 & $\infty$ & 0.0&  \multirow{2}{*}{0} & \multirow{2}{*}{20} & \multirow{2}{*}{\bf -2.31695$^*$} & \multirow{2}{*}{2} \\
8 & & &  & &  2 & {\bf -2.31695} & 3545& &  & &  \\
\hline
\hline
9 & \multirow{2}{*}{5}&\multirow{2}{*}{30} & \multirow{2}{*}{7} & \multirow{2}{*}{0} & 1 & $\infty$ & 0.2& \multirow{2}{*}{0} & \multirow{2}{*}{31} & \multirow{2}{*}{\bf -1.79295} & \multirow{2}{*}{45}\\
10 & & &  & &  2 & -2.13423 & 15135 & &  &  &\\
\hline
\hline
11 & \multirow{2}{*}{6}&\multirow{2}{*}{30} & \multirow{2}{*}{7} & \multirow{2}{*}{6} & 1 & $\infty$ & 0.1&  \multirow{2}{*}{1} & \multirow{2}{*}{31} & \multirow{2}{*}{\bf -1.56374} & \multirow{2}{*}{54}  \\
12 & & &  & &  2 & {\bf -1.56374} & 12480&   &  & &   \\
\hline
\end{tabular}

\begin{tabular}{|c|c|c|c|c|c|c|c|c|c|c|c|c|c|c|c|c|c|c|c|c|}
        \hline
\multirow{2}{*}{Id}&
\multicolumn{4}{c|}{\revise{Putinar SDP \eqref{eq:sdp.Pu}}}&      \multicolumn{4}{c|}{\revise{P\'olya SDP \eqref{eq:primal.problem.0.even}}}\\ \cline{2-9}
&
nmat & msize & nscal & naff &nmat & msize & nscal & naff \\
\hline
\hline
1 & 1 & 21 & 22 & 231 & \multirow{2}{*}{5} & \multirow{2}{*}{17} &\multirow{2}{*}{232} &\multirow{2}{*}{231}\\
2 & 22 & 231 & 1 & 10626 &  &  & &\\
\hline
\hline
3 & 1 & 21 & 25 & 231 & \multirow{2}{*}{44} & \multirow{2}{*}{20} &\multirow{2}{*}{1604} &\multirow{2}{*}{1771}\\
4 & 25 & 231 & 1 & 10626 & & &&\\
\hline
\hline
5 & 1 & 21 & 29 & 231 & \multirow{2}{*}{330} & \multirow{2}{*}{7} &\multirow{2}{*}{1688}  &\multirow{2}{*}{1771} \\
6 & 25 & 231 & 925 & 10626 &  & &  &\\
\hline
\hline
7 & 1 & 31 & 32 & 496 & \multirow{2}{*}{11} & \multirow{2}{*}{21} &\multirow{2}{*}{497}  &\multirow{2}{*}{496} \\
8 & 32 & 496 & 1 & 46376 & &  & &\\
\hline
\hline
9 & 1 & 31 & 37 & 496 & \multirow{2}{*}{32} & \multirow{2}{*}{31} &\multirow{2}{*}{5116}  &\multirow{2}{*}{5456} \\
10 & 37 & 496 & 1 & 46376 &  & & &\\
\hline
\hline
11 & 1 & 31 & 43 & 496 & \multirow{2}{*}{32} & \multirow{2}{*}{31} &\multirow{2}{*}{5302}  &\multirow{2}{*}{5456} \\
12 & 37 & 496 & 2977 & 46376 & &  &  & \\
\hline
\end{tabular}
\end{center}
\end{table}
\paragraph{Discussion:}
Table \ref{tab:QCQP.on.unit.ball.nonneg} shows that
\revise{P\'olya SDP \eqref{eq:primal.problem.0.even}} is typically faster and more accurate than \revise{Putinar SDP \eqref{eq:sdp.Pu}}.
For instance, when $n=20$, $m_{\text{ineq}}=5$ and $m_{\text{eq}}=0$, \revise{Putinar SDP \eqref{eq:sdp.Pu}} takes $342$ seconds to return the lower bound $-0.350601$ for $f^\star$, while \revise{P\'olya SDP \eqref{eq:primal.problem.0.even}} only takes $9$ seconds to return the better \revise{lower} bound $-0.265883$ and an approximate optimal solution. \revise{This is because} \revise{P\'olya SDP \eqref{eq:primal.problem.0.even}} has $44$ matrix variables \revise{of maximal} matrix size $20$, while \revise{Putinar SDP \eqref{eq:sdp.Pu}} has $25$ matrix variables \revise{with maximal matrix size $231$} in this case. 

\subsection{Stability number of a graph}
\label{sec:stability.known.graph}
In order to compute the stability number $\alpha(G)$ of a given graph $G$, we solve the following POP on the unit simplex:
\begin{equation}
\label{eq:sta.num.POP}
\begin{array}{rl}
     \frac{1}{\alpha(G)}=\min\limits_{\x\in\R_+^n}\{\x^\top (\mathbf A+\mathbf I) \x\,:\,\sum_{j\in[n]}x_j=1\}\,,
\end{array}
\end{equation}
where $\mathbf A$ is the adjacency matrix of $G$ and $\mathbf I$ is the identity matrix.
\paragraph{Test problems:}  
We take some adjacency matrices of known graphs from \cite{nr}.
The numerical results are displayed in Tables \ref{tab:SN.known} and \ref{tab:SN.known.unitball}.
Note that in Table \ref{tab:SN.known.unitball}, we solve POP \eqref{eq:sta.num.POP} with an additional unit ball constraint $1-\|\x\|_2^2\ge 0$. 
The columns under ``val" show the approximations of $\alpha(G)$.
\begin{table}
    \caption{\small Numerical results for stability number of some known graphs in \cite{nr}.}
    \label{tab:SN.known}
\small
\begin{center}
\begin{tabular}{|c|c|c|c|c|c|c|c|c|c|c|c|c|c|c|c|c|c|c|c|c|c|c|c|}
        \hline
\multirow{2}{*}{Id} &\multirow{2}{*}{Pb}    &
\multicolumn{1}{c|}{POP size}&
\multicolumn{3}{c|}{\revise{Putinar SDP \eqref{eq:sdp.Pu}}}&      \multicolumn{4}{c|}{\revise{P\'olya SDP \eqref{eq:primal.problem.0.even}}}\\ \cline{3-10}
&
&{$n$}&
{$k$} & {val} & {time} & {$k$} & {$s$}& {val} & {time}\\
\hline
\hline
1 &\multirow{2}{*}{GD02\_a}& \multirow{2}{*}{23} &  1 &$\infty$  & 0.02 & \multirow{2}{*}{0} & \multirow{2}{*}{25} & \multirow{2}{*}{\bf 13.0000} & \multirow{2}{*}{1} \\
2&& &  2 &  13.0110 & 394&  &  &  & \\
\hline
\hline
3 &\multirow{2}{*}{johnson8-2-4}& \multirow{2}{*}{28} &  1 &$\infty$  & 0.03 & \multirow{2}{*}{0} & \multirow{2}{*}{30} & \multirow{2}{*}{\bf 7.00000} & \multirow{2}{*}{1}\\
4& &  &  2 &  {\bf 7.00000} & 2098&  & & &\\
\hline
\hline
5 &\multirow{2}{*}{johnson8-4-4}& \multirow{2}{*}{70} &  1 & $\infty$ & 1& \multirow{2}{*}{0} & \multirow{2}{*}{72} & \multirow{2}{*}{\bf 5.00000} & \multirow{2}{*}{5}\\
6& &  &  2 & $-$ & $-$& & & & \\
\hline
\hline
7 &\multirow{2}{*}{hamming6-2}& \multirow{2}{*}{64} &  1 & $\infty$  & 0.5& \multirow{2}{*}{0} & \multirow{2}{*}{66} & \multirow{2}{*}{\bf 1.99999} & \multirow{2}{*}{3}\\
8& & &  2 &  $-$ & $-$& & & & \\
\hline
\hline
9 &\multirow{2}{*}{hamming6-4}& \multirow{2}{*}{64} &  1 & $\infty$ & 0.6& \multirow{2}{*}{0} & \multirow{2}{*}{66} & \multirow{2}{*}{\bf 12.0000} & \multirow{2}{*}{3}\\
10& &  &  2 &  $-$ & $-$& & & &  \\
\hline
\hline
11 &\multirow{2}{*}{johnson16-2-4}& \multirow{2}{*}{120} &  1 & $\infty$& 0.6& \multirow{2}{*}{0} & \multirow{2}{*}{122} & \multirow{2}{*}{\bf 15.0001} & \multirow{2}{*}{54}\\
12&& &  2 &  $-$ & $-$&  &  &  & \\
\hline
\end{tabular}

\begin{tabular}{|c|c|c|c|c|c|c|c|c|c|c|c|c|c|c|c|c|}
        \hline
\multirow{2}{*}{Id}&
\multicolumn{4}{c|}{\revise{Putinar SDP \eqref{eq:sdp.Pu}}}&      \multicolumn{4}{c|}{\revise{P\'olya SDP \eqref{eq:primal.problem.0.even}}}\\ \cline{2-9}
&
nmat & msize & nscal & naff &nmat & msize & nscal & naff\\
\hline
\hline
1 & 1 & 24 & 25 & 300 & \multirow{2}{*}{1} & \multirow{2}{*}{24} & \multirow{2}{*}{301} & \multirow{2}{*}{300}\\
2 & 24 & 300 & 301 & 17550 &  &  & &\\
\hline
\hline
3 & 1 & 29 & 30 & 435 & \multirow{2}{*}{1} & \multirow{2}{*}{29} &\multirow{2}{*}{436} &\multirow{2}{*}{435} \\
4 & 29 & 435 & 436 & 35960 &  & & & \\
\hline
\hline
5 & 1 & 71 & 72 & 2556 & \multirow{2}{*}{1} & \multirow{2}{*}{71} &\multirow{2}{*}{2557} &\multirow{2}{*}{2556}\\
6 & 71 & 2556 & 2557 & 1150626 &  &  & &\\
\hline
\hline
7 & 1 & 65 & 66 & 2145 & \multirow{2}{*}{1} & \multirow{2}{*}{65} &\multirow{2}{*}{2146} &\multirow{2}{*}{2145}\\
8 & 65 & 2145&2146 & 814385 & & & &\\
\hline
\hline
9 & 1 & 65 & 66 & 2145 & \multirow{2}{*}{1} & \multirow{2}{*}{65} &\multirow{2}{*}{2146} &\multirow{2}{*}{2145}\\
10 & 65 & 2145&2146 & 814385 &  &  & & \\
\hline
\hline
11 & 1 & 121 & 122 & 7381 & \multirow{2}{*}{1} & \multirow{2}{*}{121} &\multirow{2}{*}{7382} &\multirow{2}{*}{7381}\\
12 & 121 & 7381&7380 & 9381251 &  & & & \\
\hline
\end{tabular}
\end{center}
\end{table}
\begin{table}
    \caption{\small Numerical results for stability number of some known graphs in \cite{nr} with an additional unit ball constraint.}
    \label{tab:SN.known.unitball}
\small
\begin{center}
\begin{tabular}{|c|c|c|c|c|c|c|c|c|c|c|c|c|c|c|c|c|c|c|c|c|c|c|c|}
        \hline
\multirow{2}{*}{Id} &\multirow{2}{*}{Pb}    &
\multicolumn{1}{c|}{POP size}&
\multicolumn{3}{c|}{\revise{Putinar SDP \eqref{eq:sdp.Pu}}}&      \multicolumn{4}{c|}{\revise{P\'olya SDP \eqref{eq:primal.problem.0.even}}}\\ \cline{3-10}
&
&{$n$}&
{$k$} & {val} & {time} & {$k$} & {$s$}& {val} & {time}\\
\hline
\hline
1 &\multirow{2}{*}{GD02\_a}& \multirow{2}{*}{23} &  1 &-0.62896 & 0.02 & \multirow{2}{*}{0} & \multirow{2}{*}{13} & \multirow{2}{*}{\bf 13.0000}& \multirow{2}{*}{1} \\
2&& &  2 &  13.0170 & 442&  &  &  & \\
\hline
\hline
3 &\multirow{2}{*}{johnson8-2-4}& \multirow{2}{*}{28} &  1 &-0.30434  & 0.03 & \multirow{2}{*}{0} & \multirow{2}{*}{23} & \multirow{2}{*}{\bf 7.00000}& \multirow{2}{*}{1}\\
4& &  &  2 &  {\bf 7.00000} & 3010&  &  &  &\\
\hline
\hline
5 &\multirow{2}{*}{johnson8-4-4}& \multirow{2}{*}{70} &  1 & -0.14056 & 1& \multirow{2}{*}{0} & \multirow{2}{*}{70} &  \multirow{2}{*}{\bf 5.00000} & \multirow{2}{*}{10}\\
6& & &  2 & $-$ & $-$&  &  &  &\\
\hline
\hline
7 &\multirow{2}{*}{hamming6-2}& \multirow{2}{*}{64} &  1 & -0.32989  & 1& \multirow{2}{*}{0} & \multirow{2}{*}{64} & \multirow{2}{*}{\bf 2.00000} & \multirow{2}{*}{7} \\
8& &  &  2 &  $-$ & $-$&  & &  &\\
\hline
\hline
9 &\multirow{2}{*}{hamming6-4}& \multirow{2}{*}{64} &  1 & -0.11764 & 0.6& \multirow{2}{*}{0} & \multirow{2}{*}{64} & \multirow{2}{*}{\bf 12.0000} & \multirow{2}{*}{6}\\
10& & &  2 &  $-$ & $-$&  & &  & \\
\hline
\hline
11 &\multirow{2}{*}{johnson16-2-4}& \multirow{2}{*}{120} &  1 & -0.08982& 26& \multirow{2}{*}{0} & \multirow{2}{*}{121} & \multirow{2}{*}{\bf 15.0000} & \multirow{2}{*}{75}\\
12& & &  2 &  $-$ & $-$&  &  &  & \\
\hline
\end{tabular}

\begin{tabular}{|c|c|c|c|c|c|c|c|c|c|c|c|c|c|c|c|c|c|c|c|c|}
        \hline
\multirow{2}{*}{Id}&
\multicolumn{4}{c|}{\revise{Putinar SDP \eqref{eq:sdp.Pu}}}&      \multicolumn{4}{c|}{\revise{P\'olya SDP \eqref{eq:primal.problem.0.even}}}\\ \cline{2-9}
&
nmat & msize & nscal & naff &nmat & msize & nscal & naff\\
\hline
\hline
1 & 1 & 24 & 26 & 300 & \multirow{2}{*}{12} & \multirow{2}{*}{13} & \multirow{2}{*}{302} &\multirow{2}{*}{300}  \\
2 & 25 & 300 & 301 & 17550 &  &  & &\\
\hline
\hline
3 & 1 & 29 & 31 & 435 & \multirow{2}{*}{7} & \multirow{2}{*}{23} & \multirow{2}{*}{437} &\multirow{2}{*}{435} \\
4 & 30 & 435 & 436 & 35960 &  &  & &  \\
\hline
\hline
5 & 1 & 71 & 73 & 2556 & \multirow{2}{*}{2} & \multirow{2}{*}{70} & \multirow{2}{*}{2558} &\multirow{2}{*}{2556} \\
6 & 72 & 2556 & 2557 & 1150626 &  &  & & \\
\hline
\hline
7 & 1 & 65 & 67 & 2145 & \multirow{2}{*}{2} & \multirow{2}{*}{64} &\multirow{2}{*}{2146} &\multirow{2}{*}{2145}\\
8 & 66 & 2145&2146 & 814385 &  &  & &\\
\hline
\hline
9 & 1 & 65 & 67 & 2145 & \multirow{2}{*}{2} & \multirow{2}{*}{64} &\multirow{2}{*}{2146} &\multirow{2}{*}{2145}\\
10 & 66 & 2145&2146 & 814385 &  &  & &\\
\hline
\hline
11 & 1 & 121 & 123 & 7381 & \multirow{2}{*}{1} & \multirow{2}{*}{121} &\multirow{2}{*}{7383} &\multirow{2}{*}{7381}\\
12 & 122 & 7381&7380 & 9381251 & & & &\\
\hline
\end{tabular}
\end{center}
\end{table}
\paragraph{Discussion:}
The graphs from Table \ref{tab:SN.known} are relatively dense so that we cannot exploit term sparsity or correlative sparsity for POP \eqref{eq:sta.num.POP} in these cases.
For the graph GD02\_a in Table \ref{tab:SN.known}, \revise{P\'olya SDP \eqref{eq:primal.problem.0.even}} provides  better bounds for $\alpha(G)$ compared to the ones returned by the second order relaxations of \revise{Putinar SDP \eqref{eq:sdp.Pu}}.
In Table \ref{tab:SN.known.unitball}, \revise{Putinar SDP \eqref{eq:sdp.Pu}} provides negative values for the first order relaxations.
The additional unit ball constraint does not help to improve the bound for the second order relaxation for Id 2.

\subsection{The MAXCUT problems}

The MAXCUT problem is given by:
\begin{equation}
\label{eq:maxcut}
    \max_{\x\in\{0,1\}^n} \x^\top \mathbf W(\e-\x)\,,
\end{equation}
where $\e=(1,\dots,1)$  and $W$ is the matrix of edge weights associated with a graph (see \cite[Theorem 1]{commander2009maximum}).
\paragraph{Test problems:}
The data of graphs is taken from TSPLIB \cite{reinelt1991tsplib}.

The numerical results are displayed in Table \ref{tab:maxcut}.
Note that all instances of matrix $W$ are dense. 
\begin{table}
    \caption{\small Numerical results for some instances of MAXCUT problems.}
    \label{tab:maxcut}
\small
\begin{center}
\begin{tabular}{|c|c|c|c|c|c|c|c|c|c|c|c|c|c|c|c|c|c|c|c|c|c|c|c|}
        \hline
\multirow{2}{*}{Id} &\multirow{2}{*}{Pb}    &
\multicolumn{1}{c|}{POP size}&
\multicolumn{3}{c|}{\revise{Putinar SDP \eqref{eq:sdp.Pu}}}&      \multicolumn{4}{c|}{\revise{P\'olya SDP \eqref{eq:primal.problem.0.even}}}\\ \cline{3-10}
&
&{$n$}&
{$k$} & {val} & {time} & {$k$} & {$s$}& {val} & {time} \\
\hline
\hline
1 &\multirow{2}{*}{burma14}& \multirow{2}{*}{14} &  1 &30310.915  & 0.2& \multirow{2}{*}{1} & \multirow{2}{*}{16} & \multirow{2}{*}{\bf 30302.000} & \multirow{2}{*}{1} \\
2& &  &  2 & {\bf 30301.999} & 4&  &  &  & \\
\hline
\hline
3 &\multirow{2}{*}{gr17}& \multirow{2}{*}{17} &  1 & 25089.044 & 0.2 & \multirow{2}{*}{1} & \multirow{2}{*}{19} & \multirow{2}{*}{\bf 24986.000} & \multirow{2}{*}{1}\\
4& & &  2 &  {\bf 24985.999} & 24  &  &  & & \\
\hline
\hline
5 &\multirow{2}{*}{fri26}& \multirow{2}{*}{26} &  1 & 22220.657  & 0.4& \multirow{2}{*}{1} & \multirow{2}{*}{28} & \multirow{2}{*}{\bf 22218.000} & \multirow{2}{*}{12}\\
6&& &  2 &  {\bf 22217.999} & 1970&  & &  & \\
\hline
\hline
7 &\multirow{2}{*}{att48}& \multirow{2}{*}{48} &  1 & 799281.420  & 1& \multirow{2}{*}{1} & \multirow{2}{*}{50} & \multirow{2}{*}{\bf 798857.049} & \multirow{2}{*}{1129}\\
8&&  &  2 &  $-$ & $-$&  &  &  &\\
\hline
\end{tabular}

\begin{tabular}{|c|c|c|c|c|c|c|c|c|c|c|c|c|c|c|c|c|c|c|c|c|}
        \hline
\multirow{2}{*}{Id}&
\multicolumn{4}{c|}{\revise{Putinar SDP \eqref{eq:sdp.Pu}}}&      \multicolumn{4}{c|}{\revise{P\'olya SDP \eqref{eq:primal.problem.0.even}}}\\ \cline{2-9}
&
nmat & msize & nscal & naff &nmat & msize & nscal & naff\\
\hline
\hline
1 & 1&15 & 29 &130 & \multirow{2}{*}{15} & \multirow{2}{*}{15} & \multirow{2}{*}{666} &\multirow{2}{*}{680}\\
2 & 15 & 120 & 1681 & 3060&  &  &  &\\
\hline
\hline
3 & 1 & 18 & 35 & 171 & \multirow{2}{*}{18} & \multirow{2}{*}{18} &\multirow{2}{*}{1123} &\multirow{2}{*}{1140}\\
4 & 18 & 171 & 2908 & 5985 &  &  & & \\
\hline
\hline
5 & 1 & 27 & 53 & 378 & \multirow{2}{*}{27} & \multirow{2}{*}{27} &\multirow{2}{*}{3628} &\multirow{2}{*}{3654}\\
6 & 27 & 378 &9829 & 27405 & &  & &\\
\hline
\hline
7 & 1 & 49 & 97 & 1225 & \multirow{2}{*}{49} & \multirow{2}{*}{49} &\multirow{2}{*}{20777} &\multirow{2}{*}{20825}\\
8 & 30 & 465 &13486 & 40920 & &  & &\\
\hline
\end{tabular}
\end{center}
\end{table}
\paragraph{Discussion:}
The behavior of our method is similar to that in Section \ref{sec:densePOP.exp}.
\subsection{Positive maximal singular values}
\label{sec:PMSV}
\paragraph{Test problems:}  We generate a matrix $\M$ as in
\cite[(12)]{ebihara2021l2}. Explicitly, 
\begin{equation}
    \M:=\begin{bmatrix}
    \mathbf D & \mathbf 0 &\mathbf 0& \dots& \mathbf 0\\
    \mathbf C\mathbf B& \mathbf D &\mathbf 0&\dots& \mathbf 0\\
    \mathbf C\mathbf A\mathbf B& \mathbf C\mathbf B & \mathbf D& \dots& \mathbf 0\\
    \dots&\dots&\dots&\dots&\dots\\
    \mathbf C\mathbf A^{m-2}\mathbf B&\mathbf C\mathbf A^{m-3}\mathbf B&\mathbf C\mathbf A^{m-4}\mathbf B & \dots &\mathbf D
    \end{bmatrix}\,,
\end{equation}
where $\mathbf A,\mathbf B,\mathbf C,\mathbf D$ are square matrices of size $r=m$.
Every entry of $\mathbf A,\mathbf B,\mathbf C,\mathbf D$ is taken uniformly in $(-1,1)$.
In order to compute the positive maximal singular value $\sigma_+(\M)$ of $\M$, we solve the following POP on the nonnegative orthant:
\begin{equation}
\begin{array}{rl}
     \sigma_+(\M)^2=\max\limits_{\x\in\R^n_+}\{\x^\top (\M^\top \M) \x\,:\,\|\x\|_2^2=1\}\,.
\end{array}
\end{equation}
Note that $n=m\times r=m^2$.

The numerical results are displayed in Table \ref{tab:PMSV}.
\begin{table}
    \caption{\small Numerical results for positive maximal singular values.}
    \label{tab:PMSV}
\small
\begin{center}
\begin{tabular}{|c|c|c|c|c|c|c|c|c|c|c|c|c|c|c|c|c|c|c|c|}
        \hline
\multirow{2}{*}{Id} &\multirow{2}{*}{Pb}    &
\multicolumn{2}{c|}{POP size}&
\multicolumn{3}{c|}{\revise{Putinar SDP \eqref{eq:sdp.Pu}}}&      \multicolumn{4}{c|}{\revise{P\'olya SDP \eqref{eq:primal.problem.0.even}}}\\ \cline{3-11}
&&{$m$}
&{$n$}&
{$k$} & {val} & {time} & {$k$} & {$s$}& {val} & {time}\\
\hline
\hline
1 &\multirow{2}{*}{1}&\multirow{2}{*}{4}& \multirow{2}{*}{16} &  1 & 47.48110  & 0.02&  \multirow{2}{*}{0} & \multirow{2}{*}{17} & \multirow{2}{*}{\bf 30.18791} & \multirow{2}{*}{1}\\
2& & & &  2 & {\bf 30.18791} & 16&   &  &  & \\
\hline
\hline
3 &\multirow{2}{*}{2}&\multirow{2}{*}{5}& \multirow{2}{*}{25} &  1 & 168.4450  & 0.04& \multirow{2}{*}{0} & \multirow{2}{*}{26} & \multirow{2}{*}{\bf 91.28158} & \multirow{2}{*}{0.7} \\
4&&& &  2 & {\bf 91.28158} & 877 &  &  &  & \\
\hline
\hline
5 &\multirow{2}{*}{3}&\multirow{2}{*}{6}& \multirow{2}{*}{36} &  1 & 4759.12  & 0.2&  \multirow{2}{*}{0} & \multirow{2}{*}{37} & \multirow{2}{*}{\bf 2462.03} & \multirow{2}{*}{0.9}\\
6&&& &  2 & $-$ & $-$&  & & & \\
\hline
\hline
7 &\multirow{2}{*}{4}& \multirow{2}{*}{7}&\multirow{2}{*}{49} &  1 & 1777.53  & 0.5& \multirow{2}{*}{0} & \multirow{2}{*}{50} & \multirow{2}{*}{\bf 970.202} & \multirow{2}{*}{2}\\
8&& & &  2 & $-$ & $-$&  &  &  & \\
\hline
\end{tabular}

\begin{tabular}{|c|c|c|c|c|c|c|c|c|c|c|c|c|c|c|c|c|}
        \hline
\multirow{2}{*}{Id}&
\multicolumn{4}{c|}{\revise{Putinar SDP \eqref{eq:sdp.Pu}}}&      \multicolumn{4}{c|}{\revise{P\'olya SDP \eqref{eq:primal.problem.0.even}}}\\ \cline{2-9}
&
nmat & msize & nscal & naff &nmat & msize & nscal & naff\\
\hline
\hline
1 & 1 & 17 & 38 & 153 & \multirow{2}{*}{1} & \multirow{2}{*}{17} &\multirow{2}{*}{138} &\multirow{2}{*}{153}\\
2 & 17 & 153 & 154 & 4845 &  &  & &\\
\hline
\hline
3 & 1 & 26 & 27 & 351 & \multirow{2}{*}{1} & \multirow{2}{*}{26} &\multirow{2}{*}{327} &\multirow{2}{*}{351}\\
4 & 26 & 351 & 352 & 23751 &  &  & &\\
\hline
\hline
5 & 1 & 37 & 38 & 703 & \multirow{2}{*}{1} & \multirow{2}{*}{37} &\multirow{2}{*}{668} &\multirow{2}{*}{703}\\
6 & 37 & 703 &704 & 91390 &  &  & &\\
\hline
\hline
7 & 1 & 50 & 51 & 1275& \multirow{2}{*}{1} & \multirow{2}{*}{50} &\multirow{2}{*}{1227} &\multirow{2}{*}{1275}\\
8 & 50 & 1275 &1276 & 292825 &  &  & & \\
\hline
\end{tabular}
\end{center}
\end{table}
The columns of val show the approximations of $\sigma_+(\M)^2$.
\paragraph{Discussion:}
The behavior of our method is similar to that in Section \ref{sec:densePOP.exp}.

\subsection{Stability
number of a graph}
Let us consider POP \eqref{eq:sta.num.POP} which  returns the stability number of a graph $G$.
\paragraph{Test problems:}  We generate the adjacency matrix $\mathbf A=(a_{ij})_{j,j\in[n]}$ of the graph $G$ by the following steps: 
\begin{enumerate}
    \item Set $a_{ii}=0$, for $i\in[n]$.
    \item For $i\in[n]$, for $j\in\{1,\dots,i-1\}$, let us select $a_{ij}=a_{ji}$ uniformly $\{0,1\}$. 
\end{enumerate}
The numerical results are displayed in Table \ref{tab:SN}.

Note that the columns of val show the approximations of $\alpha(G)$.
\begin{table}
    \caption{\small Numerical results for stability number of randomly generated graphs.}
    \label{tab:SN}
\small
\begin{center}
\begin{tabular}{|c|c|c|c|c|c|c|c|c|c|c|c|c|c|c|c|c|c|c|c|}
        \hline
\multirow{2}{*}{Id} &\multirow{2}{*}{Pb}    &
\multicolumn{1}{c|}{POP size}&
\multicolumn{3}{c|}{\revise{Putinar SDP \eqref{eq:sdp.Pu}}}&      \multicolumn{4}{c|}{\revise{P\'olya SDP \eqref{eq:primal.problem.0.even}}}\\ \cline{3-10}
&
&{$n$}&
{$k$} & {val} & {time} & {$k$} & {$s$}& {val} & {time}\\
\hline
\hline
1 &\multirow{2}{*}{1}& \multirow{2}{*}{10} &  1 &$\infty$  & 0.01 &\multirow{2}{*}{0} & \multirow{2}{*}{11} & \multirow{2}{*}{\bf 3.00000} & \multirow{2}{*}{1}\\
2& &  &  2 &  3.02305 & 0.6&  &  &  & \\
\hline
\hline
3 &\multirow{2}{*}{2}& \multirow{2}{*}{15} &  1 & $\infty$ & 0.01&\multirow{2}{*}{0} & \multirow{2}{*}{16} & \multirow{2}{*}{\bf 5.00000} & \multirow{2}{*}{1}\\
4& & &  2 &  5.01898 & 10&  &  &  & \\
\hline
\hline
5 &\multirow{2}{*}{3}& \multirow{2}{*}{20} &  1 & $\infty$  & 0.02& \multirow{2}{*}{1} & \multirow{2}{*}{21} & \multirow{2}{*}{\bf 5.00001} & \multirow{2}{*}{4}\\
6& & &  2 &  5.02951 & 119&  &  &  & \\
\hline
\hline
7 &\multirow{2}{*}{4}& \multirow{2}{*}{25} &  1 & $\infty$  & 0.04& \multirow{2}{*}{1} & \multirow{2}{*}{26} & \multirow{2}{*}{\bf 6.00000} & \multirow{2}{*}{10}\\
8& & &  2 &  6.05801 & 1064&  & &  & \\
\hline
\end{tabular}

\begin{tabular}{|c|c|c|c|c|c|c|c|c|c|c|c|c|c|c|c|c|}
        \hline
\multirow{2}{*}{Id}&
\multicolumn{4}{c|}{\revise{Putinar SDP \eqref{eq:sdp.Pu}}}&      \multicolumn{4}{c|}{\revise{P\'olya SDP \eqref{eq:primal.problem.0.even}}}\\ \cline{2-9}
&
nmat & msize & nscal & naff &nmat & msize & nscal & naff\\
\hline
\hline
1 & 1 & 11 & 12 & 66 & \multirow{2}{*}{1} & \multirow{2}{*}{11} &\multirow{2}{*}{67} &\multirow{2}{*}{66}\\
2 & 11 & 66 & 67 & 1001 &  & & &\\
\hline
\hline
3 & 1 & 16 & 17 & 136 & \multirow{2}{*}{1} & \multirow{2}{*}{16} &\multirow{2}{*}{137} &\multirow{2}{*}{136}\\
4 & 16 & 136 & 137 & 3876 &  & & &\\
\hline
\hline
5 & 1 & 21 & 22 & 231 & \multirow{2}{*}{21} & \multirow{2}{*}{21} &\multirow{2}{*}{1562} &\multirow{2}{*}{1771}\\
6 & 21 & 231&232 & 10626 &  &  & & \\
\hline
\hline
7 & 1 & 26 & 27 & 351 & \multirow{2}{*}{26} & \multirow{2}{*}{26} &\multirow{2}{*}{2952} &\multirow{2}{*}{3276}\\
8 & 26 & 351 &352 & 23751 &  &  & &\\
\hline
\end{tabular}
\end{center}
\end{table}
\paragraph{Discussion:}
The behavior of our method is similar to that in Section \ref{sec:densePOP.exp}. 
Note that the graphs from Tables \ref{tab:SN} are dense so that we cannot exploit term sparsity or correlative sparsity for POP \eqref{eq:sta.num.POP} in these cases.
Moreover, for all graphs in Table \ref{tab:SN}, \revise{P\'olya SDP \eqref{eq:primal.problem.0.even}} provides the better bounds for $\alpha(G)$ compared to the ones returned by the second order relaxations of \revise{Putinar SDP \eqref{eq:sdp.Pu}}.

\begin{remark}
In Pb 3, 4 of Table \ref{tab:SN}, \revise{P\'olya SDP \eqref{eq:primal.problem.0.even}} with $k=1$ provides a better bound than \revise{P\'olya SDP \eqref{eq:primal.problem.0.even}} with $k=0$.
As shown in Remark \ref{re:special.case}, each SDP relaxation of \revise{P\'olya SDP \eqref{eq:primal.problem.0.even}} with $k=0$ and sufficiently large $s$ corresponds to an SDP relaxation obtained after exploiting term sparsity. 
\end{remark}

\subsection{Deciding the copositivity of a real symmetric matrix}
\label{sec:deci.copo2}
Given a symmetric matrix $\mathbf A\in\R^{n\times n}$, we say that $\mathbf A$ is copositive if $ \mathbf{u}^\top \mathbf A \mathbf{u}\ge 0 $ for all $ \mathbf{u}\in \R_+^n$.
Consider the following POP:
\begin{equation}\label{eq:copo.POP2}
\begin{array}{rl}
     f^\star:=\min\limits_{\mathbf x \in \R^n_+} \{ \x^\top \mathbf A \x\,:\, \sum_{j\in[n]}x_j=1\}\,.
\end{array}
\end{equation}
The matrix  $\mathbf A$ is copositive iff $f^\star \ge 0$.
\paragraph{Test problems:}
We construct several instances of the matrix $\mathbf A$ as follows:
\begin{enumerate}
    \item Take $B_{ij}$ randomly in $(-1,1)$ w.r.t. the uniform distribution, for all $i,j\in\{1,\dots,n\}$.
    \item Set $\mathbf B:=(B_{ij})_{1\le i,j\le n}$ and $\mathbf A:=\frac{1}{2}(\mathbf B+\mathbf B^\top )$.
\end{enumerate}

The numerical results are displayed in Table \ref{tab:copos}.
\begin{table}
    \caption{\small Numerical results for   deciding the copositivity of a real symmetric matrix.}
    \label{tab:copos}
\small
\begin{center}
\begin{tabular}{|c|c|c|c|c|c|c|c|c|c|c|c|c|c|c|c|c|c|c|c|}
        \hline
\multirow{2}{*}{Id} &\multirow{2}{*}{Pb}    &
\multicolumn{1}{c|}{POP size}&
\multicolumn{3}{c|}{\revise{Putinar SDP \eqref{eq:sdp.Pu}}}&      \multicolumn{4}{c|}{\revise{P\'olya SDP \eqref{eq:primal.problem.0.even}}}\\ \cline{3-10}
&
&{$n$}&
{$k$} & {val} & {time} & {$k$} & {$s$}& {val} & {time}\\
\hline
\hline
1 &\multirow{2}{*}{1}& \multirow{2}{*}{10} &  1 &-1.45876  & 0.004 &  \multirow{2}{*}{0} & \multirow{2}{*}{8} & \multirow{2}{*}{\bf -0.94862$^*$} & \multirow{2}{*}{1}\\
2& &  &  2 & {\bf -0.94862} & 0.2  &  &  &  & \\
\hline
\hline
3 &\multirow{2}{*}{2}& \multirow{2}{*}{15} &  1 & -1.41319 & 0.007&\multirow{2}{*}{0} & \multirow{2}{*}{13} & \multirow{2}{*}{\bf -0.65197$^*$} & \multirow{2}{*}{1}\\
4& & &  2 &  {\bf -0.65197} & 10  & &  &  & \\
\hline
\hline
5 &\multirow{2}{*}{3}& \multirow{2}{*}{20} &  1 & -1.40431  & 0.02& \multirow{2}{*}{0} & \multirow{2}{*}{20} & \multirow{2}{*}{\bf -0.98026$^*$} & \multirow{2}{*}{1}\\
6& &  &  2 & {\bf  -0.98026} & 89&  & &  & \\
\hline
\hline
7 &\multirow{2}{*}{4}& \multirow{2}{*}{25} &  1 & -1.34450  & 0.03& \multirow{2}{*}{0} & \multirow{2}{*}{19} & \multirow{2}{*}{\bf -0.97345$^*$} & \multirow{2}{*}{2}\\
8& &  &  2 &  {\bf -0.97345} & 519 &  &  & & \\
\hline
\end{tabular}

\begin{tabular}{|c|c|c|c|c|c|c|c|c|c|c|c|c|c|c|c|c|}
        \hline
\multirow{2}{*}{Id}&
\multicolumn{4}{c|}{\revise{Putinar SDP \eqref{eq:sdp.Pu}}}&      \multicolumn{4}{c|}{\revise{P\'olya SDP \eqref{eq:primal.problem.0.even}}}\\ \cline{2-9}
&
nmat & msize & nscal & naff &nmat & msize & nscal & naff\\
\hline
\hline
1 & 1 & 11 & 12 & 66 & \multirow{2}{*}{4} & \multirow{2}{*}{8} &\multirow{2}{*}{67} &\multirow{2}{*}{66}\\
2 & 11 & 66 & 67 & 1001 &  &  & &\\
\hline
\hline
3 & 1 & 16 & 17 & 136 & \multirow{2}{*}{4} & \multirow{2}{*}{13} &\multirow{2}{*}{137} &\multirow{2}{*}{136}\\
4 & 16 & 136 & 137 & 3876 &  &  & & \\
\hline
\hline
5 & 1 & 21 & 22 & 231 & \multirow{2}{*}{2} & \multirow{2}{*}{20} &\multirow{2}{*}{232} &\multirow{2}{*}{231}\\
6 & 21 & 231&232 & 10626 &  &  & & \\
\hline
\hline
7 & 1 & 26 & 27 & 351 &  \multirow{2}{*}{8} & \multirow{2}{*}{19} &\multirow{2}{*}{352} &\multirow{2}{*}{351}\\
8 & 26 & 351 &352 & 23751 &  & & &\\
\hline
\end{tabular}
\end{center}
\end{table}
\paragraph{Discussion:}
The behavior of our method is similar to that in Section \ref{sec:densePOP.exp}.
In all cases, we can extract the solutions of the resulting POP and certify that $\mathbf A$ is not copositive since $f^\star$ is negative.

\subsection{Deciding the nonnegativity of an even degree form on the nonegative orthant}
\label{sec:deci.pos2}
Given a form $q\in\R[\x]$, $q$ is nonnegative on $\R^n_+$ iff $q$ is nonnegative on the unit simplex
\begin{equation}
\begin{array}{rl}
     \Delta:=\{\x\in\R^n_+\,:\,\sum_{j\in[n]}x_j=1\}\,.
\end{array}
\end{equation}
Given a form $f\in\R[\x]$ of degree $2d$, we consider the following POP:
\begin{equation}\label{eq:pos.POP2}
       f^\star:=\min\limits_{\x\in \Delta} f(\x) \,.
\end{equation}
Note that if $d=1$, problem \eqref{eq:pos.POP2} boils down to  deciding the copositivity of the Gram matrix associated to $f$. 
Thus, we consider the case where $d\ge 2$.
\paragraph{Test problems:}
We construct several instances of the form $f$ of degree $2d$ as follows:
\begin{enumerate}
    \item Take $f_\a$ randomly in $(-1,1)$ w.r.t. the uniform distribution, for each $\a\in\N^n$ with $|\a|=2d$.
    \item Set $f:=\sum_{|\a|=2d} f_\a  \x^\a$.
\end{enumerate}
The numerical results are displayed in Table \ref{tab:nonneg}.
\begin{table}
    \caption{\small Numerical results for   deciding the  nonnegativity of an even degree form on the nonegative orthant, with $d=2$.}
    \label{tab:nonneg}
\small
\begin{center}
\begin{tabular}{|c|c|c|c|c|c|c|c|c|c|c|c|c|c|c|c|c|c|c|c|}
        \hline
\multirow{2}{*}{Id} &\multirow{2}{*}{Pb}    &
\multicolumn{1}{c|}{POP size}&
\multicolumn{3}{c|}{\revise{Putinar SDP \eqref{eq:sdp.Pu}}}&      \multicolumn{4}{c|}{\revise{P\'olya SDP \eqref{eq:primal.problem.0.even}}}\\ \cline{3-10}
&
&{$n$}&
{$k$} & {val} & {time} & {$k$} & {$s$}& {val} & {time}\\
\hline
\hline
1 &\multirow{2}{*}{1}& \multirow{2}{*}{5} &  2 &-1.87958  & 0.001&  \multirow{2}{*}{0} & \multirow{2}{*}{8} & \multirow{2}{*}{\bf -0.68020$^*$} & \multirow{2}{*}{1} \\
2& & &  3 & {\bf -0.68020} & 0.06&  &  &  & \\
\hline
\hline
3 &\multirow{2}{*}{2}& \multirow{2}{*}{10} &  2 & -1.87491 & 0.1& \multirow{2}{*}{0} & \multirow{2}{*}{11} & \multirow{2}{*}{\bf -0.87524$^*$} & \multirow{2}{*}{5} \\
4& & &  3 &  {\bf -0.87524} & 10  &  &  &  & \\
\hline
\hline
5 &\multirow{2}{*}{3}& \multirow{2}{*}{15} &  2 & -2.01566  & 6& \multirow{2}{*}{0} & \multirow{2}{*}{44} & \multirow{2}{*}{\bf -0.86938$^*$} & \multirow{2}{*}{79}\\
6& &  &  3 & {\bf -0.86938} & 7675&  &  &  & \\
\hline
\end{tabular}

\begin{tabular}{|c|c|c|c|c|c|c|c|c|c|c|c|c|c|c|c|c|}
        \hline
\multirow{2}{*}{Id}&
\multicolumn{4}{c|}{\revise{Putinar SDP \eqref{eq:sdp.Pu}}}&      \multicolumn{4}{c|}{\revise{P\'olya SDP \eqref{eq:primal.problem.0.even}}}\\ \cline{2-9}
&
nmat & msize & nscal & naff &nmat & msize & nscal & naff\\
\hline
\hline
1 & 6 & 21 & 22 & 126 &  \multirow{2}{*}{31} & \multirow{2}{*}{6} &\multirow{2}{*}{72} &\multirow{2}{*}{126}\\
2 & 6 & 56 & 232 & 462 & &  & &\\
\hline
\hline
3 & 11 & 66 & 67 & 1001 &  \multirow{2}{*}{111} & \multirow{2}{*}{11} &\multirow{2}{*}{617} &\multirow{2}{*}{1001}\\
4 & 11 & 268 & 2212 & 8008 &  &  & &\\
\hline
\hline
5 & 16 & 136 & 137 & 3876 & \multirow{2}{*}{213} & \multirow{2}{*}{44} &\multirow{2}{*}{2637} &\multirow{2}{*}{3876}\\
6 & 16 & 816 &9317 & 54264 &  &  & &\\
\hline
\end{tabular}
\end{center}
\end{table}
\paragraph{Discussion:}
The behavior of our method is similar to that in Section \ref{sec:densePOP.exp}.
In these cases, we were able to extract the solution of the resulting POPs. One can then conclude that $f$ is not nonnegative on the nonnegative orthant since it has negative value at its atoms.
\subsection{Minimizing a polynomial over the boolean hypercube}
\label{sec:boolean}
Consider the optimization problem:
\begin{equation}
    \min_{\x\in\{0,1\}^n} f(\x)\,,
\end{equation}
where $f$ is a polynomial of degree at most $2d$.
It is equivalent to the following POP on the nonnegative orthant:
\begin{equation}
    \min_{\x\in\R^n_+} \{f(\x)\,:\,x_j(1-x_j)=0\,,\,j\in[n]\}\,,
\end{equation}
\paragraph{Test problems:}
We construct several instances by taking the coefficients of $f$ randomly in $(-1,1)$ w.r.t. to the uniform distribution.

The numerical results are displayed in Table \ref{tab:binary}.
\begin{table}
    \caption{\small Numerical results for   minimizing polynomials over the boolean hypercube, with $d=1$.}
    \label{tab:binary}
\small
\begin{center}
\begin{tabular}{|c|c|c|c|c|c|c|c|c|c|c|c|c|c|c|c|c|c|c|c|}
        \hline
\multirow{2}{*}{Id} &\multirow{2}{*}{Pb}    &
\multicolumn{1}{c|}{POP size}&
\multicolumn{3}{c|}{\revise{Putinar SDP \eqref{eq:sdp.Pu}}}&      \multicolumn{4}{c|}{\revise{P\'olya SDP}}\\ \cline{3-10}
&
&{$n$}&
{$k$} & {val} & {time} & {$k$} & {$s$}& {val} & {time}\\
\hline
\hline
1 &\multirow{2}{*}{1}& \multirow{2}{*}{10} &  1 &-4.61386  & 0.008& \multirow{2}{*}{1} & \multirow{2}{*}{11} & \multirow{2}{*}{\bf -4.34345} & \multirow{2}{*}{1}\\
2& &  &  2 & {\bf -4.34345} & 0.2&  &  &  & \\
\hline
\hline
3 &\multirow{2}{*}{1}& \multirow{2}{*}{20} &  1 & -15.4584 & 0.02& \multirow{2}{*}{1} & \multirow{2}{*}{21} & \multirow{2}{*}{\bf -14.9455} & \multirow{2}{*}{4} \\
4& &  &  2 &  {\bf -14.9455} & 108  &  &  &  & \\
\hline
\hline
5 &\multirow{2}{*}{3}& \multirow{2}{*}{30} &  1 & -29.3433  & 0.1& \multirow{2}{*}{1} & \multirow{2}{*}{31} & \multirow{2}{*}{\bf -27.6311} & \multirow{2}{*}{41}\\
6& &  &  2 & {\bf -27.6311} & 8068&  &  & & \\
\hline
\end{tabular}

\begin{tabular}{|c|c|c|c|c|c|c|c|c|c|c|c|c|c|c|c|c|}
        \hline
\multirow{2}{*}{Id}&
\multicolumn{4}{c|}{\revise{Putinar SDP \eqref{eq:sdp.Pu}}}&      \multicolumn{4}{c|}{\revise{P\'olya SDP}}\\ \cline{2-9}
&
nmat & msize & nscal & naff &nmat & msize & nscal & naff\\
\hline
\hline
1 & 1&11 & 21 &66 & \multirow{2}{*}{11} & \multirow{2}{*}{11} & \multirow{2}{*}{276} &\multirow{2}{*}{286}\\
2 & 6 & 56 & 232 & 462&  &  &  &\\
\hline
\hline
3 & 1 & 21 & 41 & 231 & \multirow{2}{*}{21} & \multirow{2}{*}{21} &\multirow{2}{*}{1751} &\multirow{2}{*}{1771}\\
4 & 21 & 231 & 4621 & 10626 &  &  & & \\
\hline
\hline
5 & 1 & 31 & 61 & 496 & \multirow{2}{*}{31} & \multirow{2}{*}{31} &\multirow{2}{*}{5426} &\multirow{2}{*}{5456}\\
6 & 31 & 496 &14881 & 46376 &  &  & &\\
\hline
\end{tabular}
\end{center}
\end{table}
\paragraph{Discussion:}
The behavior of our method is similar to that in Section \ref{sec:densePOP.exp}.
Note that \revise{P\'olya SDP \eqref{eq:primal.problem.0.even}} with order $k=0$ provides worse bounds than \revise{Putinar SDP \eqref{eq:sdp.Pu}} with order $k=2$. However, as shown in Table \ref{tab:binary}, P\'ol with order $k=1$ provides the same bounds as Put with order $k=2$.

\subsection{Sparse QCQPs}
\label{sec:sparse.exepr}

\paragraph{Test problems:} We construct randomly generated QCQPs in the form \eqref{eq:constrained.problem.poly.even}-\eqref{eq:semial.set.def.2.even} with correlative sparsity as follows:
\begin{enumerate}
    \item Take a positive integer $u$, $p:=\lfloor n/u\rfloor +1$ and let
    \begin{equation}
        I_c=\begin{cases}
        [u],&\text{if }c=1\,,\\
        \{u(c-1),\dots,uc\},&\text{if }c\in\{2,\dots,p-1\}\,,\\
        \{u(p-1),\dots,n\},&\text{if }c=p\,;
        \end{cases}
    \end{equation}
    \item Generate a quadratic polynomial objective function $f=\sum_{c\in[p]}f_c$ such that for each $c\in[p]$, $f_c\in\R[ x(I_c)]_2$, and the coefficient $f_{c,\a},\a\in\N^{I_c}_2$ of $f_c$ is randomly generated in $(-1,1)$ w.r.t. the uniform distribution;
    \item Take a random point $\mathbf{a}$ such that for every $c\in[p]$, $\mathbf{a}(I_c)$ belongs to the simplex
    \begin{equation}
    \begin{array}{l}
         \Delta^{(c)}:=\{\x(I_c)\in\R^{n_c}\,:\,x_j\ge 0\,,\,j\in I_c\,,\,\sum_{j\in I_c}x_j\le 1\}
    \end{array}
    \end{equation}
    \item Let $q:=\lfloor m_{\text{ineq}}/p\rfloor$ and \begin{equation}
        J_c:=\begin{cases} \{(c-1)q+1,\dots,cq\},&\text{if }c\in[p-1]\,,\\
    \{(p-1)q+1,\dots,l\},&\text{if } c=p\,.
    \end{cases}
    \end{equation}
     For every $c\in[p]$ and every $i\in J_c$, generate a quadratic polynomial $g_i\in\R[\x(I_c)]_2$ by
    \begin{enumerate}
        \item for each $\a\in\N^{I_c}_2\backslash \{\mathbf{0}\}$, taking a random coefficient $\mathbf{G}_{i,\a}$ of $h_i$ in $(-1,1)$ w.r.t. the uniform distribution;
        \item setting $g_{i, 0}:=0.125-\sum_{\a\in\N^{I_c}_2\backslash \{\mathbf{0}\}} g_{j,\a}  \mathbf{a}^\a$.
    \end{enumerate}
    \item Take $g_{i_c}:=1-\sum_{i\in I_c}x_{i}$, for some $i_c\in J_t$, for $c\in[p]$;
    \item Let $r:=\lfloor m_{\text{eq}}/p\rfloor$ and \begin{equation}\label{eq:assign.eqcons2}
        W_c:=\begin{cases} \{(c-1)r+1,\dots,cr\},&\text{if }c\in[p-1]\,,\\
    \{(p-1)r+1,\dots,l\},&\text{if } c=p\,.
    \end{cases}
    \end{equation}
     For every $c\in[p]$ and every $i\in W_c$, generate a quadratic polynomial $h_i\in\R[\x(I_c)]_2$ by
    \begin{enumerate}
        \item for each $\a\in\N^{I_c}_2\backslash \{\mathbf{0}\}$, taking a random coefficient $h_{i,\a}$ of $h_i$ in $(-1,1)$ w.r.t. the uniform distribution;
        \item setting $h_{i, \mathbf{0}}:=-\sum_{\a\in\N^{I_c}_2\backslash \{\mathbf{0}\}} h_{i,\a} \mathbf{a}^\a$.
    \end{enumerate}
    \item Take $g_{i+m_{\text{ineq}}}(\x):=h_i$ and set $g_{i+m_{\text{eq}}+m_{\text{ineq}}}=-h_i$, for $i\in[m_{\text{eq}}]$.
\end{enumerate}
Here $m=m_{\text{ineq}}+2 m_{\text{eq}}$ with $m_{\text{ineq}}$ (resp. $m_{\text{eq}}$) being the number of inequality (resp. equality) constraints except the nonnegative constraints $x_j\ge 0$.
The point $\mathbf{a}$ is a feasible solution of POP \eqref{eq:constrained.problem.poly.even}.

The numerical results are displayed in Table \ref{tab:QCQP.on.unit.ball.sp}.
\begin{table}
    \caption{\small Numerical results for randomly generated QCQPs with correlative sparsity, $n=1000$ and $d=\deg(f)=2$.}
    \label{tab:QCQP.on.unit.ball.sp}
\small
\begin{center}
\begin{tabular}{|c|c|c|c|c|c|c|c|c|c|c|c|c|c|c|c|c|c|c|c|c|}
        \hline
\multirow{2}{*}{Id} & \multirow{2}{*}{Pb}    &
\multicolumn{3}{c|}{POP size}&
\multicolumn{3}{c|}{\revise{Sparse Putinar SDP \eqref{eq:sdp.Pu.sp}}}&      \multicolumn{4}{c|}{\revise{Sparse P\'olya SDP \eqref{eq:primal.problem.0.even.sparse}}}\\ \cline{3-12}&
&{$u$}& {$m_{\text{ineq}}$}& {$m_{\text{eq}}$}& {$k$} & {val} & {time} & {$k$} & {$s$}& {val} & {time}\\
\hline
\hline
1 & \multirow{2}{*}{1} & \multirow{2}{*}{10} & \multirow{2}{*}{201} & \multirow{2}{*}{0} & 1 & $\infty$ & 1.5& \multirow{2}{*}{0} & \multirow{2}{*}{10} & \multirow{2}{*}{ -128.906} & \multirow{2}{*}{15}  \\
2 & & & & &  2 & -129.061 & 385 & & &  &  \\
\hline
\hline
3 & \multirow{2}{*}{2}&\multirow{2}{*}{10} & \multirow{2}{*}{201} & \multirow{2}{*}{200} & 1 & $\infty$ & 2.0& \multirow{2}{*}{1} & \multirow{2}{*}{12} & \multirow{2}{*}{ -65.3195} & \multirow{2}{*}{51}  \\
4 & & & & &  2 & -66.0696 & 475& &  & &  \\
\hline
\hline
5 & \multirow{2}{*}{3}&\multirow{2}{*}{20} & \multirow{2}{*}{201} & \multirow{2}{*}{0} & 1 & $\infty$ & 3.6&  \multirow{2}{*}{0} & \multirow{2}{*}{15} & \multirow{2}{*}{ -65.9794} & \multirow{2}{*}{19} \\
6 & & & & &  2 & -66.1306 & 56360&  & &  &  \\
\hline
\hline
7 & \multirow{2}{*}{4}&\multirow{2}{*}{20} & \multirow{2}{*}{201} & \multirow{2}{*}{200} & 1 & $\infty$ &9 & \multirow{2}{*}{1}& \multirow{2}{*}{22} & \multirow{2}{*}{ -38.2061} &\multirow{2}{*}{319}  \\
8 & & & & &2 & $-$ & $-$ & & &  &  \\
\hline
\end{tabular}

\begin{tabular}{|c|c|c|c|c|c|c|c|c|c|c|c|c|c|c|c|c|}
        \hline
\multirow{2}{*}{Id}&
\multicolumn{4}{c|}{\revise{Sparse Putinar SDP \eqref{eq:sdp.Pu.sp}}}&      \multicolumn{4}{c|}{\revise{Sparse P\'olya SDP \eqref{eq:primal.problem.0.even.sparse}}}\\ \cline{2-9}
&
nmat & msize & nscal & naff &nmat & msize & nscal & naff\\
\hline
\hline
1 & 100 & 12 & 1201 & 7491 & \multirow{2}{*}{299} & \multirow{2}{*}{10} &\multirow{2}{*}{7889} &\multirow{2}{*}{7491} \\
2 & 1300 & 78 & 1 & 135641 & & & &\\
\hline
\hline
3 & 100 & 12 & 1401 & 7491 & \multirow{2}{*}{1299} & \multirow{2}{*}{12} &\multirow{2}{*}{39920}  &\multirow{2}{*}{43813} \\
4 & 1300 & 78 & 15577 & 135641 & &  &  &\\
\hline
\hline
5 & 50 & 22 & 1201 & 12481 & \multirow{2}{*}{399} & \multirow{2}{*}{15} &\multirow{2}{*}{25407}  &\multirow{2}{*}{25109}\\
6 & 1250 & 253 & 1 & 630231 &  &  &  &\\
\hline
\hline
7 & 50 & 22 & 1401 &12481  & \multirow{2}{*}{1149} & \multirow{2}{*}{22} & \multirow{2}{*}{108641} & \multirow{2}{*}{113428} \\
8 & 1250  & 253 & 50513 & 630231 &  & & &\\
\hline
\end{tabular}
\end{center}
\end{table}
\paragraph{Discussion:}
Similarly to the previous discussion, \revise{Sparse P\'olya SDP \eqref{eq:primal.problem.0.even.sparse}} in Table \ref{tab:QCQP.on.unit.ball.sp} is also much faster and more accurate than \revise{Sparse Putinar SDP \eqref{eq:sdp.Pu.sp}}.
For instance, when $u=20$, $m_{\text{ineq}}=201$ and $m_{\text{eq}}=0$, \revise{Sparse P\'olya SDP \eqref{eq:primal.problem.0.even.sparse}} takes $20$ seconds to return the lower bound $-65.9794$ for  $f^\star$, while \revise{Sparse Putinar SDP \eqref{eq:sdp.Pu.sp}} takes $56360$ seconds to return a worse bound of $-66.1306$.
In this case, \revise{Sparse P\'olya SDP \eqref{eq:primal.problem.0.even.sparse}} has $399$ matrix variables with maximal matrix size $15$, while \revise{Sparse Putinar SDP \eqref{eq:sdp.Pu.sp}} has $1250$ matrix variables with maximal matrix size $253$.

\subsection{Robustness certification of deep neural networks}
\label{sec:robus.cert}

In \cite{raghunathan2018semidefinite}, the
robustness certification problem of a multi-layer neural network with ReLU activation function is formulated as the following QCQP for each $y$:
\begin{equation}
\label{eq:cert}
    \begin{array}{rl}
         l^\star_y(\bar{\x},\bar y):=\max\limits_{\x^0,\dots,\x^L} & (\mathbf c_y-\mathbf c_{\bar y})^\top \x^L\\
         \text{s.t.}& 
         x^i_t(x^i_t-\mathbf W^{i-1}_t\x^{i-1})=0\,,\,x^i_t\ge 0\,,\,x^i_t\ge \mathbf W^{i-1}_t\x^{i-1}\,,\\
         &\qquad\qquad\qquad\qquad\qquad\qquad\qquad t\in[m_i]\,,\,i\in[L]\\
         &-\varepsilon\le x_t^0-\bar x_{t}\le \varepsilon\,,\,t\in [m_0]\,,
    \end{array}
\end{equation}
where we use the same notation as in \cite[Section 2]{raghunathan2018semidefinite} and write $\mathbf W^{i-1}=\begin{bmatrix}
\mathbf W_1^{i-1}\\
\dots\\
\mathbf W_{m_i}^{i-1}
\end{bmatrix}$.

We say that the network is certifiably  $\varepsilon$-robust on $(\bar{\x},\bar y)$ if $l_y^\star(\bar{\x},\bar y)<0$ for all $y\ne \bar y$.
\paragraph{Test problems:}
To obtain an instance of weights $\mathbf W^i$, we train a classification model by using Keras\footnote{\url{https://keras.io/api/models/model_training_apis/}}. 
Explicitly, we minimize a loss function as follows:
\begin{equation}
\label{eq:model.weights}
    \begin{array}{rl}
        \min\limits_{\mathbf W^0,\dots,\mathbf W^{L-1}} & \frac{1}{2}\sum_{(\x^0,y^0)\in  \mathcal D}\|f(\x^0)-\mathbf e_{y^0}\|_2^2\,,
    \end{array}
\end{equation}
where the network $f$ is defined as in \cite[Section 2]{raghunathan2018semidefinite} and  $\mathbf e_{y^0}$ has 1 at the $y^0$-th element and zeros at the others.
Here the input set $\mathcal D$ is a part of Boston House Price Dataset (BHPD). 
The class label $y^0$ is assigned to the input $\x^0$.
We classify the inputs from BHPD into $3$ classes according to the MEDian Value of owner-occupied homes (MEDV) in \$1000 as follows:
\begin{equation}
    y^0=\begin{cases}
    1&\text{if }\text{MEDV}<10\,,\\
    2&\text{if }10\le\text{MEDV}<20\,,\\
    3&\text{otherwise}\,.
    \end{cases}
\end{equation}
We also take a clean input label pair $(\bar{\x},\bar y)\notin \mathcal D$ with $\bar y=3$ from BHPD.

As shown in \cite[Section 4.2]{chen2022sublevel}, POP \eqref{eq:cert} has correlative sparsity.
To use our method, we convert \eqref{eq:cert} to a POP on the nonnegative orthant by defining  new nonnegative variables $\bar z_t:=x_t^0-\bar x_{t}+\varepsilon$. 
Doing so, the constraints $ -\varepsilon \leq \bar{x}_t-x_t^0 \leq \varepsilon  $ become $0 \leq  \bar z_t \leq  2\varepsilon $ in the new coordinate system.
Here we choose $\varepsilon=0.1$.
More detailed information for our training model are available in Table \ref{tab:BHPD}.
\begin{table}
\caption{Information for the training model \eqref{eq:model.weights}.}
\label{tab:BHPD}
\begin{center}
\small
\begin{tabular}{ |l|c| }
 \hline
Dataset&  BHPD \\\hline
 Number of hidden layers&  $L=2$\\\hline
Length of an input&  13 \\\hline
Number of inputs&  506\\\hline
Test size&20\%\\\hline
Number of classes&  $k=3$ \\\hline
Numbers of units in layers&  $m=(13, 20, 10)$ \\\hline
Number of weights & 490\\
 \hline
 Opimization method &Adadelta algorithm\footnote{\url{https://keras.io/api/optimizers/adadelta/}}\\\hline
  Accuracy &70\%\\\hline
  Batch size & 128\\\hline
                  Epochs&200\\\hline
\end{tabular}
\end{center}
\end{table}

The numerical results are displayed in Table \ref{tab:cert}.
\begin{table}
    \caption{\small Numerical results for robustness certification on BHPD, $n=43$, $m_\text{ineq}=43$, $m_\text{eq}=30$ and $d=\deg(f)=2$.}
    \label{tab:cert}
\small
\begin{center}
\begin{tabular}{|c|c|c|c|c|c|c|c|c|c|c|c|c|c|c|c|c|c|c|c|c|}
        \hline
\multirow{2}{*}{Id} & \multirow{2}{*}{Pb}    &
\multicolumn{3}{c|}{\revise{Sparse Putinar SDP \eqref{eq:sdp.Pu.sp}}}&      \multicolumn{4}{c|}{\revise{Sparse P\'olya SDP \eqref{eq:primal.problem.0.even.sparse}}}\\ \cline{3-9}&
& {$k$} & {val} & {time} & {$k$} & {$s$}& {val} & {time}\\
\hline
\hline
1 & \multirow{2}{*}{$y=1$}& 1 & 88.1571 & 0.4& \multirow{2}{*}{1} & \multirow{2}{*}{35} &  \multirow{2}{*}{\bf -11.8706} & \multirow{2}{*}{625}\\
2 & &  2 & $-$ & $-$&  &  &  & \\
\hline
\hline
3 & \multirow{2}{*}{$y=2$}& 1 & 208.934& 0.4& \multirow{2}{*}{1} & \multirow{2}{*}{35} &  \multirow{2}{*}{\bf -13.3240} & \multirow{2}{*}{518}\\
4 & &  2 & $-$ & $-$&  & &  &\\
\hline
\end{tabular}

\begin{tabular}{|c|c|c|c|c|c|c|c|c|c|c|c|c|c|c|c|c|c|c|c|c|}
        \hline
\multirow{2}{*}{Id}&
\multicolumn{4}{c|}{\revise{Sparse Putinar SDP \eqref{eq:sdp.Pu.sp}}}&      \multicolumn{4}{c|}{\revise{Sparse P\'olya SDP \eqref{eq:primal.problem.0.even.sparse}}}\\ \cline{2-9}
&
nmat & msize & nscal & naff &nmat & msize & nscal & naff\\
\hline
\hline
1,3 & 23 & 22 & 117 & 737 & \multirow{2}{*}{297} & \multirow{2}{*}{35} &\multirow{2}{*}{46233} &\multirow{2}{*}{28195}\\
2,4 & 97 &595 &14431 & 86285 &  &  & &\\
\hline
\end{tabular}
\end{center}
\end{table}
\paragraph{Discussion:}
Compared to \revise{Sparse Putinar SDP \eqref{eq:sdp.Pu.sp}}, \revise{Sparse P\'olya SDP \eqref{eq:primal.problem.0.even.sparse}} provides better upper bounds in less total time.
Moreover, in Table \ref{tab:cert}, the values returned by \revise{Sparse Putinar SDP \eqref{eq:sdp.Pu.sp}} with $k=1$ are positive and are much larger than the negative ones returned by \revise{Sparse P\'olya SDP \eqref{eq:primal.problem.0.even.sparse}} with $k=1$.
Since in Table \ref{tab:cert}, the upper bounds on $l^\star_y(\bar{\x},\bar y)$ are negative, for all $y\ne \bar y$, $l^\star_y(\bar{\x},\bar y)$ must be negative.
Thus, we conclude that this network is certifiably $\varepsilon$-robust on $(\bar{\x},\bar y)$.

\section{Conclusion}
We have proposed in this paper semidefinite relaxations for solving dense POPs on the nonnegative orthant.
The basic idea is to apply a positivity certificate involving SOS of monomials for a POP with input polynomials being  even in each variable. 
It allows us to obtain a hierarchy of  linear relaxations. 
Afterwards we replace each SOS of monomials by an SOS associated with a block-diagonal Gram matrix, where each block has a prescribed size. 
This ensures the efficiency of the corresponding hierarchy of SDP relaxations in practice.
The convergence is still maintained, as it is based on the convergence guarantee of the hierarchy of linear relaxations. 
The resulting convergence rate of  $\mathcal{O}(\varepsilon^{-c})$ is similar to the one of Baldi and Mourrain \cite{baldi2023effective}. 

As a topic of further applications, we would like to use our method for
solving large-scale POPs for phase retrieval and feedforward neural networks.

\paragraph{Acknowledgements.} 
The first author was supported by the MESRI funding from EDMITT.
This work has benefited from the European Union's Horizon 2020 research and innovation programme under the Marie Sklodowska-Curie Actions, grant agreement 813211 (POEMA) as well as from the AI Interdisciplinary Institute ANITI funding, through the French ``Investing for the Future PIA3'' program under the Grant agreement n$^{\circ}$ANR-19-PI3A-0004.
This research is part of the programme DesCartes and is supported by the National Research Foundation, Prime Minister's Office, Singapore under its Campus for Research Excellence and Technological Enterprise (CREATE) programme.

\section{Appendix}
\subsection{Preliminary material}
For each $q=\sum_{\a}q_\a \x^\a\in\R[\x]$, we note $\| q \|: = \max_\a   \frac{| {{q_\a }} |}{c_\a}$ with $c_\a := \frac{|\a|!}{\alpha_1!\dots\alpha_n!}$ for each $\alpha \in \N^n$.
This defines a norm on the real vector space $\R[\x]$. 
Moreover, for $p_1,q_2\in\R[\x]$, we have
\begin{equation}\label{eq:mul.ineq}
    \|q_1q_2\|\le \|q_1\|\|q_2\|\,,
\end{equation}
according to \cite[Lemma 8]{schweighofer2004complexity}.

We recall the following bound for central binomial coefficient stated in \cite[page 590]{jukna2012boolean}:
\begin{lemma}\label{lem:bound.cetral.coe}
For all $t\in\N_{> 0}$, it holds that $\binom{2t}{t}\frac{1}{2^{2t}}\le \frac{1}{\sqrt{\pi t}}$.
\end{lemma}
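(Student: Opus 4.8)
The plan is to reduce the claim to a monotonicity statement about the normalized central binomial coefficient together with its known asymptotic value. First I would rewrite the left-hand side as a finite Wallis-type product: using $\binom{2t}{t}=\frac{(2t)!}{(t!)^2}$ together with the elementary identity $\prod_{k=1}^t(2k-1)=\frac{(2t)!}{2^t\,t!}$ (which follows by splitting $(2t)!$ into its even and odd factors), one obtains
\begin{equation}
\frac{1}{2^{2t}}\binom{2t}{t}=\prod_{k=1}^t\frac{2k-1}{2k}=:P_t\,.
\end{equation}
Hence it suffices to prove $P_t\le\frac{1}{\sqrt{\pi t}}$, equivalently $a_t:=\sqrt{t}\,P_t\le\frac{1}{\sqrt{\pi}}$ for every $t\in\N_{>0}$.

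Second, I would show that the sequence $(a_t)_{t\in\N_{>0}}$ is nondecreasing. Since $P_{t+1}=P_t\cdot\frac{2t+1}{2t+2}$, the consecutive ratio is
\begin{equation}
\frac{a_{t+1}}{a_t}=\sqrt{\frac{t+1}{t}}\cdot\frac{2t+1}{2(t+1)}\,,
\end{equation}
and after squaring, the inequality $a_{t+1}\ge a_t$ reduces to $(2t+1)^2\ge 4t(t+1)$, i.e.\ $4t^2+4t+1\ge 4t^2+4t$, which is trivially true. It is worth noting that a naive induction attempting to propagate the upper bound $P_t\le\frac{1}{\sqrt{\pi t}}$ fails precisely because this comparison points the other way; this is why monotonicity, rather than induction, is the correct tool here.

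Third, I would combine monotonicity with the limit $\lim_{t\to\infty}a_t=\frac{1}{\sqrt{\pi}}$, which is a standard consequence of Stirling's formula (equivalently, of the Wallis product $\prod_{k\ge 1}\frac{(2k)^2}{(2k-1)(2k+1)}=\frac{\pi}{2}$). A nondecreasing sequence is bounded above by its supremum, and for a convergent sequence the supremum equals the limit; therefore $a_t\le\frac{1}{\sqrt{\pi}}$ for all $t\in\N_{>0}$, and dividing by $\sqrt{t}$ gives exactly $\frac{1}{2^{2t}}\binom{2t}{t}=P_t\le\frac{1}{\sqrt{\pi t}}$. The only nonelementary ingredient is the evaluation of the limiting constant $\frac{1}{\sqrt{\pi}}$, and this is the one place I would invoke an external result rather than rederive it; everything else is elementary algebra, with the base value $a_1=\frac{1}{2}<\frac{1}{\sqrt{\pi}}$ already consistent with the monotone convergence to $\frac{1}{\sqrt{\pi}}$.
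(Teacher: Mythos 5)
Your proof is correct and follows essentially the same route as the paper's: the paper likewise writes $2t\bigl[\tbinom{2t}{t}2^{-2t}\bigr]^2$ as a Wallis-type product, observes it is increasing, and identifies the limit $2/\pi$ via Wallis's formula, which is just the squared form of your monotone sequence $a_t=\sqrt{t}\,P_t\nearrow 1/\sqrt{\pi}$. Your ratio computation $(2t+1)^2\ge 4t(t+1)$ and the limit identification are both sound, so no issues.
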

\if{\begin{proof}
For all $t\in\N_{> 0}$, we get
\begin{equation}
\begin{array}{l}
     2t\left[\binom{2t}{t}\frac{1}{2^{2t}}\right]^2=\frac{1}{2}\frac{3}{2}\frac{3}{4}\frac{5}{5}\dots\frac{2t-1}{2t-2}\frac{2n-1}{2n}=\frac{1}{2}\prod_{j=2}^t\left(1+\frac{1}{4j(j+1)}\right).
\end{array}
\end{equation}
By Wallis's formula, the middle expression converges to $\frac{2}{\pi}$. 
The right hand side is increasing, yielding the desired result.
\end{proof}}\fi
Define the simplex 
\begin{equation}\label{eq:simplex.def2}
\begin{array}{l}
    \Delta_n:=\{\x\in\R^n\,:\,x_j\ge 0\,,\,j\in[n]\,,\,\sum_{j\in[n]}x_j=1\}\,.
\end{array}
\end{equation}

We recall the degree bound for P\'olya's Positivstellensatz \cite{polya1928positive}:
\begin{lemma}\label{lem:bound.polya}
(Powers and Reznick \cite{powers2001new})
If $q$ is a homogeneous polynomial of degree $d$ positive on $\Delta_n$,
then for all $k\in\N$ satisfying 
\begin{equation}\label{eq:bound.polya2}
    k\ge \frac{d(d-1)\|q\|}{2\min_{\x\in\Delta_n}q(\x)} -d\,,
\end{equation}
$(\sum_{j\in[n]}x_j)^kq$ has positive coefficients.
\end{lemma}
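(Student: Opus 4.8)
The plan is to reduce the assertion to the positivity of each individual coefficient of $(\sum_{j}x_j)^k q$, and to analyze that coefficient through a comparison of sampling with and without replacement. Write $m := k+d$ and express $q$ in multinomial-normalized form $q=\sum_{|\beta|=d}a_\beta c_\beta \x^\beta$, where $c_\beta = d!/(\beta_1!\cdots\beta_n!)$; then $\|q\|=\max_\beta|a_\beta|=:L$ and $\lambda:=\min_{\x\in\Delta_n}q(\x)>0$. First I would compute that, for any $\alpha$ with $|\alpha|=m$, the coefficient of $\x^\alpha$ in $(\sum_j x_j)^k q$, after division by $c_\alpha$, equals $c_\alpha^\star:=\binom{m}{d}^{-1}\sum_{|\beta|=d}a_\beta\prod_i\binom{\alpha_i}{\beta_i}$. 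By Vandermonde's identity the weights $H_\beta:=\binom{m}{d}^{-1}\prod_i\binom{\alpha_i}{\beta_i}$ are nonnegative and sum to $1$, so $c_\alpha^\star$ is a weighted average of the numbers $a_\beta$; since $c_\alpha>0$, it suffices to prove $c_\alpha^\star>0$ for every $\alpha$ with $|\alpha|=m$.

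Second, I would identify the two relevant probability laws. The weights $H_\beta$ are exactly the multivariate hypergeometric probabilities of obtaining color-count $\beta$ when drawing $d$ balls without replacement from an urn with $\alpha_i$ balls of color $i$ ($i\in[n]$), so $c_\alpha^\star=\mathbb{E}_{\mathrm{wo}}[a_B]$. On the other hand, setting $\bar\alpha:=\alpha/m\in\Delta_n$ and using homogeneity, $q(\bar\alpha)=\sum_{|\beta|=d}a_\beta c_\beta\bar\alpha^\beta=\mathbb{E}_{\mathrm{w}}[a_B]$ is the expectation of the same quantity under the multinomial law $B$ obtained by drawing $d$ balls \emph{with} replacement from the same urn. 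Since $\bar\alpha\in\Delta_n$, we have $q(\bar\alpha)\ge\lambda$.

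Third, and this is the technical heart, I would compare the two schemes by conditioning on the event $E$ that the $d$ with-replacement draws are pairwise distinct. Conditioned on $E$, the with-replacement sample has precisely the without-replacement distribution, which yields the identity $q(\bar\alpha)=\Pr(E)\,c_\alpha^\star+\Pr(E^c)\,\mathbb{E}_{\mathrm{w}}[a_B\mid E^c]$. Solving for $c_\alpha^\star$ and using $\mathbb{E}_{\mathrm{w}}[a_B\mid E^c]\le L$, $q(\bar\alpha)\ge\lambda$, and $\Pr(E)=\prod_{j=0}^{d-1}(1-j/m)\ge 1-\binom{d}{2}/m$ (so that $\Pr(E^c)\le d(d-1)/(2m)$), I obtain $c_\alpha^\star\ge\big(\lambda-\Pr(E^c)L\big)/\Pr(E)\ge\lambda-\tfrac{d(d-1)}{2m}L$, the last step using $\Pr(E)\le 1$ together with nonnegativity of the numerator. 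Hence $c_\alpha^\star>0$ as soon as $m>\tfrac{d(d-1)L}{2\lambda}$, i.e. $k>\tfrac{d(d-1)\|q\|}{2\min_{\Delta_n}q}-d$, which is the stated bound. I expect the main obstacle to lie exactly in this step: the crude total-variation estimate between the two laws loses a factor of two and yields only $d(d-1)/m$, so the sharp constant $d(d-1)/2$ relies on the \emph{one-sided} conditioning identity above and on dividing by $\Pr(E)\le 1$ rather than bounding $|c_\alpha^\star-q(\bar\alpha)|$ symmetrically. A secondary care point is the boundary case where some $\alpha_i=0$, so $\bar\alpha$ lies on a face of $\Delta_n$; this is harmless, since $q\ge\lambda$ on all of $\Delta_n$ and the urn simply contains no balls of that color. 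Strictness at the endpoint follows from strictness of the Weierstrass product inequality for $d\ge 2$, while for $d=1$ the bound reads $k\ge -1$ and the claim is immediate.
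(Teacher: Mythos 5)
Your proof is correct. Note that the paper does not prove this lemma at all: it is recalled verbatim from Powers--Reznick \cite{powers2001new}, so the only meaningful comparison is with their original argument. Your coefficient extraction $c_\alpha^\star=\binom{m}{d}^{-1}\sum_{|\beta|=d}a_\beta\prod_i\binom{\alpha_i}{\beta_i}$, the Vandermonde normalization, and the identification of the two probability laws are all exact, and the conditioning identity (with-replacement draws conditioned on distinctness give the hypergeometric law) is the right one-sided mechanism to recover the constant $d(d-1)/2$. This is, in substance, a probabilistic repackaging of Powers--Reznick's proof: their ``lowered polynomial'' $p_{(t)}$ built from falling factorials $x(x-t)\cdots(x-(j-1)t)$ evaluated at $\bar\alpha$ with $t=1/m$ is precisely your hypergeometric expectation, and their estimate $p_{(t)}\ge p-\frac{d(d-1)}{2}tL$ on $\Delta_n$ is your bound $\Pr(E^c)\le\binom{d}{2}/m$ combined with $|a_\beta|\le L$. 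What your phrasing buys is conceptual transparency (no bookkeeping with elementary symmetric functions of $t,2t,\dots$); what it costs is nothing essential. One small caveat: the original theorem requires the \emph{strict} inequality $N>\frac{d(d-1)L}{2\lambda}-d$, whereas the paper (slightly carelessly) writes $k\ge$; at the exact endpoint your argument only yields $c_\alpha^\star\ge 0$, and your appeal to strictness of the Weierstrass product inequality covers $d\ge 3$ but not $d=2$, where $\Pr(E^c)=1/m$ holds with equality. Since this imprecision originates in the paper's restatement rather than in your argument, it is a cosmetic point, but if you want the non-strict version you would need to rule out simultaneous equality in $q(\bar\alpha)\ge\lambda$ and $\mathbb{E}_{\mathrm{w}}[a_B\mid E^c]\le L$, or simply adopt the strict hypothesis of the original.
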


Let us recall the concept and the properties of polynomials even in each variable in \cite[Definition 3.3]{schabert2019uniform}. 
A polynomial $q$ is even in each variable if for every $j\in[n]$, 
\begin{equation}
    q(x_1,\dots,x_{j-1},-x_j,x_{j+1},\dots,x_{n})=q(x_1,\dots,x_{j-1},x_j,x_{j+1},\dots,x_{n})\,.
\end{equation}

If $q$ is even in each variable, then there exists a polynomial  $\tilde{q}$ such that $q=\tilde q(x_1^2,\dots,x_n^2)$.
Indeed, let $q=\sum_{\a\in\N^n}q_\a \x^\a$ be a polynomial even in each variable.
Let $j\in[n]$ be fixed.
Then 
$q(\x)=\frac{1}2(q(\x)+q(x_1,\dots,x_{j-1},-x_j,x_{j+1},\dots,x_{n}))$.
It implies that $q_\a=0$ if $\alpha_j$ is odd.
Thus, $q=\sum_{\a\in\N^n}q_{2\a} \x^{2\a}$ since $j$ is arbitrary in $[n]$.
This yields $\tilde{q}=\sum_{\a\in\N^n}q_{2\a} \x^{\a}$.

For convenience, we denote $\x^2:=(x_1^2,\dots,x_n^2)$.
Moreover, if $q$ is even in each variable and homogeneous of  degree $2d_q$, then $\tilde{q}$ is homogeneous of degree $d_q$.
Conversely, if $q$ is a polynomial of degree at most $2d$ such that $q$ is even in each variable, then the degree-$2d$ homogenization of $q$ is even in each variable.
\subsection{The proof of Theorem \ref{theo:complex.putinar.vasilescu.even}}
\label{proof:PV.SOSmono}

\begin{proof}
Let $\varepsilon>0$.
By assumption, $\deg(f)=2d_f$, $\deg(g_i)=2d_{g_i}$ for some $d_f,d_{g_i}\in\N$,  for $j\in [m]$.
\paragraph{Step 1: Converting to polynomials on the nonnegative orthant.}
We claim that $\tilde{f}$ is nonnegative on the semialgebraic set
\begin{equation}\label{eq:set.S.tilde}
    \tilde S:=\{\x\in\R^n\,:\,x_j\ge 0\,,\,j\in[n]\,,\,\tilde{g}_i(\x)\ge 0\,,\,i\in[m]\}\,.
\end{equation}
Let $y\in \tilde S$. 
Set $\z=(\sqrt{y_1},\dots,\sqrt{y_n})$.
Then $g_i(\z)=\tilde{g}_i(\z^2)=\tilde{g}_i(\y)\ge 0$, for $i\in[m]$.
By assumption, $\tilde f(\y)=\tilde f(\z^2)=f(\z)\ge 0$.
It implies that $\tilde{f}+\varepsilon(\sum_{j=1}^n x_j)^{d_f}$ is homogeneous and positive on $\tilde S\backslash \{\mathbf{0}\}$.

To prove the first statement, we proceed exactly as in the proof of \cite[Theorem 2.4]{dickinson2015extension} for $\tilde{f}+\varepsilon(\sum_{j=1}^n x_j)^{d_f}$ and derive the bound on the degree of polynomials having positive coefficients when applying P\'olya's Positivstellensatz.
To obtain \eqref{eq:represent.even}, we replace $\x$ by $\x^2$ in the representation of $\tilde{f}+\varepsilon(\sum_{j=1}^n x_j)^{d_f}$.

We shall prove the second statement. Assume that $S$ has nonempty interior.
Set $\bar m:=m+n$ and $ g_{m+j}:=x_j^2$ with $d_{g_{m+j}}:=1$, $j\in[n]$.
Then $\tilde g_{m+j}:=x_j$, $j\in[n]$, and
\begin{equation}
    \tilde S:=\{\x\in\R^n\,:\,\tilde{g}_i(\x)\ge 0\,,\,i\in[\bar m]\}\,.
\end{equation}
Note that $\deg(\tilde g_i)=d_{g_i}$, $i\in[\bar m]$.
Since $S$ has nonempty interior and $\cup_{j=1}^n\{\x\in\R^n\,:\,x_j=0\}$    has zero Lebesgue measure in $\R^n$,  $S\backslash ( \cup_{j=1}^n\{\x\in\R^n\,:\,x_j=0\})$ also has nonempty interior. 
Then there exists $\mathbf{a}\in  S\backslash ( \cup_{j=1}^n\{\x\in\R^n\,:\,x_j=0\})$ such that $g_i(\mathbf{a})>0$, $i\in[m]$.
Let $\mathbf{b}=(\sqrt{|a_1|},\dots,\sqrt{|a_n|})$.
Then $\mathbf{b}\in (0,\infty)^n$ and $\mathbf b^2=({|a_1|},\dots,{|a_n|})$.
Since each $g_i$ is even in each variable, $\tilde g_i(\mathbf{b})=g_i(\mathbf{b}^2)=g_i(\mathbf{a})>0$, $i\in[m]$, yielding $\tilde S$ has nonempty interior.
\paragraph{Step 2: Construction of the positive weight functions.}
We process similarly to the proof of \cite[Theorem 1]{mai2022complexity} (see \cite[Appendix A.2.1]{mai2022complexity}) to obtain functions  $\bar\varphi_{j}:\R^n\to\R$, $j\in[\bar m]$, such that,
\begin{enumerate}
    \item $\bar\varphi_{j}$ is positive and bounded from above by  $C_{\bar\varphi_{j}}=\bar r_j\varepsilon^{-r_j}$  on $ B(\mathbf 0,\sqrt{n}+j)$ for some positive constants $\bar r_j$ and $r_j$ independent of $\varepsilon$.
    \item $\bar\varphi_{j}$ is Lipschitz with Lipschitz constant $L_{\bar\varphi_{j}}=\bar t_j\varepsilon^{-t_j}$ for some positive constants $\bar t_j$ and $t_j$ independent of $\varepsilon$.
    \item The inequality
    \begin{equation}\label{eq:nonega2}
    \tilde f+\varepsilon - \sum_{i=1}^{\bar m} \bar\varphi_{i}^2\tilde g_{i}\ge \frac{\varepsilon}{2^{\bar m}}\text{ on }[-1,1]^n\,,
\end{equation}
holds.
\end{enumerate}
Note that we do not need to prove the even property for each weight $\bar \varphi_i$ above.
\paragraph{Step 3: Approximating with Bernstein polynomials.}
For each $i\in[\bar m]$, we now approximate $\bar \varphi_i$ on $[-1,1]^n$ with the following Bernstein polynomials defined as in \cite[Definition 1]{mai2022complexity}: 
\begin{equation}
    B_i^{(d)}(\x)=B_{\y\mapsto \bar \varphi_i(2\y-\e),d\e}\left(\frac{\x+\e}{2}\right)\,,\quad d\in\N\,,
\end{equation}
with $\e=(1,\dots,1)\in\R^n$.
By using \cite[Lemma 6]{mai2022complexity}, for all $\x\in [-1,1]^n$, for $i\in[\bar m]$,
\begin{equation}
    |B_i^{(d)}(\x) - \bar \varphi_i(\x) | \le
{L_{\bar \varphi_i}} \biggl(\frac{n}{d} \biggr)^{\frac{1}2}\,,\quad d\in\N\,,
\end{equation}
 and for all $\x\in [-1,1]^n$, for $i\in[\bar m]$:
\begin{equation}
\begin{array}{l}
    |B_i^{(d)}(\x)|\le  \sup_{\x\in[-1,1]^n}|\bar\varphi_i(\x)|\le C_{\bar\varphi_i}\,.
\end{array}
\end{equation}

For $i\in[\bar m]$, let 
\begin{equation}
    d_i:=2u_i\quad\text{with}\quad u_i=\Bigl\lceil{  \frac{2C_{\tilde g_i}^2C_{\bar \varphi_i}^2n L_{\bar \varphi_i}^2 (\bar m+1)^22^{2\bar m}}{\varepsilon^2}} \Bigr\rceil\,,
\end{equation}
 where $C_{\tilde g_i}$ is an upper bound of $|\tilde g_i|$ on ${B(\mathbf 0,\sqrt{n}+i)}$.
Set $q_i:=B_i^{(d_i)}$, $i\in[\bar m]$.
Then for all $\x\in[-1,1]^n$, 
\begin{equation}
\begin{array}{rl}
   |q_i(\x) - \bar \varphi_i(\x) |
   &= |B_i^{(d_i)}(\x) - \bar \varphi_i(\x) |  \\
   & \le
{L_{\bar \varphi_i}} \left(\frac{n}{d_i} \right)^{\frac{1}2}   \\
&\le
{L_{\bar \varphi_i}} \left(\frac{n}{ \frac{4C_{\tilde g_i}^2C_{\bar \varphi_i}^2n L_{\bar \varphi_i}^2 (\bar m+1)^22^{2\bar m}}{\varepsilon^2}} \right)^{\frac{1}2}\\
& = \frac{\varepsilon}{2C_{\tilde g_i}C_{\bar \varphi_i}(\bar m+1)2^{\bar m}}\,.
\end{array}
\end{equation}
\paragraph{Step 4: Estimating the lower and upper bounds of $\tilde f(\x)+\varepsilon - \sum_{i=1}^{\bar m} q_{i}(\x)^2\tilde g_{i}(\x)$ on $\Delta_n$.}
From these and \eqref{eq:nonega2}, for all $\x\in \Delta_n$, \begin{equation}
    \begin{array}{rl}
     & \tilde f(\x)+\varepsilon - \sum_{i=1}^{\bar m} q_{i}(\x)^2\tilde g_{i}(\x)\\
       =&\tilde f(\x)+\varepsilon - \sum_{i=1}^{\bar m} \bar\varphi_{i}(\x)^2\tilde g_{i}(\x) + \sum_{i=1}^{\bar m} \tilde g_{i}(\x)[\bar\varphi_{i}(\x)^2-q_{i}(\x)^2]\\
          \ge& \frac{\varepsilon}{2^{\bar m}} - \sum_{i=1}^{\bar m} |\tilde g_{i}(\x)||\bar\varphi_{i}(\x)+q_{i}(\x)||\bar\varphi_{i}(\x)-q_{i}(\x)|\\
         \ge& \frac{\varepsilon}{2^{\bar m}} - \sum_{i=1}^{\bar m} C_{\tilde g_{i}}(|\bar\varphi_{i}(\x)|+|q_{i}(\x)|)\frac{\varepsilon}{2C_{\tilde g_i}C_{\bar \varphi_i}(\bar m+1)2^{\bar m}}\\
         \ge &\frac{\varepsilon}{2^{\bar m}} - \sum_{i=1}^{\bar m} 2C_{\tilde g_{i}}C_{\bar\varphi_{i}}\frac{\varepsilon}{2C_{\tilde g_i}C_{\bar \varphi_i}(\bar m+1)2^{\bar m}}\\
         =& \frac{\varepsilon}{2^{\bar m}} - \frac{\bar m\varepsilon}{(\bar m+1)2^{\bar m}}=\frac{\varepsilon}{(\bar m+1)2^{\bar m}}\,.
    \end{array}
\end{equation}
Thus,
\begin{equation}\label{eq:lower.bound.tildef}
    \begin{array}{l}
         \tilde f+\varepsilon - \sum_{i=1}^{\bar m} q_{i}^2\tilde g_{i}\ge  \frac{\varepsilon}{(\bar m+1)2^{\bar m}}\text{ on } \Delta_n\,.
    \end{array}
\end{equation}
\paragraph{Step 5: Estimating the upper bound of $\|q_i\|$.}
For $i\in[\bar m]$, we write
\begin{equation}
\begin{array}{rl}
      q_i=B_i^{(2u_i)}
     =& \sum_{k_1=0}^{2u_i} \dots \sum_{k_n=0}^{2u_i}  \bar \varphi_i\left(\frac{k_1-u_i}{u_i},\dots,\frac{k_n-u_i}{u_i}\right)\\
     &\qquad\qquad\qquad\qquad
\times\prod_{j=1}^n \left[\binom{2u_i}{k_j} \left(\frac{x_j+1}2\right)^{k_j} \left(\frac{1-x_j}2\right)^{2u_i-k_j} \right]\,.
\end{array}
\end{equation}
Then
\begin{equation}
    \deg(q_i)\le 2nu_i\,,
\end{equation}
for $i\in[\bar m]$.
From \eqref{eq:mul.ineq}, we have
\begin{equation}
    \begin{array}{rl}
          \|q_i\| 
         \le & \sum_{k_1=0}^{2u_i} \dots \sum_{k_n=0}^{2u_i}  \left|\bar \varphi_i\left(\frac{k_1-u_i}{u_i},\dots,\frac{k_n-u_i}{u_i}\right)\right|\\
     &\qquad\qquad\qquad\qquad
\times
\prod_{j=1}^n \left[\binom{2u_i}{k_j} \frac{1}{2^{2u_i}}\|x_j+1\|^{k_j} \|1-x_j\|^{2u_i-k_j} \right]\\
\le & \sum_{k_1=0}^{2u_i} \dots \sum_{k_n=0}^{2u_i}  C_{\bar \varphi_i}\prod_{j=1}^n \left(\binom{2u_i}{u_i} \frac{1}{2^{2u_i}} \right)\\
=& C_{\bar \varphi_i} \left(\binom{2u_i}{u_i} \frac{2u_i+1}{2^{2u_i}}\right)^n \\
\le & C_{\bar \varphi_i} \left( \frac{2u_i+1}{\sqrt{\pi {u_i}}}\right)^n=:T_{q_i}\,.
    \end{array}
\end{equation}
The second inequality is due to $\|x_j+1\|= \|1-x_j\|=1$ and $\binom{2u_i}{u_i}\ge \binom{2u_i}{k_j}$, for $k_j=0,\dots,2u_i$.
The third inequality is implied from Lemma \ref{lem:bound.cetral.coe}.

\paragraph{Step 6: Converting to homogeneous polynomials.}
Thanks to \eqref{eq:lower.bound.tildef}, we get
\begin{equation}
\begin{array}{l}
  \tilde{f}+2\varepsilon -  \sum_{i\in[\bar m]} (q_{i}^2+\frac{\varepsilon}{\bar m C_{\tilde g_i}})\tilde{g}_{i}\ge \frac{\varepsilon}{(\bar m+1)2^{\bar m}}\quad\text{ on } \Delta_n\,,
\end{array}
\end{equation}
since $|\tilde g_i|\le C_{\tilde g_i}$ on $\Delta_n$.
Note that $\tilde{f},\tilde{g_i}$ are homogeneous polynomials of degree $d_f,d_{g_i}$, respectively. 

For each $q\in\R[\x]_d$,  $\hat q$ is a $d$-homogenization of $q$ if 
\begin{equation}
    \begin{array}{l}
         \hat q=\sum_{t=0}^{d}h^{(t)}(\sum_{j\in[n]}x_j)^{d-t}\,,
    \end{array}
\end{equation}
for some $h^{(t)}$ is the homogeneous polynomial of degree $t$ satisfying $q=\sum_{t=0}^{d}h^{(t)}$.
In this case, $\hat q=q$ on $\Delta_n$.

Let $p_i:=\hat q_{i}^2+\frac{\varepsilon}{\bar mC_{\tilde g_i}}(\sum_{j\in[n]}x_j)^{4nu_i}$ with $\hat q_i$ being a $2nu_i$-homogenization of $q_i$, for $i\in[\bar m]$.
Then $p_i$ is a homogeneous polynomial of degree $4nu_i$,
\begin{equation}
\begin{array}{l}
     p_i=q_{i}^2+\frac{\varepsilon}{\bar m C_{\tilde g_i}}\ge \frac{\varepsilon}{\bar m C_{\tilde g_i}}\text{ on }\Delta_n\,,
\end{array}
\end{equation}
and 
\begin{equation}
\begin{array}{l}
     \|p_i\|\le \|q_{i}\|^2+\frac{\varepsilon}{\bar m C_{\tilde g_i}}\le T_{q_i}^2+\frac{\varepsilon}{\bar m C_{\tilde g_i}}=:T_{p_i}\,.
\end{array}
\end{equation}
Set $D:=\max\{d_f,4nu_i +d_{g_i}\,:\,i\in[\bar m]\}$ and 
\begin{equation}\label{eq:rep.F}
\begin{array}{rl}
F:=&(\sum_{j\in[n]}x_j)^{D-d_f}(\tilde f+2\varepsilon(\sum_{j\in[n]}x_j)^{d_f})\\
&- \sum_{i\in[\bar m]} \tilde g_{i} p_i(\sum_{j\in[n]}x_j)^{D-4nu_i-d_{g_i}}\,.
\end{array}
\end{equation}
Then $F$ is a homogeneous polynomial of degree $D$ and 
\begin{equation}
\begin{array}{l}
  F=\tilde{f}+2\varepsilon -  \sum_{i\in[\bar m]} (q_{i}^2+\frac{\varepsilon}{\bar m C_{\tilde g_i}})\tilde{g}_{i}\ge \frac{\varepsilon}{(\bar m+1)2^{\bar m}}\quad\text{ on } \Delta_n\,,
\end{array}
\end{equation}
Moreover,
\begin{equation}
\begin{array}{rl}

\|F\|\le&\|\sum_{j\in[n]}x_j\|^{D-d_f}(\|\tilde f\|+2\varepsilon\|\sum_{j\in[n]}x_j\|^{d_f})\\
&+ \sum_{i\in[\bar m]} \|\tilde g_{i}\| \|p_i\|\|\sum_{j\in[n]}x_j\|^{D-4nu_i-d_{g_i}}\\
  \le & \|\tilde{f}\|+2\varepsilon +  \sum_{i\in[\bar m]} T_{p_i}\|\tilde{g}_{i}\|=:T_F\,,
\end{array}
\end{equation}
since $\|\sum_{j\in[n]}x_j\|=1$.
\paragraph{Step 7: Applying the degree bound of P\'olya's Positivstellensatz.}
Using Lemma \ref{lem:bound.polya}, we obtain:
\begin{itemize}
    \item For all $k\in\N$ satisfying 
\begin{equation}
    k\ge \frac{D(D-1)T_F}{\frac{\varepsilon}{(\bar m+1)2^{\bar m}}} =:K_0\,,
\end{equation}
$(\sum_{j\in[n]}x_j)^kF$ has positive coefficients.
\item For each $i\in[\bar m]$ and for all $k\in\N$ satisfying 
\begin{equation}
    k\ge \frac{4nu_i(4nu_i-1)T_{p_i}}{\frac{\varepsilon}{\bar mC_{\tilde g_i}}} =:K_i\,,
\end{equation}
$(\sum_{j\in[n]}x_j)^kp_i$ has positive coefficients.
\end{itemize}
Notice that $K_i$, $i=0,\dots,\bar m$, are obtained by composing finitely many times the following operators: ``$+$", ``$-$", ``$\times$", ``$\div$", ``$|\cdot|$", ``$\lceil\cdot\rceil$", ``$(x_1,x_2)\mapsto\max\{x_1,x_2\}$", ``$(x_1,x_2)\mapsto\min\{x_1,x_2\}$", ``$(\cdot)^{\alpha_m}$" and ``$\sqrt{\cdot}$", where all arguments possibly depend on $\varepsilon$.
Without loss of generality, let $\bar{\mathfrak{c}},\mathfrak{c}$ be positive constants independent of $\varepsilon$ such that $\bar{\mathfrak{c}}\varepsilon^{-\mathfrak{c}}\ge \max\{K_0,\dots,K_{\bar m}\}$.

Let $k\ge \bar{\mathfrak{c}}\varepsilon^{-\mathfrak{c}}$ be fixed.
Multiplying two sides of \eqref{eq:rep.F} with $(\sum_{j\in[n]}x_j)^k$, we get
\begin{equation}
  \begin{array}{rl}
s_0=&(\sum_{j\in[n]}x_j)^{D-d_f+k}(\tilde f+2\varepsilon(\sum_{j\in[n]}x_j)^{d_f})\\
&- \sum_{i\in[\bar m]} \tilde g_{i} s_i(\sum_{j\in[n]}x_j)^{D-4nu_i-d_{g_i}}\,,
\end{array}  
\end{equation}
where $s_0:=(\sum_{j\in[n]}x_j)^kF$ and $s_i:=(\sum_{j\in[n]}x_j)^kp_i$ are homogeneous polynomials having nonnegative coefficients.
Replacing $\x$ by $\x^2$, we obtain:
\begin{equation}
  \begin{array}{rl}
\|\x\|_2^{2(D-d_f+k)}( f+2\varepsilon\|\x\|_2^{2d_f})=\sigma_0+ \sum_{i\in[m]}  g_{i} \sigma_i\,,
\end{array}  
\end{equation}
where 
\begin{equation}
    \begin{array}{rl}
        \sigma_0&=s_0(\x^2)+ \sum_{j\in[n]} \tilde g_{j+m}(\x^2) s_{j+m}(\x^2)\|\x\|_2^{2(D-4nu_{j+m}-d_{g_{j+m}})}  \\
         &= s_0(\x^2)+\sum_{j\in[n]} x_j^2 s_{j+m}(\x^2)\|\x\|_2^{2(D-4nu_{j+m}-d_{g_{j+m}})} \,,
    \end{array}
\end{equation}
and
\begin{equation}
\begin{array}{l}
     \sigma_i=s_i(\x^2)\|\x\|_2^{2(D-4nu_i-d_{g_i})}\,,\,i\in[m]\,,
\end{array}
\end{equation}
are SOS of monomials.
Set $K=D-d_f+K$. 
Then $\|\x\|_2^{2K}(f+2\varepsilon\|\x\|_2^{2d_f})=\sigma_0+ \sum_{i=1}^m g_{i} \sigma_i$ with $\deg(\sigma_0)=\deg(g_i\sigma_i)=2(K+d_f)$, for $i\in[m]$.
This completes the proof of Theorem \ref{theo:complex.putinar.vasilescu.even}.
\end{proof}


\subsection{Variations of P\'olya's Positivstellensatz}
\label{sec:varia}
For every $t\in\N$, denote
\begin{equation}
    \begin{array}{rl}
         \bar \v_t(\x):=\v_{t}(\frac{1}{2}(\x+\e)\frac{1}{2}(\x-\e))=(\frac{1}{2^{|\a+\b|}}(\x+\e)^\a(\x-\e)^\b)_{(\a,\b)\in \N^{2n}_{t}}\,,
    \end{array}
\end{equation}
where $\e:=(1,\dots,1)\in\R^n$.

As a consequence of Corollary \ref{coro:compact.even}, the next proposition shows that the weighted SOS polynomials in Putinar--Vasilescu's Positivstellensatz can be associated with diagonal Gram matrices via a change of monomial basis.

\begin{proposition}(Putinar--Vasilescu's Positivstellensatz with diagonal Gram matrices)
\label{prop:rep.without.evan.PuVa}
Let $g_1,\dots,g_m$ be polynomials such that $g_1:=R-\|\x\|_2^2$ for some $R>0$ and $g_m:=1$.
Let $S$ be the semialgebraic set defined by
\begin{equation}
    S:=\{\x\in\R^n\,:\,g_1(\x)\ge 0\dots,g_m(\x)\ge 0\}\,.
\end{equation} 
Let $f$ be a polynomial of degree at most $2d_f$  such that $f$ is nonnegative on $S$.
Denote $d_{g_i}:=\lceil \deg(g_i)/2\rceil$.
Then the following statements hold:
\begin{enumerate}
    \item For all $\varepsilon>0$, there exists $K_\varepsilon\in\N$ such that for all $k\ge K_\varepsilon$, there exist vectors $\boldsymbol{\eta}^{(i)}\in\R_+^{b(2n,k+d_f-d_{g_i})}$ satisfying
\begin{equation}
\label{eq:rep.han.PuVa}
    \begin{array}{rl}
          (\|\x\|_2^2+n+2)^k(f+\varepsilon)=\sum_{i=1}^m    g_i\bar \v_{k+d_f-d_{g_i}}^\top  \diag(\boldsymbol{\eta}^{(i)}) \bar \v_{k+d_f-d_{g_i}}\,.
    \end{array}
\end{equation}
\item If $S$ has nonempty interior, then
there exist positive constants $\bar{\mathfrak{c}}$ and $\mathfrak{c}$ depending on $f,g_i$ such that for all $\varepsilon>0$,  one can take 
$K_{\varepsilon} = \bar{\mathfrak{c}}\varepsilon^{-\mathfrak{c}}$.
\end{enumerate} 
\end{proposition}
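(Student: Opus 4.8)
The plan is to deduce the representation from Corollary \ref{coro:compact.even} by lifting the coordinates to $2n$ variables where everything becomes even in each variable, applying the even Positivstellensatz there, and then specializing back. Throughout, $\bar\v_t$ is the image of the standard monomial vector in $2n$ variables $(u,w)=(u_1,\dots,u_n,w_1,\dots,w_n)$ under the substitution $u_j\mapsto\tfrac12(x_j+1)$, $w_j\mapsto\tfrac12(x_j-1)$, so a diagonal Gram form $\bar\v_t^\top\diag(\boldsymbol\eta)\bar\v_t$ with $\boldsymbol\eta\ge0$ is exactly the image of an SOS of monomials $\sum_{(\a,\b)}\eta_{(\a,\b)}u^{2\a}w^{2\b}$ in the $2n$ variables. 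Two identities drive the argument: under this substitution one has $u_j^2-w_j^2=x_j$ and $\|(u,w)\|_2^2=\tfrac12(\|\x\|_2^2+n)$, hence $1+\|(u,w)\|_2^2=\tfrac12(\|\x\|_2^2+n+2)$, which is where the shift $n+2$ in the multiplier comes from.

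First I would set up the auxiliary even problem. Define $F(u,w):=f(u_1^2-w_1^2,\dots,u_n^2-w_n^2)$ and $G_i(u,w):=g_i(u_1^2-w_1^2,\dots,u_n^2-w_n^2)$; these are even in each of the $2n$ variables, $G_m=1$, and if $G_i(u,w)\ge0$ for all $i$ then $\x:=(u_1^2-w_1^2,\dots,u_n^2-w_n^2)\in S$, whence $F(u,w)=f(\x)\ge0$. Thus $F$ is nonnegative on $\{(u,w):G_i\ge0\}$, but this set is unbounded, so Corollary \ref{coro:compact.even} does not apply directly. I would therefore adjoin the artificial ball constraint $G_0:=\tfrac{R+n}{2}-\|(u,w)\|_2^2$; restricting to $\{G_0\ge0\}$ only shrinks the feasible set, so $F$ stays nonnegative, and the radius $\tfrac{R+n}{2}$ is chosen precisely so that back-substitution sends $G_0$ to $\tfrac12(R-\|\x\|_2^2)=\tfrac12 g_1$. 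For statement 2 I would also verify that the augmented set has nonempty interior when $S$ does: taking $\x^\circ\in\inter S$ and lifting it via $u_j^{\circ2}-w_j^{\circ2}=x_j^\circ$ with $w_j^\circ=0$ or $u_j^\circ=0$ gives $\|(u^\circ,w^\circ)\|_2^2=\sum_{j\in[n]}|x_j^\circ|\le\sqrt{n}\,\|\x^\circ\|_2<\sqrt{nR}\le\tfrac{R+n}{2}$ by Cauchy--Schwarz and AM--GM, so the lift lands in the interior (and one perturbs to make all coordinates nonzero).

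Next I would apply Corollary \ref{coro:compact.even} to $F$ on $\{G_0\ge0,G_1\ge0,\dots,G_m\ge0\}$, with $G_0$ in the role of the ball constraint and $G_m=1$ in the role of $g_m$, at order $k$. This produces SOS of monomials $\sigma_0,\sigma_{G_0},\sigma_{G_1},\dots$ in $(u,w)$ with $(1+\|(u,w)\|_2^2)^k(F+\varepsilon)=\sigma_0+\sigma_{G_0}G_0+\sum_{i\in[m]}\sigma_{G_i}G_i$. Substituting $u_j=\tfrac12(x_j+1)$, $w_j=\tfrac12(x_j-1)$ and using the identities above, the left side becomes $2^{-k}(\|\x\|_2^2+n+2)^k(f+\varepsilon)$, $G_i$ becomes $g_i$, $G_0$ becomes $\tfrac12 g_1$, and each $\sigma$ becomes a diagonal Gram form in the $\bar\v$ basis. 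Absorbing the $G_0$-term into the $g_1$ coefficient and $\sigma_0$ into the $g_m=1$ coefficient (nonnegativity of diagonal entries survives addition), then clearing $2^{-k}$, yields \eqref{eq:rep.han.PuVa}; statement 2 is inherited from statement 2 of Corollary \ref{coro:compact.even}, since every constant above depends on $f,g_i$ and polynomially on $\varepsilon^{-1}$.

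The main obstacle is the loss of compactness under the even lift $x_j\mapsto u_j^2-w_j^2$: the genuinely non-routine content is recognizing that a single added ball constraint both restores applicability of Corollary \ref{coro:compact.even} and vanishes cleanly on specialization by merging into $g_1$, which is what forces the radius $\tfrac{R+n}{2}$ and the multiplier $(\|\x\|_2^2+n+2)^k$. A secondary delicate point, which I expect to require the most careful work, is the degree accounting: the even lift doubles degrees in $(u,w)$, so obtaining the stated index $k+d_f-d_{g_i}$ for $\bar\v$ means tracking how the half-degrees of the $\sigma$'s transfer back to $\x$-degrees through the substitution (exploiting the relation $u_j-w_j=1$ to reduce the overcomplete $\bar\v$ system), and this is exactly where the hypotheses $\deg f\le 2d_f$ and $d_{g_i}=\lceil\deg(g_i)/2\rceil$ enter.
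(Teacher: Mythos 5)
Your proposal is correct and follows essentially the same route as the paper's proof: lift to $2n$ variables via $x_j=u_j^2-w_j^2$, adjoin the ball constraint of radius $\tfrac{R+n}{2}$ (which specializes to $\tfrac12 g_1$ under $u_j=\tfrac12(x_j+1)$, $w_j=\tfrac12(x_j-1)$), apply Corollary \ref{coro:compact.even} in the lifted even setting, and substitute back. The only cosmetic difference is your choice of interior point for the lifted set; the paper simply takes $(\tfrac12(\mathbf{a}+\e),\tfrac12(\mathbf{a}-\e))$ for $\mathbf{a}$ in the interior of $S$, which lands strictly inside the added ball without needing the Cauchy--Schwarz and AM--GM estimates.
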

\begin{proof}
Take two vectors of $n$ variables $\y=(y_1,\dots,y_n)$ and $\z=(z_1,\dots,z_n)$. 
Given $q\in\R[\x]$, denote the polynomial $\hat q(\y,\z)=q(\y^2-\z^2)\in\R[\y,\z]$.
Let $\hat g_{m+1}:=\frac{1}{2}(L+n)-\|(\y,\z)\|_2^2$ and $d_{g_{m+1}}:=1$.
Define 
\begin{equation}
    \begin{array}{rl}
         \hat S:=\{(\y,\z)\in\R^{2n}\,:\,\hat g_i(\y,\z)\ge 0\,,\,i\in[m+1]\}
    \end{array}
\end{equation}
Note that $\hat g_1:=R-\|\y^2-\z^2\|_2^2$ and $\hat g_m:=1$.
Since $f\ge 0$ on $S$, replacing $\x$ by $\y^2-\z^2$ gives $\hat f\ge 0 \text{ on } \hat S$.
From this  and Corollary \ref{coro:compact.even}, there exist $\boldsymbol{\eta}^{(i)}\in\R_+^{b(2n,k+d_f-d_{g_i})}$ such that
\begin{equation}
    \begin{array}{rl}
         (\|(\y,\z)\|_2^2+1)^k(\hat f+\varepsilon)=\sum_{i=1}^{m+1} \hat g_i\v_{k+d_f-d_{g_i}}(\y,\z)^\top  \diag(\boldsymbol{\eta}^{(i)}) \v_{k+d_f-d_{g_i}}(\y,\z)\,.
    \end{array}
\end{equation}
With $\y=\frac{1}{2}(\x+\e)$ and $\z=\frac{1}{2}(\x-\e)$, it becomes
\begin{equation}\label{eq:near.rep}
    \begin{array}{rl}
          \frac{1}{2^k}(\|\x\|_2^2+n+2)^k(f+\varepsilon)=\sum_{i=1}^{m+1}  g_i  \bar \v_{k+d_f-d_{g_i}}^\top  \diag(\boldsymbol{\eta}^{(i)})\bar \v_{k+d_f-d_{g_i}} \,.
    \end{array}
\end{equation}
Here $g_{m+1}(\cdot):=\hat g_{m+1}(\frac{1}{2}(\cdot+\e),\frac{1}{2}(\cdot-\e))=\frac{1}{2}g_1(\cdot)$.
Indeed, since $\y^2-\z^2=\x$, $\hat f(\y,\z)=f(\x)$ and $\hat g_i(\y,\z)=g_i(\x)$, for $i\in[m]$.
Since $\y^2+\z^2=\frac{1}{2}(\x^2+\e)$, $\|\y\|_2^2+\|\z\|_2^2=\frac{1}{2}(\|\x\|_2^2+n)$.
This implies that
\begin{equation}
    \begin{array}{rl}
         \hat g_{m+1}(\y,\z)=\frac{1}{2}(L+n)-\|(\y,\z)\|_2^2=\frac{1}{2}(R-\|\x\|_2^2)=\frac{1}{2}g_1(\x)\,.
    \end{array}
\end{equation}
Moreover, if $\mathbf{a}$ belongs to the interior of $S$, then $(\frac{1}{2}(\mathbf{a}+\e),\frac{1}{2}(\mathbf{a}-\e))$ belongs to the interior of $\hat S$.
Thus, the desired result follows.
\end{proof}
\begin{remark}
In view of Propositions \ref{prop:rep.without.evan.PuVa}, replacing the standard monomial basis $\v_t$ by the new basis $\bar \v_t$ can provide a  Positivstellensatz involving  Gram matrix of factor width $1$.
Thus, ones can build up a hierarchy of semidefinite relaxations with any maximal matrix size, based on representation \eqref{eq:rep.han.PuVa}.
However, expressing the entries of the basis $\bar \v_t$ is a time-consuming task within the  modeling process.
A potential workaround is to impose \eqref{eq:rep.han.PuVa} on a set of generic
points similarly to \cite[Section 2.3]{lasserre2017bounded}. 
This needs further study.
\end{remark}

\subsection{Polynomial optimization on the nonnegative orthant: Noncompact case}
\label{sec:pop.noncompact}
\subsubsection{Linear relaxations}
Given $\varepsilon>0$, consider the hierarchy of linear programs indexed by $k\in\N$: 
\begin{equation}\label{eq:dual-sdp.even.LP}
\begin{array}{rl}
\tau_k^{\textup{\revise{P\'olya}}}{(\varepsilon)}: = \inf\limits_\y &{L_\y}( {{\theta ^k}( {\check f +  \varepsilon {\theta ^{d_f}}} )} )\\
\st&\y= {(y_\a )_{\a  \in \N^n_{2( {d_f + k})}}} \subset \R\,,\,{L_\y}( {{\theta ^k}}) = 1\,,\\
&\diag({\M_{k_i}}( {{\check g_i}\y}))\in \R_+^{b(n,k_i)}\,,\,i \in[m]\,,
\end{array}
\end{equation}
where $k_i:=k + d_f - d_{g_i}$, $i\in[m]$.
Here $\check g_m=1$.
Note that 
\begin{equation}
\begin{array}{rl}
     \diag({\M_{k_i}}( {{\check g_i}\y}))=(\sum_{\g\in\N^n_{2d_{g_i}}}y_{2\a+\g}\check g_{i,\g})_{\a\in\N^n_{k_i}}\,.
\end{array}
\end{equation}

\begin{theorem}\label{theo:constr.theo.even.LP}
Let $f,g_i\in\R[\x]$, $i\in[m]$, with $g_m=1$.
Consider POP \eqref{eq:constrained.problem.poly.even.LP} with $S$ being defined as in \eqref{eq:semial.set.def.2.even.LP}.
Let $\varepsilon>0$ be fixed. 
For every $k\in\N$, the dual of \eqref{eq:dual-sdp.even.LP} reads as:
\begin{equation}\label{eq:primal.problem.even.LP}
\begin{array}{rl}
   {\rho _k^{\textup{\revise{P\'olya}}}}(\varepsilon):= \sup\limits_{\lambda,\mathbf{u}_i} & \lambda\\
   \st & \lambda\in\R\,,\,\mathbf{u}_i\in\R_+^{b(n,k_i)}\,,\,i\in[m]\,,\\
   &\theta^k(\check f-\lambda+\varepsilon\theta^{d_f})=\sum_{i=1}^m \check g_i\v_{k_i}^\top \diag(\mathbf{u}_i) \v_{k_i}\,.
\end{array}
\end{equation}
Here $\check g_m=1$.
The following statements hold:
\begin{enumerate}
\item For all $k\in\N$,
$\rho_k^{\textup{\revise{P\'olya}}}{(\varepsilon)}\le\rho_{k+1}^{\textup{\revise{P\'olya}}}{(\varepsilon)}\le f^\star$.
\item 
There exists $K\in\N$ such that for all $k\ge K$, 
$0\le f^\star - \rho_k^{\textup{\revise{P\'olya}}}{(\varepsilon)} \le \varepsilon \theta {( {{\x^{\star2}}})^{d_f}}$.
\item If $S$ has nonempty interior, there exist positive constants $\bar{\mathfrak{c}}$ and $\mathfrak{c}$ depending on $f,g_i$ such that for all $k\ge \bar{\mathfrak{c}}\varepsilon^{-\mathfrak{c}}$, 
$0\le f^\star-\rho_k^{\textup{\revise{P\'olya}}}{(\varepsilon)} \le \varepsilon \theta {( {{\x^{\star2}}})^{d_f}}$.
\end{enumerate}
\end{theorem}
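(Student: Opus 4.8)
The plan is to prove the third (quantitative) statement as the explicit-threshold version of the second one, so I first set up the qualitative sandwich and then feed in the rate from Corollary~\ref{coro:dehomo.even}. Following the template of Theorem~\ref{theo:constr.theo.0.even.LP}, I pass to the equivalent problem \eqref{eq:equi.POP}, in which $\check f,\check g_1,\dots,\check g_m$ are all even in each variable and $\check f-f^\star$ is nonnegative on $\check S$. The two halves of the sandwich then come from the two sides of weak LP duality between the primal \eqref{eq:dual-sdp.even.LP} and the dual \eqref{eq:primal.problem.even.LP}, and statement~3 adds nothing to the mechanism of statement~2 except that the threshold $K_\varepsilon$ is supplied by part~2 of Corollary~\ref{coro:dehomo.even}.

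For one side I would exhibit an explicit dual-feasible $\lambda$. Applying Corollary~\ref{coro:dehomo.even} to the objective $\check f-f^\star$ and the perturbation level $\varepsilon$ yields, for every $k\ge K_\varepsilon$, SOS of monomials $\sigma_0,\dots,\sigma_m$ with $\deg\sigma_0\le 2(k+d_f)$ and $\deg(\sigma_i\check g_i)\le 2(k+d_f)$ such that $\theta^k(\check f-f^\star+\varepsilon\theta^{d_f})=\sigma_0+\sum_{i\in[m]}\sigma_i\check g_i$. Since $\check g_m=1$ I absorb $\sigma_0$ into the $i=m$ term, and since each $\sigma_i$ is an SOS of monomials of degree at most $2k_i$ with $k_i=k+d_f-d_{g_i}$, I write $\sigma_i=\v_{k_i}^\top\diag(\mathbf u_i)\v_{k_i}$ for some $\mathbf u_i\in\R_+^{b(n,k_i)}$. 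This is exactly the constraint of \eqref{eq:primal.problem.even.LP} with $\lambda=f^\star$, so $\lambda=f^\star$ is dual-feasible for all $k\ge K_\varepsilon$. For the opposite side I would feed the primal \eqref{eq:dual-sdp.even.LP} the scaled Dirac pseudo-moment sequence $y_\a:=\theta(\x^{\star2})^{-k}(\x^{\star2})^\a$ supported at the optimizer $\x^{\star2}$ of \eqref{eq:equi.POP}: the normalization gives $L_\y(\theta^k)=1$, the diagonal localizing entries equal $\theta(\x^{\star2})^{-k}(\x^{\star2})^{2\a}\check g_i(\x^{\star2})\ge 0$ because $\x^{\star2}\in\check S$, and the objective evaluates to $f^\star+\varepsilon\theta(\x^{\star2})^{d_f}$. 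Weak duality $\rho_k^{\textup{P\'ol}}(\varepsilon)\le\tau_k^{\textup{P\'ol}}(\varepsilon)$ together with the feasibility of $f^\star$ then traps $\rho_k^{\textup{P\'ol}}(\varepsilon)$ within $\varepsilon\theta(\x^{\star2})^{d_f}$ of $f^\star$; monotonicity (statement~1) follows by multiplying the dual identity by $\theta$, which keeps the right-hand side a combination of $\check g_i$ with SOS of monomials and raises each $k_i$ by one.

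To obtain statement~3 I would then invoke the nonempty-interior case, part~2 of Corollary~\ref{coro:dehomo.even}, which furnishes constants $\bar{\mathfrak c},\mathfrak c>0$ depending only on $f,g_i$ with $K_\varepsilon=\bar{\mathfrak c}\varepsilon^{-\mathfrak c}$; the sandwich above then holds for every $k\ge\bar{\mathfrak c}\varepsilon^{-\mathfrak c}$, giving the stated estimate. I expect the main obstacle to be the bookkeeping that makes the dual-feasibility step airtight: one must check that the degree bounds \eqref{eq:bound.degree.dehomo} let every $\sigma_i$ sit inside a single diagonal Gram block indexed by $\N^n_{k_i}$, and, more delicately, that the explicit exponent $\mathfrak c$ genuinely propagates from the homogeneous Theorem~\ref{theo:complex.putinar.vasilescu.even} through the dehomogenization of Corollary~\ref{coro:dehomo.even} and the change of variables $\x\mapsto\x^2$ without accruing any hidden $\varepsilon$-dependence. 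The duality and Dirac computations are routine once this threshold transfer is secured.
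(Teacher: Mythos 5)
Your strategy is the paper's: the authors prove this result by exactly the two-sided argument you describe (dual feasibility of $\lambda=f^\star$ via Corollary \ref{coro:dehomo.even} applied to $\check f-f^\star$, a rescaled Dirac pseudo-moment vector at the minimizer of \eqref{eq:equi.POP} for the primal, weak duality, and multiplication by $\theta$ for monotonicity), deferring the details to the proof of Theorem 7 of \cite{mai2021positivity}. The individual steps you outline are sound, including the degree bookkeeping you worry about: an SOS of monomials of degree at most $2k_i$ is precisely $\v_{k_i}^\top\diag(\mathbf{u}_i)\v_{k_i}$ with $\mathbf{u}_i\in\R_+^{b(n,k_i)}$, and the threshold transfer through dehomogenization is exactly what part 2 of Corollary \ref{coro:dehomo.even} supplies.

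One point you should not gloss over: your sandwich comes out as $f^\star\le\rho_k^{\textup{P\'ol}}(\varepsilon)\le f^\star+\varepsilon\,\theta(\x^{\star2})^{d_f}$, not as the literal $0\le f^\star-\rho_k^{\textup{P\'ol}}(\varepsilon)\le\varepsilon\,\theta(\x^{\star2})^{d_f}$ of statement 2. Dual feasibility of $\lambda=f^\star$ gives $\rho_k^{\textup{P\'ol}}(\varepsilon)\ge f^\star$ for $k\ge K_\varepsilon$, while evaluating the identity in \eqref{eq:primal.problem.even.LP} at the minimizer of \eqref{eq:equi.POP} only yields $\lambda\le f^\star+\varepsilon\,\theta(\x^{\star2})^{d_f}$: the perturbation term $\varepsilon\theta^{d_f}$ is precisely what lets $\lambda$ overshoot $f^\star$, so the bound ``$\le f^\star$'' in statement 1 (carried over from the unperturbed compact case, Theorem \ref{theo:constr.theo.0.even.LP}) cannot be obtained by this route and is not what the argument produces. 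Saying that the argument ``traps $\rho_k^{\textup{P\'ol}}(\varepsilon)$ within $\varepsilon\theta(\x^{\star2})^{d_f}$ of $f^\star$'' papers over this; you should state explicitly that what your proof delivers is $0\le\rho_k^{\textup{P\'ol}}(\varepsilon)-f^\star\le\varepsilon\,\theta(\x^{\star2})^{d_f}$ for $k\ge K_\varepsilon$ (equivalently $|f^\star-\rho_k^{\textup{P\'ol}}(\varepsilon)|\le\varepsilon\,\theta(\x^{\star2})^{d_f}$), and note that the inequalities in the statement must be read with the roles of $f^\star$ and $\rho_k^{\textup{P\'ol}}(\varepsilon)$ interchanged for the argument to close.
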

The proof of Theorem \ref{theo:constr.theo.even.LP} relies on Corollary \ref{coro:dehomo.even} and is exactly the same as the proof of \cite[Theorem 7]{mai2021positivity}.

\subsubsection{Semidefinite relaxations}
\label{sec:dense.POP.noncompact}

Given $\varepsilon>0$, consider the hierarchy of semidefinite programs indexed by $s\in\N_{> 0}$ and $k\in\N$: 
\begin{equation}\label{eq:dual-sdp.0.even.noncompact}
\begin{array}{rl}
{\tau_{k,s}^\textup{\revise{P\'olya}}(\varepsilon)}: = \inf\limits_\y &{L_\y}( {\theta ^k}(\check f +\varepsilon\theta^{d_f}) )\\
\st&\y= {(y_\a )_{\a  \in \N^n_{2( {d_f + k})}}} \subset \R\,,\,{L_\y}( {{\theta ^k}}) = 1\,,\\
&\M_{\cA^{(s,k_i)}_j}(\check g_i\y)\succeq 0\,,\,j\in[b(n,k_i)]\,,\,i \in[m]\,,
\end{array}
\end{equation}
where $k_i:=k + d_f - d_{g_i}$, $i\in[m]$. Here $\check g_m=1$.
\begin{theorem}\label{theo:constr.theo.0.even.noncompact}
Let $f,g_i\in\R[\x]$, $i\in[m]$, with $g_m=1$.
Consider POP \eqref{eq:constrained.problem.poly.even.LP} with $S$ being defined as in \eqref{eq:semial.set.def.2.even.LP}.
Let $\varepsilon>0$ be fixed. 
For every $k\in\N$ and for every $s\in\N_{> 0}$, the dual of \eqref{eq:dual-sdp.0.even.noncompact} reads as:
\begin{equation}\label{eq:primal.problem.0.even.noncompact}
\begin{array}{rl}
   {\rho _{k,s}^\textup{\revise{P\'olya}}}(\varepsilon):= \sup\limits_{\lambda,\mathbf{G}_{ij}} & \lambda\\
   \st& \lambda\in\R\,,\,\mathbf{G}_{ij}\succeq 0\,,\,j\in[b(n,k_i)]\,,\,i\in[m]\,,\\[5pt]
   &\theta^k(\check f-\lambda+\varepsilon\theta^{d_f})=\sum_{i\in[m]} \check g_i \big(\sum_{j\in[b(n,k_i)]}\v_{\cA^{(s,k_i)}_j}^\top \mathbf{G}_{ij} \v_{\cA^{(s,k_i)}_j}\big)\,.
\end{array}
\end{equation}
The following statements hold:
\begin{enumerate}
\item For all $k\in\N_{> 0}$ and for every $s\in\N_{> 0}$,
$\rho_{k}^\textup{\revise{P\'olya}}=\rho_{k,1}^\textup{\revise{P\'olya}}(\varepsilon)\le\rho_{k,s}^\textup{\revise{P\'olya}}(\varepsilon)$.
\item For every $s\in\N_{> 0}$, there exists $K\in\N$ such that for every $k\in\N$  satisfying $k\ge K$,
$0\le f^\star -\rho_{k,s}^\textup{\revise{P\'olya}}(\varepsilon)\le \varepsilon \theta {( {{\x^{\star2}}})^{d_f}}$.
\item If $S$ has nonempty interior, there exist positive constants $\bar{\mathfrak{c}}$ and $\mathfrak{c}$ depending on $f,g_i$ such that for every $s\in\N_{> 0}$ and for every $k\in\N$ satisfying $k\ge \bar{\mathfrak{c}}\varepsilon^{-\mathfrak{c}}$,
$0\le f^\star-\rho_{k,s}^\textup{\revise{P\'olya}}(\varepsilon) \le \varepsilon \theta {( {{\x^{\star2}}})^{d_f}}$.
\item If $S$ has nonempty interior, for every $s\in\N_{> 0}$ and for every $k\in\N$ strong duality holds for the  primal-dual problems \eqref{eq:dual-sdp.0.even.noncompact}-\eqref{eq:primal.problem.0.even.noncompact}.
\end{enumerate}
\end{theorem}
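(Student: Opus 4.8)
The plan is to follow the template of Theorem \ref{theo:constr.theo.0.even}: derive the stated primal--dual pair by conic duality, read off statement~1 from the combinatorics of the index sets $\cA^{(s,k_i)}_j$, deduce statements~2 and~3 from the linear-relaxation Theorem \ref{theo:constr.theo.even.LP} by a sandwiching argument, and establish statement~4 (strong duality) through a Slater point on the moment side.

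I would first justify that \eqref{eq:primal.problem.0.even.noncompact} is the dual of the moment SDP \eqref{eq:dual-sdp.0.even.noncompact}. Forming the Lagrangian with a scalar multiplier $\lambda$ for the normalization $L_\y(\theta^k)=1$ and PSD multipliers $\mathbf G_{ij}\succeq0$ for the blocks $\M_{\cA^{(s,k_i)}_j}(\check g_i\y)\succeq0$, the inner infimum over the free vector $\y$ is finite only if the coefficient of every $y_\a$ cancels. Since $\check f$, $\theta$ and the $\check g_i$ carry only monomials $\x^{2\a}$, these cancellation conditions collect into the single polynomial identity $\theta^k(\check f-\lambda+\varepsilon\theta^{d_f})=\sum_{i\in[m]}\check g_i\big(\sum_{j}\v_{\cA^{(s,k_i)}_j}^\top\mathbf G_{ij}\v_{\cA^{(s,k_i)}_j}\big)$, and the dual objective collapses to $\lambda$, which is exactly \eqref{eq:primal.problem.0.even.noncompact}.

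For statement~1, at $s=1$ each $\cA^{(1,k_i)}_j$ is a singleton, so each $\mathbf G_{ij}$ is a nonnegative scalar and $\sum_j\v_{\cA^{(1,k_i)}_j}^\top\mathbf G_{ij}\v_{\cA^{(1,k_i)}_j}$ is precisely an SOS of monomials $\v_{k_i}^\top\diag(\mathbf u_i)\v_{k_i}$; hence \eqref{eq:primal.problem.0.even.noncompact} reduces to the linear program \eqref{eq:primal.problem.even.LP} and $\rho_{k,1}^\textup{P\'ol}(\varepsilon)=\rho_k^\textup{P\'ol}(\varepsilon)$. Monotonicity in $s$ follows because every factor-width-one certificate is a special factor-width-$s$ certificate: using that $(\cA^{(s,k_i)}_j)_j$ covers $\N^n_{k_i}$, any feasible $(\lambda,\mathbf u_i)$ of the LP lifts to a feasible point of \eqref{eq:primal.problem.0.even.noncompact} with the same $\lambda$ by placing the scalars on the appropriate $1\times1$ diagonal blocks, so $\rho_{k,1}^\textup{P\'ol}(\varepsilon)\le\rho_{k,s}^\textup{P\'ol}(\varepsilon)$. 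Statements~2 and~3 are then immediate: the chain $\rho_k^\textup{P\'ol}(\varepsilon)=\rho_{k,1}^\textup{P\'ol}(\varepsilon)\le\rho_{k,s}^\textup{P\'ol}(\varepsilon)$ combined with the two-sided estimate of Theorem \ref{theo:constr.theo.even.LP}(2)--(3), which supplies a threshold $K$, respectively $K=\bar{\mathfrak c}\varepsilon^{-\mathfrak c}$ when $S$ has nonempty interior, beyond which $0\le f^\star-\rho_k^\textup{P\'ol}(\varepsilon)\le\varepsilon\theta(\x^{\star2})^{d_f}$, transfers the same bound to $\rho_{k,s}^\textup{P\'ol}(\varepsilon)$ uniformly in $s$, the upper bound $\rho_{k,s}^\textup{P\'ol}(\varepsilon)\le f^\star$ being obtained exactly as in the linear case.

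The main obstacle is statement~4, and this is where the nonempty-interior hypothesis does the work. The idea is to produce a strictly feasible (Slater) point for the primal moment SDP \eqref{eq:dual-sdp.0.even.noncompact}, which then forces zero duality gap and dual attainment, exactly as in the third statement of \cite[Theorem 4]{mai2022complexity}. Because $S$ has nonempty interior, so does $\check S$, and one can choose an open ball $U\subset\check S$ on which every $\check g_i$ is strictly positive. Taking the truncated moment sequence $\y$ of a measure $\mu$ with a strictly positive continuous density on $U$ and rescaling it so that $L_\y(\theta^k)=1$, one gets $\M_{\cA^{(s,k_i)}_j}(\check g_i\y)\succ0$ for all $i,j$, since each such block equals the Gram matrix $\int\check g_i\,\v_{\cA^{(s,k_i)}_j}\v_{\cA^{(s,k_i)}_j}^\top\,d\mu$ of monomials that are linearly independent on the full-dimensional set $U$, weighted by the positive measure $\check g_i\,d\mu$. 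The delicate point to verify is that the single affine equality $L_\y(\theta^k)=1$ is compatible with strict positivity of all conic blocks; it is, because rescaling a measure leaves the localizing matrices strictly definite and only fixes the overall mass, so Slater's condition holds and yields $\rho_{k,s}^\textup{P\'ol}(\varepsilon)=\tau_{k,s}^\textup{P\'ol}(\varepsilon)$ with the supremum attained.
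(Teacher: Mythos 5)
Your overall architecture — conic duality for the primal--dual pair, the identification $\rho_{k,1}^\textup{P\'ol}(\varepsilon)=\rho_k^\textup{P\'ol}(\varepsilon)$ via singleton blocks, monotonicity in $s$ from the covering property of the sets $\cA^{(s,k_i)}_j$, and a Slater point built from a measure with positive density on an open subset of $\check S$ for statement~4 — matches the paper's intended route, which the paper itself only sketches by citing Theorem~\ref{theo:constr.theo.even.LP} and \cite[Theorem 3]{mai2022complexity}.

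The genuine gap is the upper bound on $\rho_{k,s}^\textup{P\'ol}(\varepsilon)$, i.e.\ the half $0\le f^\star-\rho_{k,s}^\textup{P\'ol}(\varepsilon)$ of statements~2 and~3. Your chain $\rho_k^\textup{P\'ol}(\varepsilon)=\rho_{k,1}^\textup{P\'ol}(\varepsilon)\le\rho_{k,s}^\textup{P\'ol}(\varepsilon)$ only transfers the \emph{lower} estimate from Theorem~\ref{theo:constr.theo.even.LP}: an upper bound on $\rho_{k,1}^\textup{P\'ol}(\varepsilon)$ says nothing about $\rho_{k,s}^\textup{P\'ol}(\varepsilon)$ for $s>1$. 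The fallback you invoke --- ``obtained exactly as in the linear case'', i.e.\ evaluating the certificate at a feasible point --- fails here precisely because of the $\varepsilon$-perturbation: a feasible $\lambda$ of \eqref{eq:primal.problem.0.even.noncompact} only satisfies $\theta(\x)^k\bigl(\check f(\x)-\lambda+\varepsilon\theta(\x)^{d_f}\bigr)\ge0$ on $\check S$, hence $\lambda\le \check f(\x^{\star2})+\varepsilon\theta(\x^{\star2})^{d_f}=f^\star+\varepsilon\theta(\x^{\star2})^{d_f}$, which is strictly weaker than $\lambda\le f^\star$. (In the compact case of Theorem~\ref{theo:constr.theo.0.even} the perturbation is absent and the evaluation argument does give $\lambda\le f^\star$ for every $s$; that is why the analogy misleads you.) The paper closes this with a second comparison that is missing from your proposal: every factor-width-$s$ certificate is in particular an unrestricted weighted-SOS certificate, so $\rho_{k,s}^\textup{P\'ol}(\varepsilon)\le\rho_k^{(\varepsilon)}$, where $\rho_k^{(\varepsilon)}$ is the full Putinar--Vasilescu SDP value of \cite[(113)]{mai2022complexity}, and it is \cite[Theorem 3]{mai2022complexity} that supplies the control of $\rho_k^{(\varepsilon)}$ by $f^\star$. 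Without the two-sided sandwich $\rho_k^\textup{P\'ol}(\varepsilon)\le\rho_{k,s}^\textup{P\'ol}(\varepsilon)\le\rho_k^{(\varepsilon)}$ your argument yields only $|f^\star-\rho_{k,s}^\textup{P\'ol}(\varepsilon)|\le\varepsilon\,\theta(\x^{\star2})^{d_f}$, not the one-sided bound claimed in the theorem.
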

The proof of Theorem \ref{theo:constr.theo.0.even.noncompact} is based on Theorem \ref{theo:constr.theo.even.LP}, \cite[Theorem 3]{mai2022complexity} and the inequalities
$\rho_k^\textup{\revise{P\'olya}}(\varepsilon)\le \rho_{k,s}^\textup{\revise{P\'olya}}(\varepsilon) \le \rho_k^{(\varepsilon)}$,
where $\rho_k^{(\varepsilon)}$ is defined as in \cite[(113)]{mai2022complexity}.
For each $q\in\R[\x]_d$, denote   the degree-$d$ homogenization of $q$ by $x_{n+1}^dq(\frac{\x}{x_{n+1}})\in\R[\x,x_{n+1}]$.
\begin{remark}\label{rem:SDSOS}
Let $(\lambda,\mathbf{G}_{ij})$ be a feasible solution of \eqref{eq:primal.problem.0.even.noncompact} and consider the case of $m=1$. Then the equality constraint of \eqref{eq:primal.problem.0.even.noncompact} becomes
\begin{equation}
    \begin{array}{rl}
         \theta^k(\check f-\lambda+\varepsilon\theta^{d_f})= \sum_{j\in[b(n,k_m)]}\v_{\cA^{(s,k_m)}_j}^\top \mathbf{G}_{mj} \v_{\cA^{(s,k_m)}_j}\,.
    \end{array}
\end{equation}
It implies that the degree-$2d_f$ homogenization of $\check f-\lambda+\varepsilon\theta^{d_f}$ belongs to the cone $k$-$\text{DSOS}_{n+1,2d_f}$ (resp. $k$-$\text{SDSOS}_{n+1,2d_f}$) when $s=1$ (resp. $s=2$)  according to \cite[Definition  3.10]{ahmadi2019dsos}.
More generally, the polynomial $\theta^k(\check f-\lambda+\varepsilon\theta^{d_f})$ belongs to the cone of SOS polynomials whose Gram matrix has  factor width at most $s$ (see \cite[Section 5.3]{ahmadi2019dsos}).
\end{remark}

\subsection{Sparse representations: Extension of P\'olya's Positivstellensatz}
\label{sec:sparse.rep}

For every $I=\{i_1,\dots,i_r\}\subset[n]$ with $i_1<\dots<i_r$, denote $\x(I)=(x_{i_1},\dots,x_{i_r})$. 

We will make the following assumptions:
\begin{assumption}\label{ass:RIP}
With $p\in\N_{> 0}$, the following conditions hold:
\begin{enumerate}
    \item There exists $(I_{c})_{c\in[p]}$ being a  sequence of subsets of $[n]$  such that
$\cup_{c\in[p]}I_c=[n]$ and 
\begin{equation}\label{eq:RIP}
    \begin{array}{l}
          \forall c\in\{2,\dots,p\}\,,\,\exists r_c\in[c-1]\,:\,I_c\cap (\cup_{t=1}^{c-1}I_t)\subset I_{r_c}\,.
    \end{array}
\end{equation}
Denote $n_c:=|I_c|$, for $c\in[p]$.
\item With $m\in\N_{> 0}$ and $(g_i)_{i\in[m]}\subset \R[\x]$, there exists $(J_c)_{c\in[p]}$ being a finite sequence of subsets of $[m]$ such that $\cup_{c\in[p]}J_c=[m]$ and
\begin{equation}
    \begin{array}{l}
         \forall c\in[p]\,,\, (g_i)_{i\in J_c}\subset \R[\x(I_c)]\,.
    \end{array}
\end{equation}
\item For every $c\in[p]$, there exists $i_c\in J_c$ and $R_c>0$ such that 
\begin{equation}
    g_{i_c}:=R_c-\|\x(I_c)\|_2^2\,.
\end{equation}
\end{enumerate}
\end{assumption}
The condition \eqref{eq:RIP} is called the running intersection property (RIP).

Let $\theta_c:=1+\|\x(I_c)\|_2^2$, $c\in[p]$. 

We state the sparse representation in the following theorem:
\begin{theorem}\label{theo:complex.putinar.vasilescu.even.sparse}
Let $g_1,\dots,g_m$ be  polynomials  such that $g_1,\dots,g_m$ are even in each variable and Assumption \ref{ass:RIP} holds.
Let $S$ be the semialgebraic set defined by
\begin{equation}
    S:=\{\x\in\R^n\,:\,g_1(\x)\ge 0\dots,g_m(\x)\ge 0\}\,.
\end{equation}
Let $f=f_1+\dots+f_p$ be a polynomial such that $f$ is positive on $S$ and for every $c\in[p]$, $f_c\in\R[\x(I_c)]$ is  even in each variable.
Then there exist $d,k\in\N$, $h_c\in\R[\x(I_c)]$, $\sigma_{0,c},\sigma_{j,c}\in\R[\x(I_c)]$, for $j\in J_c$ and $c\in[p]$, such that the following conditions hold:
\begin{enumerate}
    \item The equality $f=h_1+\dots+h_p$ holds and $h_c$ is a polynomial of degree at most $2d$ which is even in each variable.
    \item For all $i\in J_c$ and  $c\in[p]$, $\sigma_{0,c},\sigma_{i,c}$ are SOS of monomials satisfying
    \begin{equation}\label{eq:degree.SOS.even.sparse}
    \deg(\sigma_{0,c})\le 2(k+d)\quad\text{and}\quad\deg(\sigma_{i,c}g_i)\le 2(k+d)
\end{equation}
 and 
\begin{equation}\label{eq:represent.even.sparse}
\begin{array}{l}
     \theta_c^{k}h_c=\sigma_{0,c}+\sum_{i\in J_c}\sigma_{i,c}g_i\,.
\end{array}
\end{equation}
\end{enumerate}
\end{theorem}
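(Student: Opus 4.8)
The plan is to combine a sparse decomposition of $f$ respecting the running intersection property with the dense extension of P\'olya's Positivstellensatz (Corollary \ref{coro:compact.even}) applied clique by clique. For each $c\in[p]$ write $S_c:=\{\x(I_c)\in\R^{n_c}:g_i(\x(I_c))\ge 0,\ i\in J_c\}$; by Assumption \ref{ass:RIP}(3) the ball constraint $g_{i_c}=R_c-\|\x(I_c)\|_2^2$ makes each $S_c$ compact. First I would invoke the standard decomposition lemma underlying sparse positivity certificates: since $f=\sum_{c\in[p]}f_c$ with $f_c\in\R[\x(I_c)]$, since $f$ is positive on the compact set $S$, and since the $(I_c)_{c\in[p]}$ satisfy the running intersection property \eqref{eq:RIP}, there exist $\tilde h_c\in\R[\x(I_c)]$ with $f=\sum_{c\in[p]}\tilde h_c$ and $\tilde h_c>0$ on $S_c$ for every $c$.

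Next I would restore the even-in-each-variable structure by symmetrization. Let $E$ denote the averaging operator $E(q):=2^{-n}\sum_{\boldsymbol{\epsilon}\in\{-1,1\}^n}q(\epsilon_1x_1,\dots,\epsilon_nx_n)$, which projects $\R[\x]$ onto the subspace of polynomials even in each variable and fixes $f$ (as $f=\sum_c f_c$ is even in each variable). Setting $h_c:=E(\tilde h_c)$, only the sign flips of the variables in $I_c$ act nontrivially, so $h_c\in\R[\x(I_c)]$ is even in each variable and $\sum_{c}h_c=E(\sum_c\tilde h_c)=E(f)=f$. Crucially, each $S_c$ is invariant under coordinate sign changes because every $g_i$ is even in each variable; hence for $\x(I_c)\in S_c$ every reflected point again lies in $S_c$, so that $h_c$ is a convex combination of positive values of $\tilde h_c$ and therefore $h_c>0$ on $S_c$. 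This establishes the first conclusion of the theorem, the integer $d$ being fixed at the final step.

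Then I would apply Corollary \ref{coro:compact.even} to each clique separately, using $g_{i_c}$ in the role of the ball constraint and $\theta_c=1+\|\x(I_c)\|_2^2$ in the role of $\theta$. For fixed $c$, put $\delta_c:=\min_{S_c}h_c>0$ and apply Corollary \ref{coro:compact.even} to the polynomial $h_c-\tfrac{\delta_c}{2}$, which is even in each variable and nonnegative on $S_c$, with perturbation $\varepsilon=\tfrac{\delta_c}{2}$. Since $(h_c-\tfrac{\delta_c}{2})+\tfrac{\delta_c}{2}=h_c$, this yields, for all $k$ beyond some $K_c\in\N$, a representation $\theta_c^{k}h_c=\sigma_{0,c}+\sum_{i\in J_c}\sigma_{i,c}g_i$ with $\sigma_{0,c},\sigma_{i,c}$ SOS of monomials obeying $\deg(\sigma_{0,c})\le 2(k+d_c)$ and $\deg(\sigma_{i,c}g_i)\le 2(k+d_c)$, where $d_c:=\lfloor\deg(h_c)/2\rfloor+1$ (the constant constraint $g_m=1$ on the clique being absorbed into $\sigma_{0,c}$).

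Finally I would unify the indices. Taking $k:=\max_c K_c$ and $d:=\max_c d_c$, we have $\deg(h_c)\le 2d_c\le 2d$. For any clique with $K_c<k$ I multiply its representation by $\theta_c^{\,k-K_c}$; since $\theta_c$ is itself an SOS of monomials and products of SOS of monomials remain SOS of monomials, the resulting $\sigma_{0,c},\sigma_{i,c}$ stay SOS of monomials, the exponent of $\theta_c$ becomes the common $k$, and the degree bounds inflate exactly to $2(k+d)$, giving \eqref{eq:degree.SOS.even.sparse} and \eqref{eq:represent.even.sparse}. The main obstacle is the decomposition lemma in the first step: producing $f=\sum_c\tilde h_c$ with each $\tilde h_c$ strictly positive on $S_c$ is the genuinely nontrivial ingredient, since this is exactly where the running intersection property \eqref{eq:RIP} and the compactness of the cliques are used; the symmetrization, the clique-wise application of Corollary \ref{coro:compact.even}, and the degree bookkeeping are then routine.
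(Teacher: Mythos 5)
Your proof is correct and follows essentially the same route as the paper: the sparse decomposition lemma of Grimm--Netzer--Schweighofer (\cite[Lemma 4]{grimm2007note}) to split $f$ into clique-wise positive summands, followed by a clique-by-clique application of Corollary \ref{coro:compact.even} with the clique ball constraints $g_{i_c}=R_c-\|\x(I_c)\|_2^2$. The only substantive difference is how the evenness of the summands $h_c$ is secured: the paper first passes to the polynomials $\tilde f,\tilde g_i$ on the nonnegative orthant, applies the decomposition lemma there, and substitutes $\x\mapsto\x^2$ so that the resulting $h_c$ are automatically even in each variable, whereas you apply the lemma directly to $f$ on $S$ and then symmetrize via the sign-flip averaging operator $E$, using that each $S_c$ is invariant under coordinate sign changes; both arguments are valid. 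Your explicit absorption of the perturbation (applying the corollary to $h_c-\tfrac{\delta_c}{2}$ with $\varepsilon=\tfrac{\delta_c}{2}$) is in fact slightly more careful than the paper's wording, and the final padding by $\theta_c^{k-K_c}$, while harmless, is unnecessary since Corollary \ref{coro:compact.even} already yields the representation for every $k\ge K_c$, so one may simply take $k=\max_c K_c$.
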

\begin{proof}
Let $\varepsilon>0$. 
Similarly as in Step 1 of the proof of Theorem \ref{theo:complex.putinar.vasilescu.even}, $\tilde f=\tilde f_1+\dots+\tilde f_m$ is positive on the semialgebraic set $\tilde S$ defined as in \eqref{eq:set.S.tilde}.
For every $c\in[p]$, let $\tilde J_c:=J_c\cup (m+I_c)$.
Recall that $\tilde g_{m+j}:=x_j$, $j\in[n]$.
By applying \cite[Lemma 4]{grimm2007note}, there exist polynomials $s_c,q_{i,c}\in\R[\x(I_c)]$, for $j\in \tilde J_c$ and $c\in[p]$, such that 
\begin{equation}
\begin{array}{l}
     \tilde f = \sum_{c=1}^p(s_c+ \sum_{i\in \tilde J_c} q_{i,c}^2 \tilde g_i)\,,
\end{array}
\end{equation}
and for all $c\in[p]$, $s_c$ is positive on the set 
\begin{equation}
    \begin{array}{l} \{\x(I_c)\in\R^{n_c}\,:\,x_j\ge 0\,,\,j\in I_c\,,\,\tilde g_{i_c}(\x)=R_c-\sum_{j\in I_c}x_j\ge 0\}\,.
    \end{array}
\end{equation}
Set $h_c:=s_c(\x^2)+ \sum_{i\in \tilde J_c} q_{i,c}(\x^2)^2 \tilde g_i(\x^2)$, $c\in[p]$.
Let $d\in\N$ such that $2d-1\ge \max\{\deg(h_c)\,:\,c\in[p]\}$.
Then $f=\sum_{c=1}^p h_c$ with $h_c\in\R[\x(I_c)]_{2d}$ being even in each variable and  positive on the semialgebraic set
\begin{equation}
    S_c:=\{\x(I_c)\in\R^{n_c}\,:\,g_i(\x)\ge 0\,,\,i\in J_c\}\,.
\end{equation}
Note that $g_{i_c}:=R_c-\|\x(I_c)\|_2^2$ with $i_c\in J_c$.
By applying Corollary \ref{coro:compact.even}, 
there exists $k_c\in\N$ such that for all $K\ge k_c$, there exist  $\sigma_{0,c},\sigma_{i,c}\in\R[\x(I_c)]$, $i\in J_c$, such that $\sigma_{0,c},\sigma_{i,c}$ are SOS of monomials satisfying
    \begin{equation}\label{eq:degree.SOS.even.sparse.proof}
    \deg(\sigma_{0,c})\le 2(K+d)\quad\text{and}\quad\deg(\sigma_{i,c}g_i)\le 2(K+d)
\end{equation}
for all $i\in J_c$, and
\begin{equation}\label{eq:represent.even.sparse.proof}
\begin{array}{l}
     \theta_c^{K} h_c=\sigma_{0,c}+\sum_{i\in J_c}\sigma_{i,c}g_i\,.
\end{array}
\end{equation} 
Set $k=\max\{k^{(c)}\,:\,c\in[p]\}$.
Finally, we obtain the desired results.
\end{proof}
\begin{remark}
 In Theorem \ref{theo:complex.putinar.vasilescu.even.sparse}, it is not hard to see that $f$ has a rational SOS decomposition
\begin{equation}
    \begin{array}{rl}
         f=\sum_{c\in[p]} \frac{\sigma_{0,c}+\sum_{i\in J_c}\sigma_{i,c}g_i}{\theta_c^{k}}\,.
    \end{array}
\end{equation}
This decomposition is simpler than the ones provided in \cite{mai2022sparse} and thus is more applicable to polynomial optimization.

Another sparse representation without denominators can be found in the next theorem. However, the number of SOS of monomials is not fixed in this case. 
\end{remark}

\subsection{Sparse polynomial optimization on the nonnegative orthant}
\label{sec:sparse.POP.nonneg}

Consider the following POP:
\begin{equation}\label{eq:constrained.problem.poly.even.sparse.linear}
\begin{array}{l}
f^\star:=\inf\limits_{\x\in S} f(\x)\,,
\end{array}
\end{equation}
where $f\in\R[\x]$ and
\begin{equation}\label{eq:semial.set.def.2.even.sparse.linear}
    S=\{\x\in\R^n\,:\,x_j\ge 0\,,\,j\in[n]\,,\,g_i(\x)\ge 0\,,\,i\in[m]\}\,,
\end{equation}
for some $g_i\in\R[\x]$, $i\in[m]$, with $g_m=1$.
 Assume that $f^\star>-\infty$  and problem \eqref{eq:constrained.problem.poly.even.sparse.linear} has an optimal solution $\x^\star$. 
 
Then POP \eqref{eq:constrained.problem.poly.even.sparse.linear} is equivalent to
\begin{equation}\label{eq:equi.POP.sparse}
    f^\star:=\inf_{\x\in \check S}\check f\,,
\end{equation}
where
\begin{equation}
    \check S=\{\x\in\R^n\,:\,\check g_i(\x)\ge 0\,,\,i\in[m]\}\,,
\end{equation}
with optimal solution $\x^{\star2}$.

We will make the following assumptions:
\begin{assumption}\label{ass:sparsePOP}
With $p\in\N_{> 0}$, the first two conditions of Assumption \ref{ass:RIP} and the following conditions hold:
\begin{enumerate}
\if{
    \item There exits $(I_{c})_{c\in[p]}$ being a finite sequence of subsets of $[n]$  such that $\cup_{c\in[p]}I_c=[n]$ and 
    \begin{equation}\label{eq:RIP.POP}
    \begin{array}{l}
          \forall c\in\{2,\dots,p\}\,,\,\exists r_c\in[c-1]\,:\,I_c\cap (\cup_{t=1}^{c-1}I_t)\subset I_{r_c}\,.
    \end{array}
\end{equation}
Denote $n_c:=|I_c|$, for $c\in[p]$.
    \item There exits $(J_c)_{c\in[p]}$ being a finite sequence of subsets of $[m]$ such that $\cup_{c\in[p]}J_c=[m]$ and
    \begin{equation}
        \forall c\in[p]\,,\, m\in J_c\,,\,  \text{ and }g_i\in \R[\x(I_c)]\,,\, i\in J_c\,.
    \end{equation}
}\fi
    \item For every $c\in[p]$, there exist $ i_c\in J_c$ and $R_c>0$ such that 
    \begin{equation}
    \begin{array}{rl}
         g_{i_c}=R_c-\sum_{j\in I_c}x_j\,.
    \end{array}
    \end{equation}
    \item There exist  $f_c\in\R[\x(I_c)]$, for $c\in[p]$, such that $f=f_1+\dots+f_p$.
\end{enumerate}
\end{assumption}

\if{
The second condition of Assumption \ref{ass:sparsePOP} implies  that for every $c\in[p]$, $m\in J_c$,  $\check g_i\in \R[\x(I_c)]$ for all $i\in J_c$.
The third condition of Assumption \ref{ass:sparsePOP} implies that $\check g_{i_c}=R_c-\|\x(I_c)\|_2^2$.
Moreover, the final condition of Assumption \ref{ass:sparsePOP} yields
$\check f=\check f_1+\dots+\check f_p$ and for every $c\in[p]$, $\check f_c\in\R[\x(I_c)]$.

For every $\a=(\alpha_1,\dots,\alpha_n)\in\N^n$, denote
$\supp(\a):=
    \{i\in[n]\,:\,\alpha_i>0\}$.
For every $I\subset [n]$, denote $\N^{I}:=\{\a\in\N^n\,:\,\supp(\a)\subset I\}$. 
Let $\N^{I}_d:=\N^{I}\cap \N^n_d$.
For every $I\subset [n]$, with  $q = \sum_{\g\in \N^I} q_\g \x^\g  \in \R[\x(I)]$ and $\y=(y_\a)_{\a\in\N^n}\subset \N$, the localizing submatrix $\M_t(q\y,I)$ of degree $t$ associated to $\y$ and $q$ is the real symmetric matrix of the size $b(|I|,t):=\binom{|I|+t}{t}$ given by $\M_t(q\y,I) := (\sum_{\g\in\N^I}  {{q_\g }{y_{\g  + \a+\b }}})_{\a, \b\in \N^I_t}$.
Denote $d_{g_i}: =  {\deg ( {{g_i}} )} $, for $i\in[m]$.
}\fi

\subsubsection{Linear relaxations based on the extension of P\'olya's Positivstellensatz}
\label{sec:linear.CS}
Consider the hierarchy of linear programs indexed by $k,d\in\N$: 
\begin{equation}\label{eq:dual.problem.0.even.sparse.linear}
\begin{array}{rl}
   \tau _{k,d}^{\textup{\revise{SparseP\'olya}}}:= \inf\limits_{\y,\y^{(t)}} & L_\y(\check f)\\
   \st& \y= {(y_\a )_{\a  \in \N^n_{2d}}} \subset \R\,,\,\y^{(c)} = {(y_\a^{(c)} )_{\a  \in \N^n_{2( {d + k})}}} \subset \R\,,\,c\in[p]\,,\\
   & \diag(\M_d(\y,I_c))=\diag(\M_d(\theta_c^k \y^{(c)},I_c))\,,\,c\in[p]\,,\\
   &\diag(\M_{k^{(d)}_i}(\check g_i \y^{(c)},I_c))\in \R_+^{b(n_c,k_i^{(d)})}\,,\,i\in[m]\,,\,c\in[p]\,,\,y_\mathbf{0}=1\,,
\end{array}
\end{equation}
where $k_i^{(d)}:=k+d-d_{g_i}$.
\begin{theorem}\label{theo:constr.theo.0.even.sparse.linear}
Let $f,g_i\in\R[\x]$, $i\in[m]$, with $g_m=1$.
Consider POP \eqref{eq:constrained.problem.poly.even.sparse.linear} with $S$ being defined as in \eqref{eq:semial.set.def.2.even.sparse.linear}.
Let Assumption \ref{ass:sparsePOP} hold.
The dual of SDP \eqref{eq:dual.problem.0.even.sparse.linear} reads as:
\begin{equation}\label{eq:primal.problem.0.even.sparse.linear}
\begin{array}{rl}
   \rho _{k,d}^{\textup{\revise{SparseP\'olya}}}:= \sup\limits_{\lambda,\mathbf{u}_c,\mathbf{w}_{i}^{(c)}} & \lambda\\
   \st& \lambda\in\R\,,\,\mathbf{u}_c\in\R^{b(n_c,d)}\,,\,\mathbf{w}_{i}^{(c)}\in \R_+^{b(n_c,k_i^{(d)})}\,,\,i\in J_c\,,\,c\in[p]\,,\\
    &\check f-\lambda=\sum_{c\in[p]}h_c\,,\,h_c=\v_{\N^{I_c}_d}^\top\diag(\mathbf{u}_c)\v_{\N^{I_c}_d}\,,\,c\in[p]\,,\\
   &\theta_c^kh_c=\sum_{i\in J_c} \check g_i\v_{\N^{I_c}_{k^{(d)}_i}}^\top \diag(\mathbf{w}_{i}^{(c)}) \v_{\N^{I_c}_{k^{(d)}_i}}\,,\,c\in[p]\,.
\end{array}
\end{equation}
The following statements hold:
\begin{enumerate}
\item For all $k\in\N$ and for every $s\in\N_{> 0}$,
$\rho_{k-1,d}^{\textup{\revise{SparseP\'olya}}}\le\rho_{k,d}^{\textup{\revise{SparseP\'olya}}}\le\rho_{k,d+1}^{\textup{\revise{SparseP\'olya}}}\le f^\star$.
\item  One has
\begin{equation}\label{eq:sup.fstar}
    \sup\{\rho_{k,d}^{\textup{\revise{SparseP\'olya}}}\,:\,(k,d)\in\N^2\}=f^\star\,.
\end{equation}
\end{enumerate}
\end{theorem}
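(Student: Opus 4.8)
The plan is to prove the two inequalities $\sup\{\rho_{k,d}^{\textup{spP\'ol}}\}\le f^\star$ and $\sup\{\rho_{k,d}^{\textup{spP\'ol}}\}\ge f^\star$ separately. The first is immediate: the first statement of the theorem already gives $\rho_{k,d}^{\textup{spP\'ol}}\le f^\star$ for every $(k,d)\in\N^2$, so taking the supremum yields $\sup\{\rho_{k,d}^{\textup{spP\'ol}}\}\le f^\star$. The substance of the argument is the reverse inequality, for which the key tool is the sparse representation theorem (Theorem \ref{theo:complex.putinar.vasilescu.even.sparse}) applied to the equivalent even-symmetric POP \eqref{eq:equi.POP.sparse}.

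For the lower bound I would fix $\varepsilon>0$. Since $f^\star=\inf_{\x\in\check S}\check f$, the polynomial $\check f-f^\star+\varepsilon$ satisfies $\check f-f^\star+\varepsilon\ge\varepsilon>0$ on $\check S$, hence is positive on $\check S$. Using the decomposition $\check f=\check f_1+\dots+\check f_p$ coming from Assumption \ref{ass:sparsePOP} (each $\check f_c\in\R[\x(I_c)]$ being even in each variable), I would absorb the constant into one clique, e.g. set $\tilde f_1:=\check f_1+\varepsilon-f^\star$ and $\tilde f_c:=\check f_c$ for $c\ge 2$, so that $\check f-f^\star+\varepsilon=\sum_c\tilde f_c$ with each $\tilde f_c\in\R[\x(I_c)]$ even in each variable. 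Because $\check g_{i_c}=R_c-\|\x(I_c)\|_2^2$ is the square of the simplex constraint $g_{i_c}=R_c-\sum_{j\in I_c}x_j$, Assumption \ref{ass:RIP} holds for the $\check{}$-problem, and Theorem \ref{theo:complex.putinar.vasilescu.even.sparse} applies: there exist $d,k\in\N$, polynomials $h_c\in\R[\x(I_c)]$ even in each variable of degree at most $2d$, and SOS of monomials $\sigma_{0,c},\sigma_{i,c}$ with $\deg(\sigma_{0,c})\le 2(k+d)$ and $\deg(\sigma_{i,c}\check g_i)\le 2(k+d)$ such that $\check f-f^\star+\varepsilon=\sum_c h_c$ and $\theta_c^k h_c=\sigma_{0,c}+\sum_{i\in J_c}\sigma_{i,c}\check g_i$ for every $c\in[p]$.

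It remains to read this certificate as a feasible point of the dual SDP \eqref{eq:primal.problem.0.even.sparse.linear} with objective value $\lambda=f^\star-\varepsilon$. Each $h_c$, being even in each variable and supported on $\N^{I_c}_d$, equals $\v_{\N^{I_c}_d}^\top\diag(\mathbf{u}_c)\v_{\N^{I_c}_d}$ for a (not necessarily nonnegative) coefficient vector $\mathbf{u}_c\in\R^{b(n_c,d)}$; and with $m\in J_c$ and $\check g_m=1$ I would absorb $\sigma_{0,c}$ into the $i=m$ summand, each SOS of monomials then supplying a nonnegative weight vector $\mathbf{w}_i^{(c)}\in\R_+^{b(n_c,k_i^{(d)})}$. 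The degree bounds above are exactly what guarantees $\deg(\sigma_{i,c})\le 2k_i^{(d)}$, so the index sets match and the constraint $\check f-\lambda=\sum_c h_c$ holds with $\lambda=f^\star-\varepsilon$. This shows $\rho_{k,d}^{\textup{spP\'ol}}\ge f^\star-\varepsilon$ for these $(k,d)$, hence $\sup\{\rho_{k,d}^{\textup{spP\'ol}}\}\ge f^\star-\varepsilon$; letting $\varepsilon\downarrow 0$ gives $\sup\{\rho_{k,d}^{\textup{spP\'ol}}\}\ge f^\star$, and combined with the upper bound this proves \eqref{eq:sup.fstar}. The main obstacle I anticipate is the precise degree bookkeeping, namely verifying that the degree bounds furnished by Theorem \ref{theo:complex.putinar.vasilescu.even.sparse} translate exactly into the admissible index ranges $\N^{I_c}_d$ and $\N^{I_c}_{k_i^{(d)}}$ appearing in \eqref{eq:primal.problem.0.even.sparse.linear}, together with the clean absorption of $\sigma_{0,c}$ into the $\check g_m=1$ slot; modulo this, the argument is a direct transcription of a positivity certificate into SDP feasibility, exactly as in the dense case (Theorem \ref{theo:constr.theo.0.even.LP}).
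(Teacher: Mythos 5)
Your proposal is correct and follows essentially the same route as the paper: both prove the second statement by applying the sparse representation theorem (Theorem \ref{theo:complex.putinar.vasilescu.even.sparse}) to $\check f-(f^\star-\varepsilon)$, which is positive on $\check S$, and then reading the resulting clique-wise decomposition $\check f-(f^\star-\varepsilon)=\sum_c h_c$ with $\theta_c^k h_c=\sigma_{0,c}+\sum_{i\in J_c}\sigma_{i,c}\check g_i$ as a feasible point of \eqref{eq:primal.problem.0.even.sparse.linear} with $\lambda=f^\star-\varepsilon$, before letting $\varepsilon\downarrow 0$. The extra bookkeeping you flag (absorbing $\sigma_{0,c}$ into the $\check g_m=1$ slot and matching degree bounds to the index sets $\N^{I_c}_{k_i^{(d)}}$) is handled implicitly in the paper in exactly the way you describe.
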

\begin{proof}
It is fairly easy to see that the first statement holds.
Let us prove the second one.
Let $\check g_{i_c}:=R_c-\|\x(I_c)\|_2^2$ and 
 $\varepsilon>0$.
Then $\check f-(f^\star-\varepsilon)>0$ on $S$.
By applying Theorem \ref{theo:complex.putinar.vasilescu.even.sparse}, 
there exist $d,k\in\N$, $h_c\in\R[\x(I_c)]$, $\sigma_{0,c},\sigma_{j,c}\in\R[\x(I_c)]$, for $j\in J_c$ and $c\in[p]$, such that the following conditions hold:
\begin{enumerate}
    \item The equality $\check f-(f^\star-\varepsilon)=h_1+\dots+h_p$ holds and $h_c$ is a polynomial of degree at most $2d$ which is even in each variable.
    \item For all $i\in J_c$ and  $c\in[p]$, $\sigma_{0,c},\sigma_{i,c}$ are SOS of monomials satisfying
    \begin{equation}\label{eq:degree.SOS.even.sparse.proof2}
    \deg(\sigma_{0,c})\le 2(k+d)\quad\text{and}\quad\deg(\sigma_{i,c}\check g_i)\le 2(k+d)
\end{equation}
 and 
\begin{equation}\label{eq:represent.even.sparse.proof2}
\begin{array}{l}
     \theta_c^{k}h_c=\sigma_{0,c}+\sum_{i\in J_c}\sigma_{i,c}\check g_i\,.
\end{array}
\end{equation}
\end{enumerate}
It implies that there exists $\mathbf{u}_c\in\R^{b(n_c,d)}$, $\mathbf{w}_{i}^{(c)}\in \R_+^{b(n_c,k_i^{(d)})}$ such that
\begin{equation}
    h_c=\v_{\N^{I_c}_d}^\top\diag(\mathbf{u}_c)\v_{\N^{I_c}_d}
\quad\text{and}\quad
    \sigma_{i,c}:=\v_{\N^{I_c}_{k^{(d)}_i}}^\top \diag(\mathbf{w}_{i}^{(c)}) \v_{\N^{I_c}_{k^{(d)}_i}}\,,
\end{equation}
for $i\in J_c$ and $c\in[p]$.
It implies that $(f^\star-\varepsilon,\mathbf{u}_c,\mathbf{w}_{i}^{(c)})$ is an optimal solution of LP \eqref{eq:primal.problem.0.even.sparse.linear}.
Thus $\rho _{k,d}^{\textup{\revise{SparseP\'olya}}}\ge f^\star -\varepsilon$, yielding \eqref{eq:sup.fstar}.
\end{proof}

\subsubsection{Semidefinite relaxations based on the extension of P\'olya's Positivstellensatz}
\label{sec:interrup.CS}

For every $I\subset [n]$, we write $\N^{I}=\{\a_{1}^{(I)},\a_{2}^{(I)},\dots,\a_{r}^{(I)},\a_{r+1}^{(I)},\dots\}$ such that 
\begin{equation}
    \a_{1}^{(I)}< \a_{2}^{(I)}< \dots< \a_{r}^{(I)}<\a_{r+1}^{(I)}<\dots\,.
\end{equation}
Let 
\begin{equation}
    W_{j}^{(I)}:=\{i\in\N\,:\,i\ge j\,,\,\a_i^{(I)}+\a_j^{(I)}\in2\N^{I}\}\,,\quad j\in\N_{> 0}\,,\,I\subset [n]\,.
\end{equation}
Then for all $j\in\N_{> 0}$ and for all $I\subset [n]$, $W_j^{(I)}\ne \emptyset$ since $j\in W_j^{(I)}$.
For every $j\in\N$ and for every $I\subset [n]$, we write $W_j^{(I)}:=\{i^{(j)}_{1,I},i^{(j)}_{2,I},\dots\}$ such that $i^{(j)}_{1,I}<i^{(j)}_{2,I}<\dots$.
Let 
\begin{equation}
    \mathcal T_{j,I}^{(s,d)}=\{\a_{i^{(j)}_{1,I}}^{(I)},\dots,\a_{i^{(j)}_{s,I}}^{(I)}\}\cap \N^{I}_d\,,\quad I\subset[n]\,,\, j,s\in\N_{> 0}\,,\,d\in\N\,.
\end{equation}
For every $s\in \N_{> 0}$, for every $d\in\N$ and for every $I\subset[n]$, define $\mathcal A^{(s,d)}_{1,I}:=\mathcal T_{1,I}^{(s,d)}$ and for $j=2,\dots,b(|I|,d)$, define
\begin{equation}
    \mathcal A^{(s,d)}_{j,I}:=\begin{cases}
         \mathcal {\cal T}_{j,I}^{(s,d)}&\text{if }T_{j,I}^{(s,d)}\backslash \mathcal A^{(s,d)}_{l,I}\ne \emptyset\,,\,\forall l\in[j-1]\,,\\
         \emptyset & \text{otherwise}\,.
         \end{cases}
\end{equation}
Note that $\cup_{j=1}^{b(|I|,d)} \mathcal A^{(s,d)}_{j,I} = \N^{I}_d$ and $| \mathcal A^{(s,d)}_{j,I}|\le s$.
Then the sequence
\begin{equation}
    (\a+\b)_{\big(\a,\b\in\mathcal A^{(s,d)}_{j,I}\big)}\,,\,j\in[b(|I|,d)]
\end{equation}
are overlapping blocks of size at most $s$ in 
$(\a+\b)_{(\a,\b\in\N^{I}_d)}$.
\begin{example}
Consider the case of $n=d=s=2$, $I_1=\{1\}$ and $I_2=\{2\}$.
Matrix $(\a+\b)_{(\a,\b\in\N^2_2)}$ is written explicitly as in \eqref{eq:mat.exam}.
We obtain two blocks:
\begin{equation}
    (\a+\b)_{(\a,\b\in\N^{I_1}_2)}=\begin{bmatrix}
{\bf(0,0)} & (1,0) & {\bf(2,0)}  \\
(1,0) & {\bf(2,0)} & (3,0)  \\
{\bf(2,0)} & (3,0) & {\bf(4,0)} \\
\end{bmatrix}
\end{equation}
and 
\begin{equation}
    (\a+\b)_{(\a,\b\in\N^{I_2}_2)}=\begin{bmatrix}
{\bf(0,0)}  & (0,1)  & {\bf(0,2)}\\
(0,1)  & {\bf(0,2)}  & (0,3)\\
{\bf(0,2)} & (0,3) & {\bf(0,4)}
\end{bmatrix}
\end{equation}
Then  $\mathcal A^{(2,2)}_{1,I_1}=\{{(0,0)}, {(2,0)}\}$, $\mathcal A^{(2,2)}_{2,I_1}=\{{(1,0)}\}$, $\mathcal A^{(2,2)}_{3,I_1}=\emptyset$ and  $\mathcal A^{(2,I_2)}_{1,2}=\{{(0,0)}, (0,2)\}$, $\mathcal A^{(2,2)}_{2,I_2}=\{{(0,1)}\}$, $\mathcal A^{(3,2)}_{2,I_2}=\emptyset$.
\end{example}

For every $I\subset [n]$, with $\mathcal{B}=\{\b_1,\dots,\b_r\}\subset \N^I$ such that $\b_1<\dots<\b_r$, for every $h=\sum_{\g\in\N^I}h_\g \x^\g\in\R[\x(I)]$ and  $\y=(y_\a)_{\a\in\N^n}\subset \R$, denote $\M_{\mathcal{B}}(h\y,I):=(\sum_{\g\in\N^I}h_\g y_{\g+\b_i+\b_j})_{i,j\in[r]}$.

Consider the hierarchy of linear programs indexed by $k,d\in\N$ and $s\in\N_{> 0}$: 
\begin{equation}\label{eq:dual.problem.0.even.sparse}
\begin{array}{rl}
   \tau _{k,d,s}^\textup{\revise{SparseP\'olya}}:= \inf\limits_{\y,\y^{(c)}} & L_\y(\check f)\\
   \st& \y= {(y_\a )_{\a  \in \N^n_{2d}}} \subset \R\,,\,\y^{(c)} = {(y_\a^{(c)} )_{\a  \in \N^n_{2( {d + k})}}} \subset \R\,,\,c\in[p]\,,\\
   & \diag(\M_d(\y,I_c))=\diag(\M_d(\theta_c^k \y^{(c)},I_c))\,,\,c\in[p]\,,\,y_\mathbf{0}=1\,,\\
   &\M_{\cA^{(s,k_i^{(d)})}_{j,I_c}}(\check g_i \y^{(c)},I_c)\succeq 0\,,\, j\in [b(n_c,k_i^{(d)})]\,,\,i\in J_c\,,\,c\in[p]\,,
\end{array}
\end{equation}
where $k_i^{(d)}:=k+d-d_{g_i}$.
\begin{theorem}\label{theo:constr.theo.0.even.sparse}
Let $f,g_i\in\R[\x]$, $i\in[m]$, with $g_m=1$.
Consider POP \eqref{eq:constrained.problem.poly.even.sparse.linear} with $S$ being defined as in \eqref{eq:semial.set.def.2.even.sparse.linear}.
Let Assumption \ref{ass:sparsePOP} hold.
The dual of SDP \eqref{eq:dual.problem.0.even.sparse} reads as: 
\begin{equation}\label{eq:primal.problem.0.even.sparse}
\begin{array}{rl}
   {\rho _{k,d,s}^\textup{\revise{SparseP\'olya}}}:= \sup\limits_{\lambda,\mathbf{u}_c,\mathbf{G}_{i,j}^{(c)}} & \lambda\\
   \st& \lambda\in\R\,,\,\mathbf{u}_c\in\R^{b(n_c,d)}\,,\,\mathbf{G}_{i,j}^{(c)}\succeq 0\,,\,j\in[b(n_c,k_i^{(d)})]\,,\,i\in J_c\,,\,c\in[p]\,,\\[5pt]
    &\check f-\lambda=\sum_{c\in[p]}h_c\,,\,h_c=\v_{\N^{I_c}_d}^\top\diag(\mathbf{u}_c)\v_{\N^{I_c}_d}\,,\,c\in[p]\,,\\
   &\theta_c^kh_c=\sum_{i\in J_c} \check g_i \Big(\sum_{j\in[b(n_c,k_i^{(d)})]}\v_{\cA^{(s,k_i^{(d)})}_{j,I_c}}^\top \mathbf{G}_{i,j}^{(c)} \v_{\cA^{(s,k_i^{(d)})}_{j,I_c}}\Big)\,,\,c\in[p]\,.
\end{array}
\end{equation}
The following statements hold:
\begin{enumerate}
\item For all $k,d\in\N$ and for every $s\in\N_{> 0}$,
$\rho_{k,d}^\textup{\revise{SparseP\'olya}}=\rho_{k,d,1}^\textup{\revise{SparseP\'olya}}\le\rho_{k,d,s}^\textup{\revise{SparseP\'olya}}\if{\le\rho_{k,d+1,s}^{\revise{SparseP\'olya}}}\fi\le f^\star$.
\item For every $s\in\N_{> 0}$, 
$\sup\{\rho_{k,d,s}^\textup{\revise{SparseP\'olya}}\,:\,(k,d)\in\N^2\}=f^\star$.
\end{enumerate}
\end{theorem}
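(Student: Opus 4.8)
The plan is to mirror the proof of Theorem \ref{theo:constr.theo.0.even} for the dense case and reduce everything to the sparse linear hierarchy of Theorem \ref{theo:constr.theo.0.even.sparse.linear}. First I would verify the primal--dual correspondence between \eqref{eq:dual.problem.0.even.sparse} and \eqref{eq:primal.problem.0.even.sparse} by forming the Lagrangian of the moment program: the coupling equalities $\diag(\M_d(\y,I_c))=\diag(\M_d(\theta_c^k\y^{(c)},I_c))$ contribute the multipliers $\mathbf{u}_c$ (hence the pieces $h_c=\v_{\N^{I_c}_d}^\top\diag(\mathbf{u}_c)\v_{\N^{I_c}_d}$), whereas each constraint $\M_{\cA^{(s,k_i^{(d)})}_{j,I_c}}(\check g_i\y^{(c)},I_c)\succeq 0$ contributes a dual matrix $\mathbf{G}_{i,j}^{(c)}\succeq 0$ localized on the block $\cA^{(s,k_i^{(d)})}_{j,I_c}$; matching coefficients monomial by monomial reproduces exactly the two families of equalities in \eqref{eq:primal.problem.0.even.sparse}.

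For the chain $\rho_{k,d}^\textup{spP\'ol}=\rho_{k,d,1}^\textup{spP\'ol}\le\rho_{k,d,s}^\textup{spP\'ol}\le f^\star$ of the first statement I would proceed as follows. When $s=1$ every set $\cA^{(1,k_i^{(d)})}_{j,I_c}$ is a singleton, so each $\mathbf{G}_{i,j}^{(c)}$ is a $1\times 1$ nonnegative scalar and $\sum_j\v_{\cA}^\top\mathbf{G}_{i,j}^{(c)}\v_{\cA}$ collapses to a diagonal form $\v_{\N^{I_c}_{k_i^{(d)}}}^\top\diag(\mathbf{w}_i^{(c)})\v_{\N^{I_c}_{k_i^{(d)}}}$ with $\mathbf{w}_i^{(c)}\in\R_+^{b(n_c,k_i^{(d)})}$, which is precisely \eqref{eq:primal.problem.0.even.sparse.linear}; thus $\rho_{k,d,1}^\textup{spP\'ol}=\rho_{k,d}^\textup{spP\'ol}$. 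Monotonicity $\rho_{k,d,1}^\textup{spP\'ol}\le\rho_{k,d,s}^\textup{spP\'ol}$ holds because any diagonal ($s=1$) certificate is a feasible $s$-certificate: a single square $u(\x^\a)^2$ has a $1\times1$ Gram matrix, positive semidefinite of factor width at most $s$, and the covering property $\cup_j\cA^{(s,d)}_{j,I}=\N^{I}_d$ lets one house each such square inside one of the larger blocks. The upper bound is weak duality: for any feasible $(\lambda,\mathbf{u}_c,\mathbf{G}_{i,j}^{(c)})$ each $h_c$ obeys $\theta_c^k h_c=\sum_{i\in J_c}\check g_i(\sum_j\v_{\cA}^\top\mathbf{G}_{i,j}^{(c)}\v_{\cA})\ge 0$ on $\check S$, since $\check g_i\ge 0$ there, the inner sums are sums of squares, and $\theta_c^k>0$; hence $\check f-\lambda=\sum_c h_c\ge 0$ on $\check S$, and evaluating at the minimizer $\x^{\star2}$ gives $\lambda\le\check f(\x^{\star2})=f^\star$.

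The second statement then follows by sandwiching. The first statement already yields $\rho_{k,d,s}^\textup{spP\'ol}\le f^\star$ for all $(k,d)$ and $\rho_{k,d,s}^\textup{spP\'ol}\ge\rho_{k,d,1}^\textup{spP\'ol}=\rho_{k,d}^\textup{spP\'ol}$; since Theorem \ref{theo:constr.theo.0.even.sparse.linear} asserts $\sup\{\rho_{k,d}^\textup{spP\'ol}:(k,d)\in\N^2\}=f^\star$ (which in turn rests on the sparse representation of Theorem \ref{theo:complex.putinar.vasilescu.even.sparse} under Assumption \ref{ass:sparsePOP}), taking the supremum over $(k,d)$ gives $f^\star=\sup_{k,d}\rho_{k,d}^\textup{spP\'ol}\le\sup_{k,d}\rho_{k,d,s}^\textup{spP\'ol}\le f^\star$, forcing equality.

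The hard part will not be the convergence, which is inherited wholesale from the linear hierarchy, but the bookkeeping in the duality step: I must check that restricting the localizing matrices to the overlapping blocks $\cA^{(s,k_i^{(d)})}_{j,I_c}$ while imposing only the diagonal coupling $\diag(\M_d(\y,I_c))=\diag(\M_d(\theta_c^k\y^{(c)},I_c))$ still dualizes into an exact weighted sum-of-squares identity over $\check S$ rather than a mere inequality, so that no duality gap is silently introduced and the embedding of $s=1$ solutions into the $s$-problem is genuinely coefficient-preserving.
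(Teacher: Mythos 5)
Your proposal is correct and follows essentially the same route as the paper: the paper likewise disposes of the first statement by noting that $s=1$ collapses the blocks to the diagonal (linear) relaxation and that larger $s$ only enlarges the feasible set while weak duality gives the bound $f^\star$, and it obtains the second statement by exactly your sandwich $\rho_{k,d}^{\textup{spP\'ol}}\le\rho_{k,d,s}^{\textup{spP\'ol}}\le f^\star$ combined with the convergence of the sparse linear hierarchy in Theorem \ref{theo:constr.theo.0.even.sparse.linear}. Your write-up simply makes explicit the duality bookkeeping that the paper leaves to the reader.
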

\begin{proof}
It is not hard to prove the first statement. The second one is due to the second statement of Theorem \ref{theo:constr.theo.0.even.sparse.linear} and the inequalities
$\rho_{k,d}^{\textup{\revise{SparseP\'olya}}}\le\rho_{k,d,s}^\textup{\revise{SparseP\'olya}}\le f^\star$.
\end{proof}

\subsubsection{Obtaining an optimal solution}
In other to extract an optimal solution of POP \eqref{eq:constrained.problem.poly.even.sparse.linear} with correlative sparsity, we first extract atoms on each clique similarly to Algorithm \ref{alg:extract.sol} and then connect them together to obtain atoms in $\R^n$.
Explicitly, we  use  heuristic  extraction  Algorithm \ref{alg:extract.sol.sparse}. 
\begin{algorithm}
\caption{Extraction  algorithm for sparse POPs on the nonnegative orthant}
\label{alg:extract.sol.sparse}
\textbf{Input:} precision parameter $\varepsilon>0$ and an optimal solution $(\lambda,\mathbf{u}_c,\mathbf{G}_{i,j}^{(c)})$ of SDP \eqref{eq:primal.problem.0.even.sparse}.\\
\textbf{Output:} an optimal solution $\x^\star$ of POP \eqref{eq:constrained.problem.poly.even.sparse.linear}.
\begin{algorithmic}[1]
    \State For $c\in[p]$, do:
    \begin{algsubstates}
    \State For $j\in[b(n_c,k_m^{(d)})]$, let $\bar{\mathbf{G}}_{j}^{(c)}=(w_{\mathbf p\mathbf q}^{(c,j)})_{\mathbf p,\mathbf q\in \N^{I_c}_{k_m^{(d)}}}$ such that $(w_{\mathbf p\mathbf q}^{(c,j)})_{\mathbf p,\mathbf q\in \cA^{(s,k_m^{(d)})}_{j,I_c}}=\mathbf{G}_{m,j}^{(c)}$ and $w_{\mathbf p\mathbf q}^{(c,j)}=0$ if $(\mathbf p,\mathbf q)\notin (\cA^{(s,k_m^{(d)})}_{j,I_c})^2$.
    Then $\bar{\mathbf{G}}_{j}^{(c)}\succeq 0$ and
    \begin{equation}
    \begin{array}{r}
         \v_{\N^{I_c}_{k_m^{(d)}}}^\top \bar{\mathbf{G}}_{j}^{(c)} \v_{\N^{I_c}_{k_m^{(d)}}}=\v_{\mathcal A^{(s,k_m^{(d)})}_{j,I_c}}^\top \mathbf{G}_{m,j}^{(c)} \v_{\mathcal A^{(s,k_m^{(d)})}_{j,I_c}}\,;
    \end{array}
    \end{equation}
    \State Let  $\mathbf{G}^{(c)}:=\sum_{j\in[b(n_c,k_m^{(d)})]}\bar{\mathbf{G}}_j^{(c)}$. Then $\mathbf{G}^{(c)}$ is the Gram matrix corresponding to $\sigma_{m,c}$ in the rational SOS decomposition
    \begin{equation}
    \begin{array}{rl}
         \check f-\lambda=\sum_{c\in[p]} \frac{\sum_{i\in J_c}\sigma_{i,c}\check g_i}{\theta_c^{k}}\,.
    \end{array}
\end{equation}
    where each $\sigma_{i,c}$ is an SOS polynomial and $ \check g_m=1$;
    \State Obtain an atom $ \z^{\star (c)}\in\R^{n_c}$ by using the extraction algorithm of Henrion and Lasserre in  \cite{henrion2005detecting}, where the matrix $\mathbf V$ in \cite[(6)]{henrion2005detecting} is taken such that the columns of $\mathbf V$ form a basis of the null space $\{ \mathbf{u}\in\R^{\omega_k}\,:\, {\mathbf{G}^{(c)}}\mathbf  u=0\}$;   
\end{algsubstates}
\State Let $\z^\star\in\R^n$ such that $\z^\star(I_c)=\z^{\star (c)}$, for $c\in[p]$.
    \State If $ \z^\star$ exists, verify that $ \z^\star$ is  an approximate optimal solution of POP \eqref{eq:equi.POP.sparse} by checking the following inequalities:
    \begin{equation}\label{eq:check.sol.sparse}
        |\check f( \z^\star)-\lambda|\le \varepsilon \|\check f\|_{\max}\text{ and }\check g_i( \z^\star)\ge -\varepsilon \|\check g_i\|_{\max}\,,\, i\in [m]\,,
    \end{equation}
where   $\|q\|_{\max}:=\max_\a |q_\a|$ for any $q\in\R[\x]$.
\State If the inequalities \eqref{eq:check.sol.sparse} hold, set $\x^\star:=\z^{\star 2}$.
\end{algorithmic}
\end{algorithm}

\bibliographystyle{abbrv}

    
\end{document}